\newif\ifsolutions
\title{Asymptotics for Bailey-type mock theta functions}
\author{Taylor Garnowski}
\DeclarePairedDelimiter\floor{\lfloor}{\rfloor}
\newtheorem{Theorem}{Theorem}
\newtheorem{Lemma}[Theorem]{Lemma}
\newtheorem{Example}[Theorem]{Example}
\newtheorem{Corollary}[Theorem]{Corollary}
\newtheorem{Remark}[Theorem]{Remark}
\newtheorem{Conjecture}{\bf Conjecture}
\newtheorem{Proposition}[Theorem]{Proposition}
\newtheorem{Definition}[Theorem]{Definition}
\theoremstyle{definition}
\date{Received: date / Accepted: date}
\begin{document}

\date{}
\maketitle

\begin{abstract}
   We compute asymptotic estimates for the Fourier coefficients of two mock theta functions, which come from Bailey pairs derived by Lovejoy and Osburn. To do so, we employ the circle method due to Wright and a modified Tauberian theorem. We encounter cancellation in our estimates for one of the mock theta functions due to the auxiliary function $\theta_{n,p}$ arising from the splitting of Hickerson and Mortenson. We deal with this by using higher order asymptotic expansions for the Jacobi theta functions. 
 \end{abstract}

\section{Introduction}
\subsection{History}
\hspace{5mm} We recall the defintion of a classical mock theta function. Let $q$ be a complex variable with $|q|<1$. A classical mock theta function $M(q)$ is a function for which near each root of unity $\xi$ there exists a weakly holomorphic modular form, $F_{\xi}$, and a rational number $a_\xi$, such that near $\xi$
\begin{align}\label{mockdef}
    M(q) - q^{\alpha_\xi}F_{\xi}(q) = O(1).
\end{align}
We then eliminate the possibility of having holomorphic theta functions from the definition by declaring that no $F_{\xi}$ satisfies the above condition for all roots of unity. A nice list of the classical mock theta functions exists in the appendix of \cite{Bringmannbook} and Section 4 of \cite{splittingmort}. Large families of new examples of modular type functions that satisfy Eq. \eqref{mockdef} were discovered after S. Zwegers wrote his thesis \cite{Zwegers} on mock theta functions in 2002, whereby the classical mock theta functions were found to be linked to harmonic Maass forms. As a result, functions that are finite sums of normalized Appell sums can be viewed as mock theta functions. This result brings the theory of mock theta functions and combinatorial generating functions closer together. For example, let $\zeta := e^{2\pi i z}$, then the famous partition rank generating function,
\begin{align*}
    R(z;\tau):=\sum^\infty_{n=0}\sum_{m\in \mathbb{Z}} N(m,n)\zeta^m q^n:= \sum_{n\geq 0}\frac{q^{n^2}}{(\zeta q,\;\zeta^{-1}q;q)_n},
\end{align*}
can be written as a sum of normalized Appell sums and is thus a mock theta function when $z\in \mathbb{H}$ (Lemma 3.1 in \cite{DousseMertensAsymp}). Understanding how the coefficients of mock theta functions grow is important, especially when a combinatorial interpretation is available. For example (see Theorem 1.2 in \cite{DousseMertensAsymp}), 
\begin{align*}
    N(m,n) \sim \frac{\beta}{4}\textnormal{sech}\left(\frac{\beta m}{2}\right)p(n) =  \frac{\beta}{16\sqrt{3}n}\textnormal{sech}\left(\frac{\beta m}{2}\right)e^{\pi\sqrt{\frac{2n}{3}}},
\end{align*}
where $p(n)$ is the partition function, $\beta:= \frac{\sqrt{n}\textnormal{log}(n)}{\pi\sqrt{6}}$, and $f(n) \sim g(n)$ denotes that the ratio of $f(n)$ and $g(n)$ goes to $1$ as $n \to \infty$.
\subsection{Bailey pairs and Mock theta functions}
\hspace{5mm} The inspiration for this work comes from the fact that we want to find similar asymptotic estimates for mock theta functions that come from $\textit{Bailey pairs}$. Let $\alpha_n(q)=:\alpha_n $ and $\beta_n(q)=: \beta_n$ be two sequences of $q$-series. The tuple $(\alpha_n,\beta_n)$ is referred to as a Bailey pair with respect to $a\in \mathbb{C}$ (assuming $a$ causes no poles in what follows) if 
\begin{align}\label{baileyrep1}
    \beta_n = \sum^n_{k=0}\frac{\alpha_k}{(q)_{n-k}(aq)_{n+k}}.
\end{align}
 The fact that Bailey pairs and mock theta functions are related is not immediately obvious, and it wasn't until  Andrews  showed that Eq. \eqref{baileyrep1} can be iterated to obtain an infinite family of Bailey pairs that a true connection was found \cite{andrewsbaileychain1,andrewsbaileychain2}. This is the content of \textit{Bailey's lemma} \cite{andrewsbaileychain1,andrewsbaileychain2,bailey2,bailey1}. Bailey's lemma leads to  families of sums, known as \textit{higher level Appell sums}, which are not necessarily mock theta functions, but  mixed mock theta function \cite{Bringmannbook,lovejoybaileypairs}. Occasionally, certain pairs lead to  normal Appell sums via Bailey's lemma, and we call the resulting functions \textit{Bailey-type mock theta functions}.
\par The study of Bailey-type mock theta functions became more interesting with a key result by Hickerson and Mortenson  \cite{splittingmort}, which gave an explicit decomposition of indefinite theta functions in terms of Appell sums and theta functions. This result was used by many authors in works such as \cite{Gupaper,loveindefinite,lovejoybaileypairs} to write families of Bailey-type mock theta functions in terms of classical mock theta functions. For example, Lovejoy and Osburn in \cite{lovejoybaileypairs} derived a Bailey-type mock theta function, $R^{(4)}_1(q)$, and used the decomposition of \cite{splittingmort} to find the formula
\begin{align}\label{R41}
    R^{(4)}_1(q) = -\phi(q^4) + M_1(q),
\end{align}
where $\phi$ is the 10th order classical mock theta function given by
\begin{align*}
    \phi(q):= \sum^{\infty}_{n=0}\frac{q^{\frac{n(n+1)}{2}}}{(q;q^2)_{n+1}}
\end{align*}
and $M_1(q)$ is a weakly holomorphic modular form. Understanding how the coefficients of certain Bailey-type mock theta functions grow is an interesting question, which was proposed by Lovejoy and Osburn in \cite{lovejoybaileypairs}, and which we will begin to answer in this work. To the best of our knowledge, no works have investigated the growth of Bailey-type mock theta functions in depth. Doing so here for two example functions, we hope to lay the groundwork for future and more advanced studies of the asymptotic properties of Bailey-type mock theta functions. Let $a(n)$ denote the coefficients of $R^{(3)}_3$ and $b(n)$ the coefficients of $R^{(3)}_1$, which are two Bailey-type mock theta functions defined in Definition \ref{mainpairs} (the $a(n)$ and $b(n)$ are explicitly defined in Examples 1 and 2) . We will show the following. 
\begin{Theorem}\label{thebigboyforaandb}
The following estimates hold as $n \to \infty$\textnormal{:}
\begin{align*}
    a(n) &\sim  (-1)^n\frac{\sqrt{6}}{12\sqrt{n}}e^{\pi \sqrt{\frac{n}{12}}},\\
    b(n) &\sim \left(\frac{1}{2\textnormal{sin}\left(\frac{\pi}{4}\right)\textnormal{sin}\left(\frac{5\pi}{12}\right)} +1\right)\frac{e^{\pi\sqrt{\frac{n}{6}}}}{\sqrt{24n}}.
\end{align*}
\end{Theorem}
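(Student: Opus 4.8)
The plan is to treat $R^{(3)}_3$ and $R^{(3)}_1$ via the Wright circle method combined with the modified Tauberian theorem, exactly as advertised in the abstract. First I would take the explicit product/Appell-sum representations of these two Bailey-type mock theta functions from Definition \ref{mainpairs} and Examples 1 and 2, and apply the Hickerson--Mortenson splitting so that each function is written as a sum of a classical mock theta function (a normalized Appell sum $m(x,q,z)$) plus a modular piece built from Jacobi theta quotients $\frac{j(\cdots)}{j(\cdots)}$. The asymptotic main term then comes entirely from the dominant pole of the generating function at $q=1$ (and, for $a(n)$, effectively at $q=-1$, which is responsible for the $(-1)^n$), so I would first locate which root of unity gives the exponentially largest contribution and record the leading exponential rate: $e^{\pi\sqrt{n/12}}$ for $a(n)$ and $e^{\pi\sqrt{n/6}}$ for $b(n)$.

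Next I would carry out the bootstrap. Setting $q=e^{-t}$ with $t\to 0^+$, I would expand each constituent — the Appell sum $m(x,q,z)$ and the theta quotients — using the modular transformation laws for $\eta$, for the Jacobi theta function, and for Appell--Lerch sums (Zwegers). This gives an asymptotic of the shape $g(t)\sim C\, t^{\gamma} e^{A/t}$ near the dominant singularity, with subdominant contributions from the other singularities being exponentially smaller. I would then invoke the modified Tauberian theorem (Wright's version, as cited) to convert $g(e^{-t})\sim C t^{\gamma} e^{A/t}$ into $\big[\text{coeff of }q^n\big]\sim C' n^{\kappa} e^{2\sqrt{An}}$, reading off the constants $\tfrac{\sqrt6}{12}$, $\tfrac{1}{\sqrt{24}}$ and the power $n^{-1/2}$ in each case; for $a(n)$ I would track the root of unity $-1$ carefully so that the oscillating factor $(-1)^n$ emerges with the correct phase.

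The main obstacle — and the reason the paper singles it out — will be the \emph{cancellation} in the estimate for $a(n)$ coming from the auxiliary function $\theta_{n,p}$ in the Hickerson--Mortenson decomposition: at the relevant root of unity the naive leading terms of the theta pieces cancel, so the genuine main term is hidden in the next order. To handle this I would not stop at the first term of the modular expansion of the Jacobi theta functions but compute \emph{higher-order asymptotic expansions} (i.e. the full Laurent/asymptotic series in $t$ after modular inversion, keeping the exponentially-suppressed-but-polynomially-relevant corrections), add the contributions of the several terms in $\theta_{n,p}$, and verify that after the cancellation of the would-be dominant pieces the surviving term still has exponential order $e^{\pi\sqrt{n/12}}$ with the stated constant. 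The remaining steps — bounding the minor arc / non-dominant roots of unity, and checking that the error terms are $o$ of the main term so that the Tauberian theorem applies — are routine given the structure above, so I would organize the write-up as: (i) explicit representations and splitting, (ii) asymptotics of each building block near each root of unity including the higher-order terms where needed, (iii) identification and cancellation analysis at the dominant singularity, (iv) application of the Tauberian theorem to extract $a(n)$ and $b(n)$.
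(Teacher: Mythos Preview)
Your plan matches the paper's approach almost exactly: Hickerson--Mortenson splitting, higher-order theta expansions to survive the cancellation in $\theta_{1,4}$ for $a(n)$, and minor-arc bounds. One clarification worth making explicit: the paper does \emph{not} use the Tauberian theorem for $a(n)$ --- the sign alternation of $a(n)$ violates the non-negativity/monotonicity hypotheses of Ingham's theorem, so the paper runs the full Wright circle method (major arc at $\tau=\tfrac12$, explicit Bessel-type integral, minor-arc bounds via Lemmas on $\vartheta$ near generic cusps) for that case; the Tauberian theorem is reserved for $b(n)$, where you must first check (as the paper does) that the coefficients of $\nu(-q)$ are non-negative and weakly increasing. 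Also note that for $a(n)$ the pole at $q=1$ is \emph{not} dominant --- $T(\tau)$ actually decays there --- so the only major arc is at $q=-1$.
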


\par The following table shows the ratio between the estimated values in Theorem \ref{thebigboyforaandb} and the actual values for some values of $n$.
\vspace{5mm}

\begin{tabularx}{0.8\textwidth} { 
  | >{\raggedright\arraybackslash}X 
  | >{\centering\arraybackslash}X 
  | >{\raggedleft\arraybackslash}X | }
 \hline
 $n$ & $a(n)$  & $b(n)$ \\
 \hline
 $100$  & $0.96315$  & $0.98067$  \\
\hline
$500$  & $0.98249$  & $0.99081$  \\
\hline
$1000$  & $0.98740$  & $0.99343$ \\
\hline
\end{tabularx}

\vspace{5mm}

\par
 To obtain  asymptotic estimates like the ones we give in  our main Theorem \ref{thebigboyforaandb}, it is often useful to use a modified circle method due to Wright \cite{Wrightcircle}, which allows one to look at a finite number of poles. Wright's technique has been used by several authors in recent years \cite{Karlandkathwright,BringDousseLoveMahl,DousseMertensAsymp,awayfromqrichard} to deal with combinatorial generating functions like $R(z;\tau)$, for example. The common theme here and in the works \cite{Karlandkathwright,BringDousseLoveMahl,DousseMertensAsymp,awayfromqrichard} is that the functions are generically mixed mock modular forms, which are more suited for the adapted circle method of Wright.

\par This work is organized as follows: 
In Section \ref{sectionprelim}, we define the main objects of this work. In Section \ref{near0estsection}, we provide estimates near $\tau =0$ of the Jacobi theta function and normalized Appell sum. In Sections \ref{Theanbaby} and 5, we employ the Wright circle method to prove the first part of our main theorem, and in Section \ref{thebnsec} we use results from \cite{TauberianSchaffer,Ingham} to prove the second part of our theorem. Finally, we offer some remarks on our results and thoughts on future work regarding this topic in Section \ref{conclusion}.

\section*{Acknowledegments}
\hspace{5mm} This work was supervised by Kathrin Bringmann, and we would like to thank her for her contributions. We would like to give special thanks to Caner Nazaroglu for giving insight into many of the calculations in this work and Jeremy Lovejoy for his helpful comments and suggestions regarding many identities. We finally want to thank the anonymous reviewer, Chris Jennings-Schaffer, Alexandru Ciolan, and Markus Schwagenscheidt for their helpful suggestions and edits.

\section{Preliminaries and basic definitions}\label{sectionprelim}
\hspace{5mm} The basic objects that appear in this work, and some of their properties, are collected in this section. We begin by recalling  the definitions of the normalized Appell sum and the Jacobi theta function:
\begin{align}\label{mudef}
     \mu(z_1,z_2;\tau) &:= \frac{\zeta^{\frac{1}{2}}_1}{\vartheta(z_2;\tau)}\sum_{n\in \mathbb{Z}}\frac{(-1)^nq^{\frac{n^2+n}{2}}\zeta^n_2}{1-q^n\zeta_1},
\end{align}
where $z_1,z_2 \in \mathbb{C}$, $\zeta_j:=  e^{2\pi i z_j}$ , $q := e^{2\pi i \tau}$,  $\tau \in \mathbb{H}$, and $\vartheta$ is the \textit{Jacobi theta function} (or $\vartheta$-function, for short) given by
\begin{align}\label{jacthetadef}
    \vartheta(z_2; \tau):= \sum_{m\in \frac{1}{2}+\mathbb{Z}}(-1)^mq^{\frac{m^2}{2}}\zeta^m_2.
\end{align}
Furthermore, we have the \textit{Jacobi product representation} for the $\vartheta$-function:
\begin{align}\label{thetaasjacprod}
\vartheta(z;\tau) = -iq^{\frac{1}{8}}\zeta^{-\frac{1}{2}}(\zeta;q)_{\infty}(q;q)_{\infty}(\zeta^{-1}q;q)_{\infty},
\end{align}
where $\zeta:= e^{2\pi i z}$. Many of the important functions discussed here were originally defined in  \cite{splittingmort,lovejoybaileypairs}. In those works, the authors used a slightly different notation for the $\vartheta$-function (denoted by $j$) and the Appell sum (denoted by $m$). One can go between the two via the formulas
\begin{align*}
    \vartheta(z_2;\tau) &= -iq^{1/8}\zeta_2^{-1/2}j(\zeta_2,q),\\
     m(\zeta_1; q; \zeta_2) &= iq^{1/8}\zeta_1^{-1/2}\mu(z_1+z_2, z_2; \tau).
\end{align*}
\hspace{5mm} We will use the following identities frequently.
\begin{Proposition}[See Ch. 1 of \cite{Zwegers}]\label{transformprop} The normalized Appell sum and $\vartheta$-function satisfy\textnormal{:}
\begin{enumerate}
    \item $\mu(z_1,z_2;\tau +1) = e^{-\frac{\pi i}{4}}\mu(z_1,z_2;\tau) $,
    \item $\mu(z_1+1,z_2;\tau) = \mu(z_1,z_2+1;\tau) = -\mu(z_1,z_2;\tau)$,
    \item\label{mutransformitem} $\mu(\frac{z_1}{\tau},\frac{z_2}{\tau};-\frac{1}{\tau}) = -\sqrt{-i\tau}e^{-\pi i \frac{(z_1-z_2)^2}{\tau}}\mu(z_1,z_2;\tau) + \frac{\sqrt{-i\tau}}{2i}e^{-\pi i \frac{(z_1-z_2)^2}{\tau}}h(z_1-z_2;\tau) $, where $h(z;\tau)$ is the Mordell integral given by
  \begin{align*}\label{mordelldef}
      h(z;\tau):= \int_{\mathbb{R}}\frac{e^{\pi i \tau x^2}e^{-2\pi z x}}{\textnormal{cosh}(\pi x)}dx,
  \end{align*}
    \item $h\left(\frac{z}{\tau}; -\frac{1}{\tau}\right) = \sqrt{-i\tau}e^{-\frac{\pi iz^2}{\tau} }h(z;\tau)$,
    \item $\vartheta(z+\tau;\tau) = -e^{-\pi i \tau-2\pi i z}\vartheta(z,\tau)$,
    \item $\vartheta(z;\tau+1) = e^{\frac{\pi i}{4}}\vartheta(z;\tau)$,
    \item $\vartheta(z+1;\tau)=-\vartheta(z;\tau),$
    \item\label{thetatransitem}
        $\vartheta\left(\frac{z}{\tau}; -\frac{1}{\tau}\right) = -i\sqrt{-i\tau}e^{\frac{\pi i z^2}{\tau}}\vartheta(z; \tau).$
\end{enumerate}
\end{Proposition}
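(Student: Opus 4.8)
The plan is to sort the eight identities by depth. The first group consists of the quasi-periodicity relations (2), (5), (7) and the $\tau \mapsto \tau + 1$ relations (1), (6), all of which I would derive directly from the defining series \eqref{mudef} and \eqref{jacthetadef}. For (7) one reads off $\zeta_2^m \mapsto (-\zeta_2)^m$ in \eqref{jacthetadef}; for (5) one substitutes $z \mapsto z + \tau$, completes the square, and reindexes $m \mapsto m - 1$ within $\tfrac12 + \Z$; and (2) follows because $z_1 \mapsto z_1 + 1$ sends the prefactor $\zeta_1^{1/2}$ in \eqref{mudef} to $-\zeta_1^{1/2}$ while leaving the sum untouched, and $z_2 \mapsto z_2 + 1$ changes only $\vartheta(z_2;\tau)$, by the sign in (7). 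For (6), replacing $\tau$ by $\tau+1$ multiplies the $m$-th term of \eqref{jacthetadef} by $e^{\pi i m^2}$, and writing $m = k + \tfrac12$ with $k \in \Z$ gives $e^{\pi i m^2} = e^{\pi i(k^2 + k)} e^{\pi i/4} = e^{\pi i /4}$ uniformly in $m$, since $k^2 + k$ is even. Identity (1) then follows: in \eqref{mudef} both $q^{(n^2+n)/2}$ (as $n^2 + n$ is even) and $q^n \zeta_1$ are unchanged under $\tau \mapsto \tau + 1$, so the entire sum is invariant and only the prefactor $\vartheta(z_2;\tau)^{-1}$ contributes, giving the factor $e^{-\pi i/4}$ via (6).

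The second group is the pair of $S$-transformations not involving $\mu$, namely (8) for $\vartheta$ and (4) for the Mordell integral $h$. For (8) I would invoke Poisson summation for the shifted lattice $\tfrac12 + \Z$ (equivalently, for a shifted Gaussian), which converts \eqref{jacthetadef} into the series for $\vartheta(z;\tau)$ with the Gaussian prefactor $-i\sqrt{-i\tau}\, e^{\pi i z^2/\tau}$; the branch of $\sqrt{-i\tau}$ is pinned down by continuity along the positive imaginary axis, where every quantity is positive. For (4) I would use that $1/\cosh(\pi x)$ is essentially its own Fourier transform, $\int_{\mathbb{R}} \frac{e^{2\pi i xt}}{\cosh(\pi t)}\, dt = \frac{1}{\cosh(\pi x)}$, substitute this into $h(z/\tau;-1/\tau)$, interchange the two integrations (justified by absolute convergence for $\tau \in \mathbb{H}$), evaluate the inner Gaussian integral via $\int_{\mathbb{R}} e^{\pi i \tau x^2 + 2\pi i xw}\,dx = (-i\tau)^{-1/2}e^{-\pi i w^2/\tau}$, and finally use that $h$ is even in $z$ to match the two sides.

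The last identity, the $S$-transformation (3) for $\mu$, is the main obstacle and the reason the statement cites Zwegers. The route I would take is to pass to the non-holomorphic completion $\widehat{\mu}(z_1,z_2;\tau) := \mu(z_1,z_2;\tau) + \tfrac{i}{2}R(z_1 - z_2;\tau)$, where $R$ is the Eichler-type integral of a unary theta series built from the error function, prove that $\widehat{\mu}$ transforms under $\tau \mapsto -1/\tau$ as a weight-$\tfrac12$ (vector-valued) modular form, that is, $\widehat{\mu}(z_1/\tau, z_2/\tau; -1/\tau) = -\sqrt{-i\tau}\, e^{-\pi i(z_1 - z_2)^2/\tau}\widehat{\mu}(z_1, z_2; \tau)$, and then subtract the completion term on both sides. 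The discrepancy between $R(z/\tau; -1/\tau)$ and $\sqrt{-i\tau}\, e^{-\pi i z^2/\tau} R(z;\tau)$ is precisely what reconstitutes the Mordell integral $h$ — using the classical relation tying $h$, $R$, and the unary theta series together — and tracking the Gaussian prefactors produces the stated correction term $\tfrac{\sqrt{-i\tau}}{2i}e^{-\pi i(z_1-z_2)^2/\tau} h(z_1 - z_2;\tau)$. A more self-contained alternative is to expand $1/(1 - q^n \zeta_1)$ as a geometric series in a strip of $\operatorname{Im}(z_1)$ where it converges, apply Poisson summation termwise, recognize the leftover contour integral as $h$, and then extend to all $z_1$ by analytic continuation. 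In either case the bookkeeping of the quadratic exponentials and the square-root branch is the only delicate point; since all of this is worked out in Chapter 1 of \cite{Zwegers}, I would simply cite it there.
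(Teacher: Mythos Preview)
Your proposal is mathematically sound, but it goes well beyond what the paper actually does: the paper gives \emph{no proof} of this proposition at all. It is stated with the attribution ``See Ch.~1 of \cite{Zwegers}'' and used thereafter as a black box; there is no argument, sketch, or further justification in the text. So your detailed outline --- elementary series manipulations for the elliptic and $T$-transformations, Poisson summation for (8), the self-dual $\operatorname{sech}$ Fourier pair for (4), and the completion $\widehat{\mu}$ (or the alternative geometric-series/Poisson route) for (3) --- is not so much a different approach as the only approach on offer, and it matches the standard proofs in Zwegers' thesis that the paper is citing.
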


For $k\geq 3$ Lovejoy and Osburn showed that the following family of functions  are mock theta functions.

\begin{Definition}[\cite{lovejoybaileypairs}]\label{mainpairs}
Let $k\geq 3$ and $n_1,...,n_k$ be integers such that $1\leq n_1\leq...\leq n_k$. Define
\begin{align*}
    B_k(n_k,n_{k-1},...,n_1;q):= (-1)^{n_1}(-q)_{n_{k-1}}q^{\tiny{\begin{pmatrix}n_{k-1}+1\\2\end{pmatrix}}}\frac{\prod^{k-1}_{j=2}q^{2^{j-2}n_{k-j}}\left(-q^{2^{j-2}};q^{2^{j-2}} \right)_{2n_{k-j}}}{\prod^k_{j=1}\left( q^{2^{j-1}};q^{2^{j-1}}\right)_{n_{k-j+1}-n_{k-j}}},
\end{align*}
with $n_0 := 0$. Then we define
\begin{align*}
    R^{(k)}_1(q) &:= \sum_{n_k\geq n_{k-1}\geq ...\geq n_1\geq 0}q^{{n_k+1}\choose{2}}B_k(n_k,...,n_1;q),\\
    R^{(k)}_3(q) &:= \sum_{n_k\geq n_{k-1}\geq ...\geq n_1\geq 0}\frac{(-1)^{n_k}q^{n^2_k+2n_k}(q;q^2)_{n_k}}{(-q^2;q^2)_{n_k}}B_k\left(n_k,...,n_1;q^2\right).
\end{align*}
\end{Definition}

The authors of \cite{lovejoybaileypairs} showed that
\begin{equation}\label{Rdef}
    R^{(3)}_1(q)= \nu(-q),
\end{equation}
where $\nu(q):=\sum_{n \geq 0}\frac{q^{n^2+n}}{(-q;q^2)_{n+1}}$ is a classical third order mock theta function. 
\vspace{5mm}

The first definition comes from the work of \cite{splittingmort}, and uses the standard combinatorial notation for the Jacobi triple product
\begin{align*}
    j(x,q):= (x)_{\infty}\left(qx^{-1}\right)_{\infty}(q)_{\infty},
\end{align*}
where $x$ is a non-zero complex number. When $x$ is an integral or half integral power of $q$, we will always write $j$ in terms of a $\vartheta$-function as discussed in Section \ref{sectionprelim}, via the transformations
\begin{align*}
    \vartheta(a\tau;b\tau) &= -i q^{\frac{b}{8}}q^{-\frac{a}{2}}j(q^a,q^b),\\
    \vartheta\left(a\tau +\frac{1}{2}; b\tau\right) &= -q^{\frac{b}{8}}q^{-\frac{a}{2}}j(-q^a,q^b).
\end{align*}

\begin{Definition}[see Section 2, \cite{lovejoybaileypairs} and Theorem 1.3, \cite{splittingmort}]\label{thetasplitdef} Let $x$ and $y$ be complex numbers so that they do  not cause poles in the quotients that follow. Then for positive integers $n,p$, $r := r^* + \left\{\frac{(n-1)}{2}\right\}$ and $s:= s^* + \left\{\frac{(n-1)}{2}\right\}$, with $\{a\}$ denoting the fractional part of the number $a$,  define the function $\theta_{n,p}(x,y,q)$ by,
\begin{align*}
 &\theta_{n,p}(x,y,q):=\\ &\frac{j^3\left(q^{p^2(2n+p)},q^{3p^2(2n+p)}\right)}{j\left(-1,q^{np(2n+p)}\right)}\;\Bigg\{\sum^{p-1}_{r^* = 0}\sum^{p-1}_{s^* = 0} q^{n\begin{pmatrix} r - \frac{(n - 1)}{2}\\ 2\end{pmatrix}+(n + p)\left(r - \frac{(n - 1)}{2}\right)\left(s + \frac{(n + 1)}{2}+ n\begin{pmatrix}s + \frac{(n + 1)}{2}\\ 2\end{pmatrix}\right)} \\
 &\hspace{50mm}\times(-x)^{r - \frac{n-1}{2}}(-y)^{s + \frac{(n + 1)}{2}} \\
 &\times \frac{j\left(-q^{pn(s - r)}\frac{x^n}{y^n}, q^{np^2}\right)\;j\left(q^{p(2n + p)(r + s)+p(n + p)}x^py^p, q^{p^2(2n + p)}\right)}{  j\left(q^{pr(2n + p)+\frac{p(n + p)}{2}}\frac{(-y)^{n + p}}{(-x)^n}, q^{p^{2}(2n + p)}\right)\;j\left(q^{ps(2n + p)+\frac{p(n + p)}{2}}\frac{(-y)^{n + p}}{(-x)^n},\; q^{p^{2}(2n + p)}\right)    }\Bigg\},
\end{align*}

where $b\choose c$ is the standard binomial coefficient.
\end{Definition}
Recall that 
\begin{align*}
    m(\zeta_1; q; \zeta_2) &= iq^{1/8}\zeta_1^{-1/2}\mu(z_1+z_2, z_2; \tau).
\end{align*}

We then have the following theorem.
 \begin{Theorem}[\cite{lovejoybaileypairs}] For $k\geq 3$ the function $R^{(k)}_3(q)$ is a mock theta function and satisfies the formula
 \begin{align*}
     R^{(k)}_3(q) &= 2q^{-2^{k-3}\left(2^{k-2}+1\right)}m\left(q^{2^{k-2}},q^{2^{2k-2}+2^k},-1 \right) -2q^{\frac{1}{8}}\frac{\theta_{1,4}\left(q^{2^{k-2}+1},-q^{2^{k-2}+1}, q\right)}{\vartheta\left(\frac{1}{2};\tau\right)}\\
     &=2iq^{-2^{k-3}}\mu\left(2^{k-2}\tau+\frac{1}{2}, \frac{1}{2}; \left(2^{2k-2}+2^k\right)\tau   \right)-2q^{\frac{1}{8}}\frac{\theta_{1,4}\left(q^{2^{k-2}+1},-q^{2^{k-2}+1}, q\right)}{\vartheta\left(\frac{1}{2};\tau\right)}.
 \end{align*}
 \end{Theorem}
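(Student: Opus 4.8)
The plan is to deduce the first displayed identity from the Bailey-pair machinery together with the Hickerson--Mortenson splitting, and then to obtain the second identity from the first by nothing more than the change of notation relating $m$ and $\mu$.

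First I would recall how $R^{(k)}_3$ is built: it comes from a seed Bailey pair relative to $a=1$ to which the limiting form of Bailey's lemma (the one in which the two free parameters are sent to infinity, roughly squaring the base at each step) is applied repeatedly. The nested powers $2^{j-1}$ in the summand $B_k(n_k,\dots,n_1;q)$ of Definition~\ref{mainpairs} are exactly the fingerprint of iterating this construction $k$ times; carrying out the iteration collapses the $k$-fold sum defining $R^{(k)}_3(q)$ into a single bilateral sum whose $q$-exponent is an indefinite quadratic form in one summation variable plus an explicit linear shift, i.e. after the usual unfolding a Hecke-type double sum $\sum_{\mathrm{sgn}(i)=\mathrm{sgn}(j)}(-1)^{i+j}q^{Q(i,j)}x^{i}y^{j}$ of the shape handled by Theorem~1.3 of \cite{splittingmort}. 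This first reduction is the computational core of \cite{lovejoybaileypairs}, and I would invoke it directly: it is precisely where the Bailey-chain evaluation of $R^{(k)}_3(q)$ is carried out.

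Next I would feed the resulting Hecke-type sum into the Hickerson--Mortenson splitting of Definition~\ref{thetasplitdef} (Theorem~1.3 of \cite{splittingmort}). The task is to match the quadratic form $Q$ and the sign characters of the double sum against the hypotheses of the splitting; for the family $R^{(k)}_3$ this matching forces $n=1$ and $p=4$, which is exactly why only the single auxiliary function $\theta_{1,4}$ and one normalized Appell sum $m$ appear. The splitting then writes $R^{(k)}_3(q)$ as $2q^{-2^{k-3}(2^{k-2}+1)}m\!\left(q^{2^{k-2}},q^{2^{2k-2}+2^k},-1\right)$ plus the correction $-2q^{1/8}\theta_{1,4}\!\left(q^{2^{k-2}+1},-q^{2^{k-2}+1},q\right)/\vartheta(1/2;\tau)$, once the triple-theta quotients occurring in Definition~\ref{thetasplitdef} and the $q$-powers produced by completing the square are simplified with the elementary $\vartheta$- and $j$-identities of Proposition~\ref{transformprop}. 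I expect this matching-and-simplification step to be the main obstacle: the splitting formula is long, and coaxing its generic output into the clean two-term shape above requires careful tracking of powers of $q$ and repeated use of the Jacobi triple product and quasi-periodicity.

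Finally, the second equality is purely notational. Put $\tau':=\left(2^{2k-2}+2^k\right)\tau$, so that $q^{2^{2k-2}+2^k}=e^{2\pi i\tau'}$, and take $\zeta_1=q^{2^{k-2}}$ (hence $z_1=2^{k-2}\tau$) and $\zeta_2=-1$ (hence $z_2=\tfrac12$); note $\zeta_1\notin q^{(2^{2k-2}+2^k)\mathbb{Z}}$ for $k\ge 3$, so no poles arise. The dictionary $m(\zeta_1;q;\zeta_2)=iq^{1/8}\zeta_1^{-1/2}\mu(z_1+z_2,z_2;\tau)$ recalled just before the theorem, applied with base $q^{2^{2k-2}+2^k}$, gives
\begin{align*}
m\!\left(q^{2^{k-2}};q^{2^{2k-2}+2^k};-1\right)=i\,q^{2^{2k-5}+2^{k-3}}\cdot q^{-2^{k-3}}\,\mu\!\left(2^{k-2}\tau+\tfrac12,\ \tfrac12;\ \left(2^{2k-2}+2^k\right)\tau\right)=i\,q^{2^{2k-5}}\,\mu\!\left(2^{k-2}\tau+\tfrac12,\tfrac12;\left(2^{2k-2}+2^k\right)\tau\right).
\end{align*}
Multiplying by $2q^{-2^{k-3}(2^{k-2}+1)}=2q^{-2^{2k-5}-2^{k-3}}$, the powers of $q$ telescope to $2iq^{-2^{k-3}}$, while the $\theta_{1,4}$-term is untouched; this is exactly the second displayed formula. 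Hence the two identities are equivalent, and the theorem reduces to the Bailey-chain plus Hickerson--Mortenson derivation sketched above.
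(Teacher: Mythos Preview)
The paper does not give its own proof of this theorem: it is quoted directly from \cite{lovejoybaileypairs}, and the paper moves immediately from the statement to Example~\ref{R3_3example}. So there is nothing in the paper to compare your argument against beyond the citation itself.

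That said, your outline is exactly the strategy of \cite{lovejoybaileypairs}: iterate the Bailey chain to collapse $R^{(k)}_3$ into a Hecke-type double sum, then apply the Hickerson--Mortenson splitting (Theorem~1.3 of \cite{splittingmort}) with parameters $(n,p)=(1,4)$ to extract the single $m$-term and the $\theta_{1,4}$ correction. Your treatment of the second equality is correct and is precisely the notational step the paper has in mind: applying the dictionary $m(\zeta_1;q;\zeta_2)=iq^{1/8}\zeta_1^{-1/2}\mu(z_1+z_2,z_2;\tau)$ with base $q^{2^{2k-2}+2^k}$ produces the factor $i\,q^{2^{2k-5}}$, and multiplying by $2q^{-2^{k-3}(2^{k-2}+1)}$ leaves $2iq^{-2^{k-3}}$, as you computed. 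No gap here.
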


 \begin{Example}[The function $R_{3,3}$]\label{R3_3example}
The Fourier expansion for $R_{3,3}(q)$ takes the shape
 \begin{align*}
     & 1-{q}^{3}+{q}^{4}-{q}^{5}+{q}^{6}+{q}^{8}-{q}^{9}\\&+{q}^{10}-2\,{q}^{11
}+2\,{q}^{12}-2\,{q}^{13}+{q}^{14}-2\,{q}^{15}+2\,{q}^{16}-{q}^{17}+3
\,{q}^{18}\\&-3\,{q}^{19}+3\,{q}^{20}-4\,{q}^{21}+3\,{q}^{22}-2\,{q}^{23}
+4\,{q}^{24}-4\,{q}^{25}+4\,{q}^{26}\\&-6\,{q}^{27}+5\,{q}^{28}-6\,{q}^{
29}+6\,{q}^{30}-5\,{q}^{31}+6\,{q}^{32}-6\,{q}^{33}+7\,{q}^{34}\\&-9\,{q}
^{35}+9\,{q}^{36}-9\,{q}^{37}+9\,{q}^{38}-9\,{q}^{39}+11\,{q}^{40}-10
\,{q}^{41}+12\,{q}^{42}\\&-14\,{q}^{43}+13\,{q}^{44}-16\,{q}^{45}+15\,{q}
^{46}-14\,{q}^{47}+17\,{q}^{48}-16\,{q}^{49}+O \left( {q}^{50}
 \right).
 \end{align*}
 One can see the alternating sign changes and our main theorem shows that this behavior holds in the limit $n\to \infty$  We can explicitly get the function $R_{3,3}(q)$ in a form that is suitable for applying the circle method:
 \begin{align*}
     R_{3,3}(q) &= 2iq^{-1}\mu\left(2\tau+\frac{1}{2}, \frac{1}{2}; 24\tau   \right)-2q^{\frac{1}{8}}\frac{\theta_{1,4}\left(q^3,-q^3, q\right)}{\vartheta\left(\frac{1}{2};\tau\right)}\\
     &= 2iq^{-1}\mu\left(2\tau+\frac{1}{2}, \frac{1}{2}; 24\tau   \right)-
     \frac{2q^{\frac{{25}}{8}}}{\vartheta\left(\frac{1}{2}; \tau\right)}\frac{j^3(q^{96},q^{288})}{j(-1,q^{24})}\\
     &\;\;\;\;\;\;\;\;\times\left(\sum^3_{r,s = 0}(-1)^rq^{\frac{r(r-1)}{2}+\frac{s(s+1)}{2}+ 5r(s+1)+3(r+s)}\;\frac{j\left(q^{4(s-r)},q^{16} \right)\;j\left(q^{24(r+s)+44},q^{96} \right)}{j\left(-q^{24r +22},q^{96}\right)\; j\left(-q^{24s +22},q^{96}\right)}\right)\\
     &=  2iq^{-1}\mu\left(2\tau+\frac{1}{2}, \frac{1}{2}; 24\tau   \right) + \frac{2iq^{\frac{417}{8}}}{\vartheta\left(\frac{1}{2}; \tau\right)}\frac{\vartheta^3(96\tau; 288\tau)}{\vartheta\left(\frac{1}{2}; 24\tau\right)}\\
     &\;\;\;\;\;\;\;\;\times\left(\sum^3_{r,s = 0}\Bigg((-1)^rq^{Q(r,s)}\cdot \frac{\vartheta\left(4(s-r)\tau; 16\tau \right)\;\vartheta\left(\{24(r+s)+44\}\tau; 96\tau \right)}{\vartheta\left(\frac{1}{2}+\{24r +22\}\tau ; 96\tau\right)\; \vartheta\left(\frac{1}{2}+\{24s +22\}\tau; 96\tau\right)}\Bigg)\right)\\
     &:= 2iq^{-1}\mu\left(2\tau+\frac{1}{2}, \frac{1}{2}; 24\tau   \right) +T(\tau) ,
 \end{align*}
 where we defined
 \begin{align*}
     Q(r,s):= \frac{r(r-1)}{2}+\frac{s(s+1)}{2}+ 5s+6r+5rs,
 \end{align*}
 above. The study of $T(\tau)$ will be the main focus for the rest of the chapter.
 \end{Example}
 

 \begin{Example}[The function $R^{(3)}_1$]
 Recall that
\begin{align*}
    R^{(3)}_{1}(q) = \nu(-q).
\end{align*}
 With this information, we can show the following formula holds with $\tau \mapsto \tau +\frac{1}{2}$ (see A.2, \cite{Bringmannbook})
    \begin{align*}
         R^{(3)}_{1}(q) &= -2iq^{-\frac{1}{2}}\mu(5\tau,3\tau;12\tau) + e^{\frac{-\pi i}{12}}q^{-\frac{1}{3}}\frac{\eta(\tau + \frac{1}{2})\eta(3\tau+\frac{1}{2})\eta(12\tau)}{\eta(2\tau)\eta(6\tau)}.
    \end{align*}
 \end{Example}

\section{Preliminary estimates for modular theta functions and Appell sums near $\tau =0$}\label{near0estsection}

\hspace{5mm} We collect all of the necessary estimates for the accessory objects that appear in this work near the point $\tau =0$. We have two subcategories of estimates that we need to deal with: the classical estimates that only need one error term, and the  higher order estimates that keep many error terms in the asymptotic expansion.

\subsection{Classical estimates}
\hspace{5mm} We begin with the $\vartheta$-functions near the origin. 
\begin{Lemma}\label{thetatestimateprelim}
Let $\alpha \in [0,1)$, let $q:= e^{2\pi i \tau}$, $q_0 := e^{-\frac{2\pi i }{\tau}}$, and $k > 1$ be a rational number. As $\tau \to 0$ within a cone,
\begin{align}
     \vartheta(\alpha\tau; \tau) &=\label{fulllatticethetadecay} \frac{-2i\; \textnormal{sin}(\pi\alpha)q^{-\frac{\alpha^2}{2}}q^{\frac{1}{8}}_0}{\sqrt{-i\tau}}\left( 1 +  O(q_0) \right) ,\\
     \vartheta\left(\frac{1}{k} + \alpha\tau; \tau\right) &= \label{mixmibabi1moretime} -\frac{q^{-\frac{\alpha^2}{2}}e^{\pi i\alpha(1-\frac{2}{k})}}{\sqrt{-i\tau}}q^{\frac{1}{2k^2}-\frac{1}{2k}+\frac{1}{8}}_0\left(1 + O\left(q^{\frac{1}{k}}_0\right)  \right),\\
     \eta(\tau)&= \label{etagrowth}\frac{q^{\frac{1}{24}}_0}{\sqrt{-i\tau}}\left(1+O(q_0)\right).
\end{align}
\end{Lemma}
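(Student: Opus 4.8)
The overall strategy is to transform each function to a neighbourhood of $i\infty$ using the modular transformation formulas in Proposition \ref{transformprop} (items \ref{thetatransitem} and the analogous statement for $\eta$), and then expand the resulting $q_0$-series, where $q_0 = e^{-2\pi i/\tau}$ is small when $\tau \to 0$ within a cone. First I would treat \eqref{fulllatticethetadecay}: apply item \ref{thetatransitem} of Proposition \ref{transformprop} in the form $\vartheta(\alpha\tau;\tau) = \vartheta\!\left(\tfrac{\alpha\tau}{-1/\tau}\cdot(-1/\tau)^{-1}\cdot\ldots\right)$ — more precisely, set $z = \alpha$ and use $\vartheta\!\left(\tfrac{z}{\tau};-\tfrac1\tau\right) = -i\sqrt{-i\tau}\,e^{\pi i z^2/\tau}\vartheta(z;\tau)$ with $\tau$ replaced appropriately so that $\vartheta(\alpha\tau;\tau)$ is expressed in terms of $\vartheta\!\left(\alpha;-\tfrac1\tau\right)$. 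Then invoke the Jacobi product representation \eqref{thetaasjacprod} for $\vartheta(\alpha;-1/\tau)$: the infinite products $(q_0;q_0)_\infty$ and $(e^{2\pi i\alpha}q_0;q_0)_\infty$, $(e^{-2\pi i\alpha}q_0;q_0)_\infty$ are all $1 + O(q_0)$, while the factor $(e^{2\pi i\alpha};q_0)_\infty$ contributes its $n=0$ term $1 - e^{2\pi i\alpha}$; combining $-i q_0^{1/8} e^{-\pi i\alpha}(1-e^{2\pi i\alpha}) = -2i\sin(\pi\alpha)q_0^{1/8}$ and tracking the prefactor $q^{-\alpha^2/2}$ coming from $e^{\pi i z^2/\tau}$-type exponentials yields the stated formula.

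For \eqref{mixmibabi1moretime} the idea is the same but with a shift by $1/k$ rather than $1/2$. I would first use periodicity (item 7 of Proposition \ref{transformprop}) if needed, then the $S$-transformation, writing $\vartheta\!\left(\tfrac1k + \alpha\tau;\tau\right)$ in terms of $\vartheta\!\left(\tfrac{1}{k}\cdot\tfrac{1}{(-1/\tau)} + \ldots; -\tfrac1\tau\right)$; the key point is that the argument of the transformed theta function becomes $\tfrac{\tau}{k} + \text{const}$, i.e. a genuine shift inside the lattice variable rather than a half-period, so after the Jacobi product expansion the relevant ``small'' parameter controlling the error is $q_0^{1/k}$ (from the factor $(e^{2\pi i \cdot(\tau/k \cdot(-1/\tau))} \cdot\ldots)$), not $q_0$. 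The phase $e^{\pi i\alpha(1-2/k)}$ and the rational exponent $\tfrac{1}{2k^2}-\tfrac{1}{2k}+\tfrac18$ on $q_0$ come from carefully collecting the quadratic exponentials in the transformation formula together with the leading $q_0$-power in the product. The $\eta$ estimate \eqref{etagrowth} is the most standard: it is immediate from the modular transformation $\eta(-1/\tau) = \sqrt{-i\tau}\,\eta(\tau)$ together with $\eta(\tau) = q_0^{1/24}(q_0;q_0)_\infty = q_0^{1/24}(1 + O(q_0))$ after replacing $\tau$ by $-1/\tau$.

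The computations are essentially bookkeeping; the one place requiring genuine care — and hence the main obstacle — is getting every phase factor and every rational power of $q$ and $q_0$ exactly right in \eqref{mixmibabi1moretime}. The shift by $1/k$ interacts with the quadratic exponential $e^{\pi i z^2/\tau}$ in the $S$-transformation to produce cross terms linear in $\alpha$, and one must be scrupulous about which branch of $\sqrt{-i\tau}$ is used and about the ``within a cone'' hypothesis, which guarantees $|q_0| \to 0$ and justifies discarding all but the leading term of each $q_0$-product; a sign or exponent error here would propagate into the asymptotics of $a(n)$ and $b(n)$ later, so I would double-check \eqref{mixmibabi1moretime} against the known special case $k=2$ (where it should be consistent with the half-period shift computation) and against numerics.
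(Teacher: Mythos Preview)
Your proposal is correct and follows essentially the same approach as the paper: apply the $S$-transformation from Proposition~\ref{transformprop}(\ref{thetatransitem}) to send $\tau \to -1/\tau$, then expand the resulting $\vartheta$-function via the Jacobi triple product \eqref{thetaasjacprod} and read off the leading term and error in $q_0$. The paper does not invoke periodicity (item~7) for \eqref{mixmibabi1moretime} --- it applies the $S$-transformation directly with $z = \tfrac{1}{k}+\alpha\tau$, obtaining $\vartheta\!\left(\tfrac{1}{k\tau}+\alpha;-\tfrac{1}{\tau}\right)$ and then the Jacobi product --- but this is a cosmetic difference, not a change of method.
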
.
\proof
We begin with Eq. \eqref{fulllatticethetadecay}. Using the Jacobi product formula and Proposition \ref{transformprop}.\ref{thetatransitem} , we have as $\tau \to 0$

\begin{align*}
    \vartheta(\alpha\tau; \tau) &=\frac{iq^{-\frac{\alpha^2}{2}}}{\sqrt{-i\tau}}\vartheta\left(\alpha; -\frac{1}{\tau}\right) =\frac{e^{-\pi i \alpha}q^{-\frac{\alpha^2}{2}} q^{\frac{1}{8}}_0}{\sqrt{-i\tau}}(e^{2\pi i \alpha};q_0)_{\infty} (q_0e^{-2\pi i \alpha};q_0)_{\infty} (q_0;q_0)_{\infty}.
\end{align*}
Thus,
\begin{align*}
   \vartheta(\alpha\tau; \tau) &= \frac{e^{-\pi i \alpha}q^{-\frac{\alpha^2}{2}} q^{\frac{1}{8}}_0}{\sqrt{-i\tau}}\Big(1-e^{2\pi i \alpha} + O(q_0)\Big)\Big(1+O(q_0)\Big)\Big(1+O(q_0)\Big)\\
    &=\frac{e^{-\pi i \alpha}q^{-\frac{\alpha^2}{2}} q^{\frac{1}{8}}_0}{\sqrt{-i\tau}}(1-e^{2\pi i \alpha} + O(q_0)) =\frac{e^{-\pi i \alpha}q^{-\frac{\alpha^2}{2}}( 1 - e^{2\pi i \alpha})q^{\frac{1}{8}}_0}{\sqrt{-i\tau}}\left( 1 +  O(q_0) \right)\\
    &= \frac{-2i \textnormal{sin}(\pi\alpha)q^{-\frac{\alpha^2}{2}}q^{\frac{1}{8}}_0}{\sqrt{-i\tau}}\left( 1 +  O(q_0) \right) ,
\end{align*}
where the second to last step follows from the fact that $1-e^{2\pi i \alpha}$ is $O(1)$. Similarly  for Eq. \eqref{mixmibabi1moretime},
\begin{align*}
    &\vartheta\left(\frac{1}{k} + \alpha\tau; \tau  \right) = \frac{ie^{\frac{-\pi i\left(\frac{1}{k^2}+\frac{2\alpha\tau}{k}+\alpha^2\tau^2\right) }{\tau}}}{\sqrt{-i\tau}}\vartheta\left(\frac{1}{k\tau}+ \alpha ;\frac{-1}{\tau}\right)\\
    &= \frac{ie^{-\frac{2\pi i\alpha}{k}}q^{-\frac{\alpha^2}{2}}q^{\frac{1}{2k^2}}_0}{\sqrt{-i\tau}}\left(-iq^{\frac{1}{8}}_0e^{-\pi i(\alpha+\frac{1}{k\tau})}\right)\left(e^{2\pi i (\alpha+\frac{1}{k\tau})};q_0\right)_{\infty}\left(q_0e^{-2\pi i (\alpha+\frac{1}{k\tau})};q_0\right)_{\infty}(q_0;q_0)_{\infty}\\
    &= -\frac{q^{-\frac{\alpha^2}{2}}e^{\pi i\alpha(1-\frac{2}{k})}}{\sqrt{-i\tau}}q^{\frac{1}{2k^2}-\frac{1}{2k}+\frac{1}{8}}_0\left(1 + O(q^{\frac{1}{k}}_0)  \right).
\end{align*}
Finally, the estimate for the $\eta$-function follows directly from the transformation law in Proposition \ref{transformprop}.\qed

We also need similar estimates for the Appell function near $\tau = 0$. Equation \eqref{mudef} gives
\begin{align*}
    \mu(5\tau,3\tau;12\tau) = \frac{q^{\frac{5}{2}}}{\vartheta(3\tau; 12\tau)}\sum_{m\in\mathbb{ Z}}\frac{(-1)^mq^{6m(m+1)}q^{3m}}{1-q^{12m}q^5}.
\end{align*}
Proposition \ref{transformprop}.3 implies that 
\begin{align}\label{muestimate}
    \mu(5\tau,3\tau;12\tau) = -q^2\frac{\mu\left(\frac{5}{12}, \frac{1}{4}; -\frac{1}{12\tau}\right)}{\sqrt{-12i\tau}} + \frac{h(2\tau;12\tau)}{2i}.
\end{align}
Before moving forward, we show that the integral $h(2\tau;12\tau)$ can be bounded by a standard Gaussian integral.

\begin{Lemma}\label{hlemma}
Let $0\leq \alpha <\frac{1}{2}$. Then as $\tau \to 0$ in a cone,
\begin{align*}
    h(\alpha\tau;\tau) \ll 1.
\end{align*}
\end{Lemma}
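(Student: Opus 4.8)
The plan is to bound the Mordell integral by brute force, exploiting the fact that the $\cosh(\pi x)$ in the denominator supplies exponential decay which dominates every other factor once $\tau$ is near the origin.

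First I would substitute $z = \alpha\tau$ into the definition of $h$, obtaining
\[
h(\alpha\tau;\tau) = \int_{\mathbb{R}} \frac{e^{\pi i \tau x^2} e^{-2\pi\alpha\tau x}}{\cosh(\pi x)}\,dx,
\]
and move the absolute value inside the integral. Writing $\tau = u + iv$ with $v = \operatorname{Im}(\tau) > 0$, one has $\bigl|e^{\pi i \tau x^2}\bigr| = e^{-\pi v x^2} \le 1$ and $\bigl|e^{-2\pi\alpha\tau x}\bigr| = e^{-2\pi\alpha u x}$. Combining these with the elementary inequality $\cosh(\pi x) \ge \tfrac12 e^{\pi|x|}$ gives
\[
\bigl|h(\alpha\tau;\tau)\bigr| \le 2\int_{\mathbb{R}} e^{-2\pi\alpha u x - \pi|x|}\,dx.
\]

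Next I would use the hypothesis that $\tau \to 0$ inside a cone, which forces $|u| \le |\tau| \to 0$. Since $0 \le \alpha < \tfrac12$, once $|\tau|$ is small enough (for $\alpha>0$ it suffices that $|\tau| \le \tfrac{1}{4\alpha}$, and for $\alpha = 0$ the claim is immediate) we get $|2\pi\alpha u x| \le \tfrac{\pi}{2}|x|$ for all real $x$, so the exponent in the last integral is at most $-\tfrac{\pi}{2}|x|$. Hence
\[
\bigl|h(\alpha\tau;\tau)\bigr| \le 2\int_{\mathbb{R}} e^{-\frac{\pi}{2}|x|}\,dx = \frac{8}{\pi},
\]
uniformly for $\tau$ in the cone with $|\tau|$ sufficiently small, which is exactly $h(\alpha\tau;\tau) \ll 1$.

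I do not expect any serious obstacle here; the only point that needs care is the uniformity of the bound as $\tau \to 0$, which is secured by the fact that the stray factor $e^{-2\pi\alpha u x}$ is absorbed into the exponential decay $e^{-\pi|x|}$ coming from $1/\cosh(\pi x)$, precisely because $\alpha < \tfrac12$ and $\operatorname{Re}(\tau) \to 0$. It is worth remarking that retaining only the Gaussian factor $e^{-\pi v x^2}$ and discarding the rest would yield merely the weaker bound $O(|\tau|^{-1/2})$; the strength of the stated estimate relies on the decay of $1/\cosh(\pi x)$.
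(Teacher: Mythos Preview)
Your proof is correct and in fact more direct than the paper's. The paper first applies the modular transformation law for the Mordell integral (Proposition~\ref{transformprop}(4)) to write
\[
h(\alpha\tau;\tau)=\frac{q^{\alpha^2/2}}{\sqrt{-i\tau}}\,h\!\left(\alpha;-\tfrac{1}{\tau}\right),
\]
then splits the resulting integral into $w>0$ and $w<0$, bounds the combination $-\frac{e^{(2\alpha-1)\pi w}}{1+e^{-2\pi w}}+\frac{e^{-(2\alpha+1)\pi w}}{1+e^{-2\pi w}}$ by a constant on $[0,\infty)$ (this is where the hypothesis $\alpha<\tfrac12$ actually enters), and finally absorbs the remaining Gaussian $\int_0^\infty e^{-\pi i w^2/\tau}\,dw \ll |\tau|/\sqrt{v}$ against the prefactor $1/\sqrt{|\tau|}$, a step that genuinely uses the cone condition. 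By contrast, you work directly with the untransformed integral and let the decay of $1/\cosh(\pi x)$ do all the work. A small observation: your argument does not actually need the cone hypothesis (the inequality $|u|\le|\tau|$ is automatic), and it would go through for any fixed $\alpha\ge 0$, not just $\alpha<\tfrac12$, since all you require is $2\pi\alpha|u|<\pi$ eventually. The paper's route is natural in context because the same transformation is already being invoked for $\mu$ and $\vartheta$, but your shortcut is cleaner for this isolated estimate.
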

\proof
The proof  follows from the transformation law for $h$ given in Proposition \ref{transformprop}.4:
\begin{align*}
h(\alpha\tau;\tau) &= \frac{q^{\frac{\alpha^2}{2}}}{\sqrt{-i\tau}} \;h\left(\alpha;-\frac{1}{\tau}\right)= \frac{q^{\frac{\alpha^2}{2}}}{\sqrt{-i\tau}}\int^{\infty}_{-\infty}\frac{e^{-\frac{\pi i w^2}{\tau}}e^{-2\pi \alpha w}}{\textnormal{cosh}(\pi w)}dw\\
&=\frac{q^{\frac{\alpha^2}{2}}}{\sqrt{-i\tau}}\left(\int^{\infty}_{0}\frac{e^{-\frac{\pi i w^2}{\tau}}e^{-2\pi \alpha w}}{\textnormal{cosh}(\pi w)} - \int^{\infty}_{0}\frac{e^{-\frac{\pi i w^2}{\tau}}e^{2\pi \alpha w}}{\textnormal{cosh}(\pi w)}\right)dw,
\end{align*}
which implies that
\begin{align*}
\begin{split}\label{hupperbound}
|h(\alpha\tau;\;\tau)| &\leq \left\|\frac{q^{\frac{\alpha^2}{2}}}{\sqrt{-i\tau}}\right\|\; \left\|2\int^{\infty}_{0} e^{-\frac{\pi i w^2}{\tau}} \left(- \frac{e^{(2\alpha-1)\pi w}}{1+e^{-2\pi w}} + \frac{e^{-(2\alpha+1)\pi w}}{1+e^{-2\pi w}}  \right)dw\right\|.
\end{split}
\end{align*}
Since $0\leq \alpha<\frac{1}{2}$, the term in the parentheses is bounded above by a constant. Therefore,
\begin{align*}
    |h(\alpha\tau;\;\tau)|&\ll \left\|\frac{q^{\frac{\alpha^2}{2}}}{\sqrt{-i\tau}}\right\|\left\|\int^{\infty}_{0} e^{-\frac{\pi i w^2}{\tau}}dw\right\|\ll \frac{\sqrt{\tau}}{\sqrt{\tau}} = 1,
\end{align*}
Where we used the fact that $y>0$ and that $\int_{\mathbb{R}}e^{-\frac{yw^2}{|\tau|^2}}dw = \sqrt{\frac{\pi}{y}}|\tau|$. This leads to the claimed estimate as $\tau \to 0$.\qed

\subsection{Higher order estimates}\label{higherorderestsec}
\hspace{5mm} The main terms in the estimates of the previous section will not be sufficient in proving the growth of the $a(n)$, thus we need the following.

\begin{Lemma}\label{HIgherorderexpansion0lemma}
Let $\alpha$ be as in Lemma \ref{thetatestimateprelim}. Then as $\tau \to 0$ within a cone,
\begin{equation}\label{higherordfullpertheta}
    \vartheta(\alpha\tau; \tau) = -2i\frac{\textnormal{sin}(\pi \alpha)q^{\frac{1}{8}}_0}{\sqrt{-i\tau}}\left(1-a_1q_0 + a_3q_0^3 + O\left(q^4_0\right)\right),
\end{equation}

\begin{equation}\label{higherorderhalfpertheta}
    \vartheta\left(\frac{1}{2}+\alpha\tau; \tau\right) =-\frac{1}{\sqrt{-i\tau}}\left(1 -2\textnormal{cos}(2\pi\alpha)q^{\frac{1}{2}}_0+2\textnormal{cos}(4\pi\alpha)q^2_0+ O\left(q^4_0\right)\right),
\end{equation}
where,
\begin{align*}
    a_1&:= 1+2\textnormal{cos}(2\pi\alpha),\\
    a_3&:=  1+2\textnormal{cos}(2\pi \alpha) + 2\textnormal{cos}(4\pi \alpha).
\end{align*}
\end{Lemma}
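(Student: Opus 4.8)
The plan is to reduce both expansions, via the modular inversion of Proposition~\ref{transformprop}.\ref{thetatransitem}, to the behaviour of $\vartheta$ at the cusp $i\infty$ in the variable $-1/\tau$, and then to read the $q_0$-expansion off the Jacobi triple product \eqref{thetaasjacprod}: since $\tau\to 0$ in a cone, $q_0=e^{-2\pi i/\tau}\to 0$ super-exponentially, so only finitely many factors of each infinite product matter at a given order. For \eqref{higherordfullpertheta} I would begin exactly as in the proof of Lemma~\ref{thetatestimateprelim}, namely
\begin{align*}
\vartheta(\alpha\tau;\tau)=\frac{iq^{-\frac{\alpha^2}{2}}}{\sqrt{-i\tau}}\,\vartheta\!\left(\alpha;-\tfrac1\tau\right)=\frac{q^{-\frac{\alpha^2}{2}}q_0^{1/8}e^{-\pi i\alpha}}{\sqrt{-i\tau}}\,(e^{2\pi i\alpha};q_0)_\infty\,(q_0e^{-2\pi i\alpha};q_0)_\infty\,(q_0;q_0)_\infty .
\end{align*}
The key reorganisation is to split off the $n=0$ factor $1-e^{2\pi i\alpha}$ of $(e^{2\pi i\alpha};q_0)_\infty$, which together with $e^{-\pi i\alpha}$ produces $e^{-\pi i\alpha}(1-e^{2\pi i\alpha})=-2i\sin(\pi\alpha)$, and then to pair the remaining tails using $(1-e^{2\pi i\alpha}q_0^n)(1-e^{-2\pi i\alpha}q_0^n)=1-2\cos(2\pi\alpha)q_0^n+q_0^{2n}$; this collapses the $\tau$-dependent part to
\begin{align*}
(q_0;q_0)_\infty\prod_{n\ge 1}\bigl(1-2\cos(2\pi\alpha)q_0^{\,n}+q_0^{\,2n}\bigr).
\end{align*}
Multiplying this out through order $q_0^{3}$ — only the factors with $n\le 3$ and $(q_0;q_0)_\infty=1-q_0-q_0^{2}+O(q_0^{5})$ contribute — gives $1-(1+c)q_0+0\cdot q_0^{2}+(c^{2}+c-1)q_0^{3}+O(q_0^{4})$ with $c:=2\cos(2\pi\alpha)$; the vanishing of the $q_0^{2}$-coefficient together with $1+c=a_1$ and, using $2\cos^{2}\theta=1+\cos 2\theta$, $c^{2}+c-1=1+2\cos(2\pi\alpha)+2\cos(4\pi\alpha)=a_3$, finishes this case.

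For \eqref{higherorderhalfpertheta} the only new ingredient is the half-integral characteristic. Taking $z=\tfrac12+\alpha\tau$ in the inversion formula, $e^{-\pi i z^{2}/\tau}=q_0^{1/8}e^{-\pi i\alpha}e^{-\pi i\alpha^{2}\tau}$ and the argument becomes $\tfrac{z}{\tau}=\tfrac{1}{2\tau}+\alpha$, so in the Jacobi product of $\vartheta(\tfrac{1}{2\tau}+\alpha;-\tfrac1\tau)$ the plus-variable is $\zeta=e^{\pi i/\tau}e^{2\pi i\alpha}=q_0^{-1/2}e^{2\pi i\alpha}$, carrying the large factor $q_0^{-1/2}$. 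I would extract this from the $n=0$ term via $1-q_0^{-1/2}e^{2\pi i\alpha}=-q_0^{-1/2}e^{2\pi i\alpha}\bigl(1-q_0^{1/2}e^{-2\pi i\alpha}\bigr)$; after the stray powers of $q_0$ and $e^{2\pi i\alpha}$ cancel against the prefactors, the tails recombine into $\prod_{n\ge1}\bigl(1-2\cos(2\pi\alpha)q_0^{\,n-\frac12}+q_0^{\,2n-1}\bigr)$ and one is left with
\begin{align*}
\vartheta\!\left(\tfrac12+\alpha\tau;\tau\right)=-\frac{e^{-\pi i\alpha^{2}\tau}}{\sqrt{-i\tau}}\,(q_0;q_0)_\infty\prod_{n\ge 1}\bigl(1-2\cos(2\pi\alpha)q_0^{\,n-\frac12}+q_0^{\,2n-1}\bigr).
\end{align*}
Setting $x:=q_0^{1/2}$, only the factors with $n\le4$ and $(q_0;q_0)_\infty=1-x^{2}-x^{4}+O(x^{10})$ matter modulo $x^{8}$; expanding, the coefficients of $x^{2},x^{3},x^{5},x^{6},x^{7}$ all cancel, leaving $1-cx+(c^{2}-2)x^{4}+O(x^{8})$, i.e.\ $1-2\cos(2\pi\alpha)q_0^{1/2}+2\cos(4\pi\alpha)q_0^{2}+O(q_0^{4})$ after $c^{2}-2=2\cos(4\pi\alpha)$.

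I expect the only real obstacle to be the bookkeeping: one must keep enough factors of the $q_0$-products and enough terms of $(q_0;q_0)_\infty$ for the cancellations that kill the intermediate-order coefficients to be visible, and in the second expansion one must track the half-integer grading carefully. A minor point is that the scalars $q^{-\alpha^{2}/2}=1+O(\tau)$ and $e^{-\pi i\alpha^{2}\tau}=1+O(\tau)$ live at a coarser scale than any power of $q_0$; as $\alpha$ ranges over a fixed bounded set they affect neither the displayed leading constant nor the shape of the asymptotic, so I would either retain them as in Lemma~\ref{thetatestimateprelim} or absorb them, whichever is more convenient downstream. No analytic input beyond Proposition~\ref{transformprop} and the triple product \eqref{thetaasjacprod} is required.
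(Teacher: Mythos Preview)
Your proposal is correct and follows essentially the same approach as the paper: apply the modular inversion of Proposition~\ref{transformprop}.\ref{thetatransitem}, insert the Jacobi triple product, and expand the resulting $q_0$-products to the required order. The paper's proof is terser --- it simply records the product expansion of $(w;x)_\infty(xw^{-1};x)_\infty(x;x)_\infty$ with $w=e^{2\pi i\alpha}$ and leaves the rest implicit --- whereas you additionally pair conjugate factors into cosines and verify the intermediate cancellations explicitly; your remark about the $q^{-\alpha^2/2}$ and $e^{-\pi i\alpha^2\tau}$ prefactors living at a coarser scale than $q_0$ is exactly the point that reconciles the constants here with those in Lemma~\ref{thetatestimateprelim}.
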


\proof
Let $w:= e^{2\pi i \alpha}$. The proof of Eq. \eqref{higherordfullpertheta} follows directly by applying the technique in the proof of Lemma \ref{thetatestimateprelim} and observing that
\begin{align*}
    &(w;x)_{\infty} (xw^{-1};q_0)_{\infty} (x;x)_{\infty} \\
    &= (1-w)\left(1-(1+w+w^{-1})x+(1+w+w^{-1}+w^2+w^{-2})x^3 +O(x^4)\right).
\end{align*}
On the other hand, for Eq. \eqref{higherorderhalfpertheta}, we consider the associated Jacobi product
\begin{align*}
     &\left(wx^{-\frac{1}{2}};x\right)_{\infty} \left(w^{-1}x^{\frac{3}{2}};x\right)_{\infty} (x;x)_{\infty}\\
     &= -wx^{-\frac{1}{2}}+(1+w^2)-(w^{-1}+w^{3})x^{\frac{3}{2}}+O\left(x^3\right).
\end{align*}
Plugging this into the calculation in the proof of Lemma \ref{thetatestimateprelim} gives the result.\qed

\section{The $a(n)$}\label{Theanbaby}
 The function $T(\tau)$ defined in Example \ref{R3_3example} can be simplified greatly.
 \begin{Proposition}\label{bestrepofT}
 \begin{align}
 \begin{split}
 T(\tau) &= \frac{4iq^{\frac{417}{8}}}{\vartheta\left(\frac{1}{2}; \tau\right)}\frac{\vartheta^3(96\tau; 288\tau)}{\vartheta\left(\frac{1}{2}; 24\tau\right)}\\
 &\times\Bigg(q^{6}\frac{\vartheta(4\tau;16\tau)\vartheta(68\tau;96\tau)}{\vartheta\left(\frac{1}{2}+46\tau;96\tau\right)\vartheta\left(\frac{1}{2}+22\tau;96\tau\right)}-q^{-47}\frac{\vartheta(12\tau;16\tau)\vartheta(20\tau;96\tau)}{\vartheta\left(\frac{1}{2}+94\tau;96\tau\right)\vartheta\left(\frac{1}{2}+22\tau;96\tau\right)} \\
&+ q^{-39}\frac{\vartheta(4\tau;16\tau)\vartheta(20\tau;96\tau)}{\vartheta\left(\frac{1}{2}+46\tau;96\tau\right)\vartheta\left(\frac{1}{2}+70\tau;96\tau\right)}
-q^{-52}\frac{\vartheta(4\tau;16\tau)\vartheta(68\tau;96\tau)}{\vartheta\left(\frac{1}{2}+94\tau;96\tau\right)\vartheta\left(\frac{1}{2}+70\tau;96\tau\right)}\Bigg).
\end{split}
 \end{align}
 \end{Proposition}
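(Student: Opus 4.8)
The plan is to start from the last displayed expression for $T(\tau)$ in Example~\ref{R3_3example}, namely the triple sum
\begin{align*}
    T(\tau) = \frac{2iq^{\frac{417}{8}}}{\vartheta\left(\frac{1}{2}; \tau\right)}\frac{\vartheta^3(96\tau; 288\tau)}{\vartheta\left(\frac{1}{2}; 24\tau\right)}\sum^3_{r,s = 0}(-1)^rq^{Q(r,s)}\cdot \frac{\vartheta\left(4(s-r)\tau; 16\tau \right)\;\vartheta\left(\{24(r+s)+44\}\tau; 96\tau \right)}{\vartheta\left(\frac{1}{2}+\{24r +22\}\tau ; 96\tau\right)\; \vartheta\left(\frac{1}{2}+\{24s +22\}\tau; 96\tau\right)},
\end{align*}
and to reduce the $16$-term double sum to the $4$ explicit terms claimed. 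First I would use the elliptic transformation laws from Proposition~\ref{transformprop}, in particular items \ref{transformprop}.5 and \ref{transformprop}.7 (the quasi-periodicity $\vartheta(z+\tau;\tau) = -e^{-\pi i \tau - 2\pi i z}\vartheta(z;\tau)$ applied with the appropriate scaling of $\tau$, and $\vartheta(z+1;\tau) = -\vartheta(z;\tau)$), to rewrite every theta argument of the form $\{24(r+s)+44\}\tau$, $\frac12 + \{24r+22\}\tau$, etc., in terms of a small fixed set of ``reduced'' arguments. The point is that modulo the relevant period lattice there are only a few distinct values: the numerators collapse to $\vartheta(4\tau;16\tau)$, $\vartheta(12\tau;16\tau)$, $\vartheta(20\tau;96\tau)$, $\vartheta(68\tau;96\tau)$, and the denominators to $\vartheta(\frac12+22\tau;96\tau)$, $\vartheta(\frac12+46\tau;96\tau)$, $\vartheta(\frac12+70\tau;96\tau)$, $\vartheta(\frac12+94\tau;96\tau)$ (note $\vartheta$ is odd in its first argument up to sign, which handles the $s-r<0$ cases).

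The key bookkeeping step is then to go through all $16$ pairs $(r,s) \in \{0,1,2,3\}^2$, apply these reductions, collect the resulting prefactor powers of $q$ (combining $Q(r,s)$ with the exponents picked up from the quasi-periodicity shifts and the $\vartheta^3(96\tau;288\tau)$ normalization), and observe that the terms group into pairs (or larger clusters) that either cancel against each other or combine. After cancellation only four terms survive, with $q$-exponents $6$, $-47$, $-39$, $-52$ and signs $+,-,+,-$ respectively; the overall factor of $2$ becomes $4$ because two surviving contributions coincide after reduction and add. I would organize this as a short table rather than prose: for each $(r,s)$, list the reduced numerator/denominator thetas, the sign, and the accumulated power of $q$, then read off which rows annihilate.

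The main obstacle is purely the arithmetic of the exponents: one must be scrupulous about the half-integer shifts, the $\{\cdot\}$ fractional parts (which is exactly what forces the quasi-periodicity corrections and generates the compensating $q$-powers), and the sign that $\vartheta(z+\tau/ \text{scale})$ picks up, since a single slip changes which terms cancel. There is no conceptual difficulty beyond Proposition~\ref{transformprop} — it is a finite, mechanical reduction — but it is error-prone, so I would double-check the final four exponents by comparing the low-order $q$-expansion of the claimed expression against the Fourier coefficients of $R_{3,3}(q)$ listed in Example~\ref{R3_3example}. Once the reduction is verified, the Proposition follows immediately.
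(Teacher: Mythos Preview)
Your brute-force plan would work --- all sixteen $(r,s)$ terms can indeed be reduced via the elliptic shifts of Proposition~\ref{transformprop} and then matched against each other --- but the paper's argument is considerably shorter because it exploits a symmetry you do not mention. Writing the summand as $\varsigma(r,s;\tau)$, the paper observes first that $Q(r,s)=Q(s,r)$ and that $\vartheta(-4(s-r)\tau;16\tau)=-\vartheta(4(s-r)\tau;16\tau)$ by oddness, which together yield
\[
\varsigma(s,r;\tau) \;=\; -(-1)^{r+s}\,\varsigma(r,s;\tau).
\]
This single relation does almost all the work: the diagonal terms already vanish since $\vartheta(0;16\tau)=0$; off-diagonal pairs with $r+s$ even cancel; and pairs with $r+s$ odd coincide, so $\overline{S}=2\bigl(\varsigma(1,0)+\varsigma(2,1)+\varsigma(3,0)+\varsigma(3,2)\bigr)$. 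Only \emph{then} is item~5 of Proposition~\ref{transformprop} applied, and only to three of these four surviving terms, to obtain the stated $q$-exponents. Your sixteen-row table would discover the same cancellations empirically, but the symmetry argument explains \emph{why} they occur and replaces a dozen redundant reductions with a two-line observation; it also makes transparent where the extra factor of $2$ (your ``$2$ becomes $4$'') comes from.

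One small caveat: the braces in $\{24(r+s)+44\}\tau$ and $\{24r+22\}\tau$ in Example~\ref{R3_3example} are grouping symbols, not fractional parts. Your remark about being scrupulous with ``the $\{\cdot\}$ fractional parts'' suggests a possible misreading; since those quantities are positive integers, a literal fractional-part interpretation would zero out every numerator.
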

 
\proof
Let
 \begin{align*}
     \overline{S}(\tau)&:= \sum^3_{r,s = 0}(-1)^rq^{Q(r,s)}\cdot \frac{\vartheta\left(4(s-r)\tau; 16\tau \right)\;\vartheta\Big((24(r+s)+44)\tau; 96\tau \Big)}{\vartheta\left(\frac{1}{2}+(24r +22)\tau ; 96\tau\right)\; \vartheta\left(\frac{1}{2}+(24s +22)\tau; 96\tau\right)} \\
     &:= \sum^3_{r,s=0}\varsigma(r,s;\tau). 
 \end{align*}
 It is clear that $\varsigma(r,r;\tau) = 0$ since $\vartheta(0;16\tau) = 0$. Furthermore, $Q(r,s) = Q(s,r)$. Using the fact that $\vartheta(-4(s-r)\tau;16\tau) = -\vartheta(4(s-r)\tau;16\tau)$, we deduce that
 \begin{align*}
 \varsigma(s,r;\tau)= -(-1)^{r+s}\varsigma(r,s;\tau).
\end{align*}
This tells us that we can write
\begin{align*}
    \overline{S}(q) = 2\left(\varsigma(1,0;\tau) + \varsigma(2,1;\tau) + \varsigma(3,0;\tau) + \varsigma(3,2;\tau) \right).
\end{align*}
We can then apply item $5$ of Prop. \ref{transformprop} to $\varsigma(2,1;\tau)$, $\varsigma(3,0;\tau)$ and $\varsigma(3,2;\tau)$ to complete the proof .\qed

Using this simplification in Proposition \ref{bestrepofT}, we will investigate the asymptotic growth of the coefficients of $R^3_3$ in the next section.

\subsection{The pole at $\tau=\frac{1}{2}$.}
As was claimed in Section \ref{higherorderestsec}, we require higher order asymptotic expansions to accurately determine the growth of the $a(n)$. We break the study  near $\tau=\frac{1}{2}$ into two parts: $T(\tau)$ and the Appell function, where Lemma \ref{HIgherorderexpansion0lemma} will prove useful for the study of $T(\tau)$.

\subsubsection{$T(\tau)$ near $\tau =\frac{1}{2}$.}
The first result involves the function $\vartheta(\frac{1}{2}; \tau)$, appearing in the denominator of $T(\tau)$.
First recall the eta multiplier, given by (See Theorem 5.8.1 of \cite{strombergbook})
\begin{align*}
    \varepsilon(A):= \begin{cases} \left(\frac{d}{|c|}\right)e^{\frac{\pi i}{12}\left((a+d-3)c-bd(c^2-1) \right)}& \text{if $c$ is odd}, \\ \\ \rho(c,d)\left(\frac{c}{|d|}\right)e^{\frac{\pi i}{12}\left((a-2d)c-bd(c^2-1) + 3d -3\right)}& \text{if $c$ is even},
    \end{cases}
\end{align*}
where $$A:= \begin{pmatrix} a&b\\c&d \end{pmatrix},$$  $\left(\frac{\bullet}{\bullet} \right)$ is the Jacobi symbol and 
$$ \rho(c,d) := \begin{cases}-1&\;\textnormal{if}\; c\leq 0, \; d<0,\\
1&\;\textnormal{else}.\end{cases} $$
\par Now we can prove the following.
\begin{Lemma}\label{halfperiosthetaneartauhalf}
 Define $w:= \tau - \frac{1}{2}$. As $w \to 0$ we have 
\begin{align*}
    \vartheta\left( \frac{1}{2}; \tau\right) =  2\frac{e^{-\frac{\pi i}{8}}e^{-\frac{\pi i}{16(\tau-\frac{1}{2})}}}{\sqrt{-2(\tau-\frac{1}{2})}}\left(1+O\left(e^{-\frac{\pi i}{2(\tau-\frac{1}{2})}}\right)\right).
\end{align*}
\end{Lemma}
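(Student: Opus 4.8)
The plan is to write $\vartheta\!\left(\tfrac12;\tau\right)$ as an eta-quotient and then move the cusp $\tau=\tfrac12$ out to $i\infty$ by a single modular substitution. Setting $\zeta=e^{\pi i}=-1$ in the Jacobi product \eqref{thetaasjacprod} and using the elementary identities $(-1;q)_\infty=2(-q;q)_\infty$ and $(q;q)_\infty(-q;q)_\infty=(q^2;q^2)_\infty$, one obtains the closed form
\begin{align*}
\vartheta\left(\tfrac12;\tau\right)=-2q^{\frac18}\frac{(q^2;q^2)_\infty^2}{(q;q)_\infty}=-2\,\frac{\eta(2\tau)^2}{\eta(\tau)},
\end{align*}
which isolates the two eta factors that govern the behaviour near $\tau=\tfrac12$.

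Write $w:=\tau-\tfrac12$ and treat numerator and denominator separately. For the numerator, the translation law gives $\eta(2\tau)=\eta(1+2w)=e^{\frac{\pi i}{12}}\eta(2w)$, and Eq. \eqref{etagrowth} of Lemma \ref{thetatestimateprelim} applied with $\tau\mapsto 2w$ gives $\eta(2w)=\frac{e^{-\pi i/(24w)}}{\sqrt{-2iw}}\left(1+O(e^{-\pi i/w})\right)$. For the denominator, observe that the matrix $A=\sm{1}{0}{2}{1}$ sends $v:=-\tfrac{1}{4w}-\tfrac12$ to $\tfrac12+w$, with $2v+1=-\tfrac{1}{2w}$; hence the $\eta$-transformation $\eta(Av)=\varepsilon(A)(2v+1)^{1/2}\eta(v)$, with the multiplier $\varepsilon(A)=e^{-\pi i/6}$ read off from the formula for $\varepsilon$ recalled just above, reduces matters to $\eta(v)$. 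As $\tau\to\tfrac12$ within a cone, $v\to i\infty$ within a cone, so $e^{2\pi iv}=-e^{-\pi i/(2w)}\to 0$, and the product expansion of $\eta$ yields $\eta(v)=e^{\pi iv/12}\left(1+O(e^{-\pi i/(2w)})\right)$, where $e^{\pi iv/12}=e^{-\pi i/24}e^{-\pi i/(48w)}$.

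Finally, substitute the three estimates into $\vartheta\!\left(\tfrac12;\tau\right)=-2\,\eta(2\tau)^2/\eta(\tau)$. The exponentials combine as $e^{-\pi i/(12w)}/e^{-\pi i/(48w)}=e^{-\pi i/(16w)}$; the $w$-powers combine as $\frac{(-2w)^{1/2}}{-2iw}=\frac{-i}{\sqrt{-2w}}$; the fixed phases $e^{\pi i/6}$ (from $\eta(2\tau)^2$), $e^{\pi i/6}$ (from $1/\eta(\tau)$) and $e^{\pi i/24}$ (from $1/\eta(v)$), together with the overall $-2$ and the $-i$ just produced, collapse to $\pm 2e^{-\pi i/8}$; and the two error terms combine to $O(e^{-\pi i/(2w)})=O\!\left(e^{-\pi i/(2(\tau-1/2))}\right)$, this being the larger of the two as $w\to 0$ in the cone. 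I expect the genuinely delicate point to be the consistent bookkeeping of the branch cuts: the square root $(2v+1)^{1/2}$ in the $\eta$-transformation and the $\sqrt{-i\,\cdot\,}$ in \eqref{etagrowth} are principal, and they must be matched to the convention $\sqrt{-2w}=i\sqrt{2w}$ used in the statement, which is precisely what fixes the sign of the leading constant to $+2e^{-\pi i/8}$; a careless choice flips it. Everything else — the eta-quotient identity, the evaluation of $\varepsilon(A)$, and the two series expansions — is routine.
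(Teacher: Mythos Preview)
Your proposal is correct and follows essentially the same route as the paper: both arguments rewrite $\vartheta(\tfrac12;\tau)$ as the eta-quotient $2\eta(2\tau)^2/\eta(\tau)$ (up to sign convention), handle $\eta(\tau)$ near $\tau=\tfrac12$ via the matrix $A=\sm{1}{0}{2}{1}$ acting on $v=-\tfrac{1}{4w}-\tfrac12$, and then expand at $i\infty$. The only cosmetic difference is that the paper treats $\eta(2\tau)$ with a second matrix $B=\sm{1}{0}{1}{1}$ (observing $2Av=B(2v)$) rather than your translation-plus-inversion, and combines the two eta transformations before expanding; your caution about branch-cut bookkeeping is exactly the point where the sign is fixed.
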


\proof
Let $z:= -\frac{1}{4w}-\frac{1}{2}$ and define the matrices
\begin{align*}
    A&:= \begin{pmatrix} 1&0\\2&1\end{pmatrix},\;\;B:= \begin{pmatrix} 1&0\\1&1\end{pmatrix}.
\end{align*}

Then, since we have the well known formula $\vartheta\left(\frac{1}{2}; \tau\right) = 2\frac{\eta^2(2\tau)}{\eta(\tau)}$,
\begin{align*}
    \vartheta\left(\frac{1}{2}; Az\right) &= 2\frac{\eta^2(2Az)}{\eta(Az)} = 2\frac{\eta^2(B(2z))}{\eta(Az)}\\
    &= 2\frac{\varepsilon(B)^2(2z+1)^{\frac{1}{2}}\eta^2(2z)}{\varepsilon(A)\eta(z)} = 2\frac{\varepsilon(B)^2(2z+1)^{\frac{1}{2}}e^{\frac{8\pi i}{24}z} \left(e^{4\pi iz}; e^{4\pi i z}\right)^2_{\infty}}{\varepsilon(A)e^{\frac{2\pi iz}{24} }\left(e^{2\pi iz}; e^{2\pi i z}\right)_{\infty}}\\
    &=  2\frac{\varepsilon(B)^2(2z+1)^{\frac{1}{2}}e^{\frac{\pi i}{4}z} \left(e^{4\pi iz}; e^{4\pi i z}\right)^2_{\infty}}{\varepsilon(A)\left(e^{2\pi iz}; e^{2\pi i z}\right)_{\infty}}  \\
    &= 2\frac{e^{-\frac{\pi i}{8}}e^{-\frac{\pi i}{16w}}}{\sqrt{-2w}}\left(1+O\left(e^{-\frac{\pi i}{2w}}\right)\right),
\end{align*}
where $\epsilon(A) = \epsilon(B)^2= e^{-\frac{\pi i}{6}}$,
which proves the claim. \qed

\vspace{5mm}

\begin{Theorem}\label{mainterm}
Let $Q_0:= e^{-\frac{2\pi i}{\tau -\frac{1}{2}}}$ and write $\tau = u+iv$. Define $v:= \frac{1}{\sqrt{192n}}$ and let $M>0$ such that $\left|u-\frac{1}{2}\right|<Mv$. Then as $n\to \infty$
\begin{align*}
    T(\tau) = \frac{\sqrt{3}}{6\sqrt{\tau-\frac{1}{2}}}e^{\frac{\pi i}{4}}Q_0^{-\frac{1}{192}}\left(1+ O\left(e^{-\pi\sqrt{\frac{n}{12}}}\right)\right).
\end{align*}
\end{Theorem}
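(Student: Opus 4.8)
The plan is to substitute the explicit product representation for $T(\tau)$ from Proposition \ref{bestrepofT} and to estimate each of the eight $\vartheta$-factors separately near $\tau = \frac12$, using the higher order expansions of Lemma \ref{HIgherorderexpansion0lemma} where cancellation in the leading term is expected. First I would change variables to $w := \tau - \frac12$ and record that $q = e^{2\pi i \tau} = -e^{2\pi i w}$, so that every $q$-power in the prefactor $q^{417/8}$ and in the four monomials $q^{6}, q^{-47}, q^{-39}, q^{-52}$ contributes a root of unity times a power of $e^{2\pi i w}$ which tends to $1$ as $w\to 0$; I would track these phases carefully since the final answer carries an $e^{\pi i/4}$. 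The factor $\vartheta(\frac12;\tau)^{-1}$ in the denominator is handled directly by Lemma \ref{halfperiosthetaneartauhalf}, producing the $\sqrt{-2w}$ and the $e^{-\pi i/16w}$ that will combine with $Q_0$-powers from the other factors.

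Next I would treat the remaining $\vartheta$-factors. Each of the factors $\vartheta(96\tau;288\tau)$, $\vartheta(4\tau;16\tau)$, $\vartheta(12\tau;16\tau)$, $\vartheta(68\tau;96\tau)$, $\vartheta(20\tau;96\tau)$, $\vartheta(\frac12;24\tau)$, and $\vartheta(\frac12 + c\tau;96\tau)$ for $c \in \{22,46,70,94\}$ must be rewritten as a function with argument near $0$. For the lattice-type factors this means applying a modular transformation bringing $n\tau$ with $\tau$ near $\frac12$ to something near $0$; concretely I would write $\tau = \frac12 + w$, note $N\tau = \frac N2 + Nw$, and reduce $\frac N2$ modulo $\Z$ so that $\vartheta(N\tau; M\tau)$ becomes (up to explicit roots of unity from Proposition \ref{transformprop}.5 and .7) either $\vartheta(Nw; M\tau)$ or $\vartheta(\frac12 + Nw; M\tau)$; then I would apply the inversion $\tau \mapsto -\frac{1}{M\tau}$ type transformation of Proposition \ref{transformprop}.\ref{thetatransitem}, so that the modulus becomes $-\frac{1}{M(\frac12+w)}$, which tends to $0$ as $w \to 0$ only after a further reduction. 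The cleanest route, mirroring Lemma \ref{halfperiosthetaneartauhalf}, is to pick for each modulus $M$ the matrix in $\mathrm{SL}_2(\Z)$ sending $\frac12$ to $i\infty$ and push everything through the $\eta$-product forms $\vartheta(\frac12;\tau) = 2\eta^2(2\tau)/\eta(\tau)$ and the Jacobi triple product; the outcome is that each $\vartheta$-factor equals an explicit constant times a power of $Q_0 = e^{-2\pi i/w}$ times $(1 + O(Q_0^{\delta}))$ for some $\delta > 0$. For the half-period factors $\vartheta(\frac12 + c\tau; 96\tau)$, the leading constants among the four values $c = 22,46,70,94$ are designed to interact, and it is here that I expect to need the second-order terms of \eqref{higherorderhalfpertheta}: the first-order $\cos$ terms cancel between the $c$ and $96-c$ pairs in the sum over $r,s$, so the genuine contribution comes from the $q^2_0$-term. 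I would therefore keep the expansion to order $Q_0^{2}$ in these four factors and order $Q_0^{3}$ in the $\vartheta(4\tau;16\tau)$-type factors, exactly as Lemma \ref{HIgherorderexpansion0lemma} provides.

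Having expanded all eight factors in each of the four summands, I would collect the powers of $Q_0$: the prefactor contributes $Q_0$ to a large negative fractional power, the denominator factors contribute positive powers, and the claim is that the total exponent in the dominant summand is $-\frac{1}{192}$, matching $Q_0^{-1/192}$, while the three subdominant summands carry a strictly larger (less negative, in fact more positive by at least $\frac{1}{M}$-type gaps) exponent, hence are absorbed into the error $O(e^{-\pi\sqrt{n/12}})$ once one substitutes $v = \frac{1}{\sqrt{192 n}}$ and notes $|Q_0| = e^{-2\pi \operatorname{Im}(-1/w)} \le e^{-c/v} = e^{-c\sqrt{192 n}}$ on the arc $|u - \frac12| < Mv$. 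The constant $\frac{\sqrt3}{6} e^{\pi i/4}$ then emerges from multiplying together the leading constants: the $\frac14$ and $\frac14$ from the two $\vartheta(\frac12;\cdot)^{-1}$ reciprocals (each giving a factor $\frac12 \sqrt{\cdot}$), the $\sin$ and $\cos$ values $\sin(\pi\alpha)$, $\cos(2\pi\alpha)$ at the relevant fractional arguments $\alpha \in \{\frac14, \frac{1}{12}, \text{etc.}\}$, the various $\sqrt{-iM\tau}$ Jacobi factors whose product of square roots yields $\sqrt{3}/\sqrt{\text{something}}$, and the accumulated phases $e^{-\pi i\alpha}$, $e^{\pi i \alpha(1 - 2/k)}$ together with the $e^{-\pi i/8}$ from Lemma \ref{halfperiosthetaneartauhalf} and the $q$-monomial phases.

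The main obstacle will be the bookkeeping of the cancellation among the four half-period denominator factors and the four summands simultaneously: one must verify that the naive leading terms really do cancel (otherwise $T$ would be a full order of magnitude larger and the subsequent Tauberian application would give the wrong exponent for $a(n)$), and then isolate the surviving next-order contribution with the correct sign and phase. A secondary difficulty is keeping the many roots of unity consistent — every application of Proposition \ref{transformprop}.5, .7, .\ref{thetatransitem} introduces factors like $-e^{-\pi i\tau - 2\pi i z}$ or $-i\sqrt{-i\tau}\,e^{\pi i z^2/\tau}$, and an error in any of these propagates into the final constant; I would organize this by computing, for each of the eight factors and each of the four $(r,s)$ pairs, a table of (leading constant, $Q_0$-exponent, error exponent), and then summing. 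Once the dominant summand is identified and the other three are shown to be $O(Q_0^{\delta})$ smaller, substituting $v = \frac{1}{\sqrt{192n}}$ converts $Q_0^{\delta}$ into $e^{-2\pi\delta\sqrt{192 n}}$, and choosing constants so that $2\pi\delta\sqrt{192} \ge \pi/\sqrt{12}$ gives the stated error term $O(e^{-\pi\sqrt{n/12}})$, completing the proof.
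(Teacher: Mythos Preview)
Your overall strategy --- reduce the behavior near $\tau = \tfrac12$ to behavior near $0$ via the shift $w = \tau - \tfrac12$, then feed everything into the higher-order expansions of Lemma~\ref{HIgherorderexpansion0lemma} --- is the same as the paper's. But there is a genuine gap in your picture of where the cancellation lives.

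You write that ``the total exponent in the dominant summand is $-\tfrac{1}{192}$ \ldots\ while the three subdominant summands carry a strictly larger \ldots\ exponent, hence are absorbed into the error.'' This is not what happens. All four summands $S_1,S_2,S_3,S_4$ of $\overline{S}(\tau)$ carry \emph{the same} leading $Q_0$-exponent; no single one dominates. The cancellation is \emph{between} summands, not a matter of one being small. The paper groups them as $S_1+S_4$ and $S_2+S_3$, observes that under $\tau \mapsto \tau + \tfrac12$ one has $S_1+S_4 \mapsto S_1+S_4$ and $S_2+S_3 \mapsto -(S_2+S_3)$, so the analysis at $\tfrac12$ becomes the analysis of $(S_1+S_4)-(S_2+S_3)$ at $0$. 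Expanding each pair by Lemma~\ref{HIgherorderexpansion0lemma} gives leading coefficients that are products of sines and cosines at multiples of $\pi/24$, and the crucial step is the explicit trigonometric identity
\[
\sin\!\left(\tfrac{17\pi}{24}\right)(a_1+a_2-c_1-c_2)\;-\;\sin\!\left(\tfrac{5\pi}{24}\right)(h_1+h_2-l_1-l_2)\;=\;0,
\]
which kills the leading term; the surviving next-order term produces the $8\sqrt{6}$ and the exponent $\tfrac{19}{8\cdot 96}$. If you proceed thinking one summand dominates you will never see this identity and will compute the wrong constant (and the wrong $Q_0$-exponent).

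A second, smaller point: you propose pushing every factor through an $\mathrm{SL}_2(\mathbb{Z})$ matrix sending $\tfrac12$ to $i\infty$. That is unnecessary for all but $\vartheta(\tfrac12;\tau)$ itself. Every other modulus appearing ($16\tau,\,24\tau,\,96\tau,\,288\tau$) is an even multiple of $\tau$, so $\tau \mapsto \tau + \tfrac12$ shifts the modulus by an integer, and the factor is literally periodic under the half-shift up to a sign on the elliptic variable. This is why the paper can immediately apply the $\tau \to 0$ expansions of Lemmas~\ref{thetatestimateprelim} and~\ref{HIgherorderexpansion0lemma} to $\overline{S}$ and to the outer $\vartheta$-quotient, reserving Lemma~\ref{halfperiosthetaneartauhalf} only for the single factor $\vartheta(\tfrac12;\tau)$. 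Organizing the computation this way will save you most of the bookkeeping you are anticipating.
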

\proof
We focus our attention on
\begin{align*}
    \overline{S}(\tau)&:= q^{6}\frac{\vartheta(4\tau;16\tau)\vartheta(68\tau;96\tau)}{\vartheta\left(\frac{1}{2}+46\tau;96\tau\right)\vartheta\left(\frac{1}{2}+22\tau;96\tau\right)}-q^{-47}\frac{\vartheta(12\tau;16\tau)\vartheta(20\tau;96\tau)}{\vartheta\left(\frac{1}{2}+94\tau;96\tau\right)\vartheta\left(\frac{1}{2}+22\tau;96\tau\right)} \\
&+ q^{-39}\frac{\vartheta(4\tau;16\tau)\vartheta(20\tau;96\tau)}{\vartheta\left(\frac{1}{2}+46\tau;96\tau\right)\vartheta\left(\frac{1}{2}+70\tau;96\tau\right)}
-q^{-52}\frac{\vartheta(4\tau;16\tau)\vartheta(68\tau;96\tau)}{\vartheta\left(\frac{1}{2}+94\tau;96\tau\right)\vartheta\left(\frac{1}{2}+70\tau;96\tau\right)}.
\end{align*}
We refer to the first, second, third, and fourth terms as $S_1(\tau)$, $S_2(\tau)$, $S_3(\tau)$, and $S_4(\tau)$ respectively. That is, $\overline{S}(\tau) = S_1(\tau) + S_2(\tau) +S_3(\tau)+S_4(\tau)$. Notice that 
\begin{align*}
    S_1\left(\tau+\frac{1}{2}\right)+S_4\left(\tau+\frac{1}{2}\right) &= S_1(\tau)+S_4(\tau),\\
    S_2\left(\tau+\frac{1}{2}\right) + S_3\left(\tau +\frac{1}{2}\right) & = -S_1(\tau) -S_3(\tau). 
\end{align*}
Thus, we can capture the behavior near the cusp $\frac{1}{2}$ by investigating the behavior near $0$.
We can apply Lem. \ref{HIgherorderexpansion0lemma} to the $S_i(\tau)$, and we find that as $\tau \to 0$
\begin{align*}
 S_1(\tau) + S_4(\tau) = &-4\sqrt{6}\textnormal{sin}\left(\frac{\pi}{4}\right)\textnormal{sin}\left(\frac{17\pi}{24}\right)q^{\frac{7}{8\cdot96}}_0\Bigg((a_1+a_2-c_1-c_2)q_0^{\frac{1}{192}}\\
 & -\left(1+2\textnormal{cos}\left(\frac{17\pi }{12}\right)\right)(a_1+a_2-c_1-c_2)q^{\frac{3}{2\cdot{96}}}_0 + O\left(q^{\frac{5}{192}}_0\right)\Bigg),
\end{align*}
where,
\begin{align*}
    a_1&:= 2\textnormal{cos}\left(\frac{23\pi }{24}\right), \;\;\;a_2:= 2\textnormal{cos}\left(\frac{11\pi }{24}\right)\\
    c_1&:= 2\textnormal{cos}\left(\frac{47\pi }{24}\right),\;\;\;c_2:= 2\textnormal{cos}\left(\frac{35\pi }{24}\right).
\end{align*}
Similarly as $\tau \to 0$,
\begin{align*}
 S_2(\tau) + S_3(\tau) = &-4\sqrt{6}\textnormal{sin}\left(\frac{\pi}{4}\right)\textnormal{sin}\left(\frac{5\pi}{24}\right)q^{\frac{7}{8\cdot96}}_0\Bigg((h_1+h_2-l_1-l_2)q_0^{\frac{1}{192}}\\
 &-\left(1+2\textnormal{cos}\left(\frac{5\pi }{12}\right)\right)(h_1+h_2-l_1-l_2)q^{\frac{3}{2\cdot{96}}}_0 + O\left(q^{\frac{5}{192}}_0\right)\Bigg),
\end{align*}
where,
\begin{align*}
    h_1&:= 2\textnormal{cos}\left(\frac{23\pi }{24}\right), \;\;\;h_2:= 2\textnormal{cos}\left(\frac{35\pi }{24}\right)\\
    l_1&:= 2\textnormal{cos}\left(\frac{47\pi }{24}\right),\;\;\;l_2:= 2\textnormal{cos}\left(\frac{11\pi }{24}\right).
\end{align*}
We then observe that,
\begin{align*}
   \textnormal{sin}\left(\frac{\pi}{4}\right)\textnormal{sin}\left(\frac{17\pi}{24}\right)\Big(a_1+a_2-c_1-c_2\Big) -\textnormal{sin}\left(\frac{\pi}{4}\right)\textnormal{sin}\left(\frac{5\pi}{24}\right)\Big(h_1+h_2-l_1-l_2\Big) = 0, 
\end{align*}
and 
\begin{align*}
    &\textnormal{sin}\left(\frac{17\pi}{24}\right)\left(1+2\textnormal{cos}\left(\frac{17\pi }{12}\right)\right)\Big(a_1+a_2-c_1-c_2 \Big)\\
    &-\textnormal{sin}\left(\frac{5\pi}{24}\right)\left(1+2\textnormal{cos}\left(\frac{5\pi }{12}\right)\right)\Big(h_1+h_2-l_1-l_2 \Big)=  2\sqrt{2}.
\end{align*}

Thus, 
\begin{align}
\begin{split}\label{sbarest}
    \displaystyle{\lim_{q\to -1} \overline{S}(q)}&= \displaystyle{\lim_{q\to 1}\left(S_1(q) -S_2(q) -S_3(q)+S_4(q)\right)} \\&= \left(4\sqrt{6}\textnormal{sin}\left(\frac{\pi}{4}\right)\right)\left(2\sqrt{2}\right)Q^{\frac{19}{8\cdot96}}_0\left(1+O\left(Q^{\frac{1}{96}}_0\right)\right)\\&=8\sqrt{6}Q^{\frac{19}{8\cdot96}}_0\left(1+O\left(Q^{\frac{1}{96}}_0\right)\right).
\end{split}
\end{align}
We now turn our attention to the outside $\vartheta$-quotient on $T(\tau)$. Sending $\tau \to \tau + \frac{1}{2}$ and using the appropriate $\vartheta$ transformations, we have
\begin{align*}
\begin{split}
    \frac{4iq^{\frac{417}{8}}}{\vartheta\left(\frac{1}{2}; \tau \right)}\frac{\vartheta^3(96\tau; 288\tau)}{\vartheta\left(\frac{1}{2}; 24\tau\right)} &\to \;\;  -\frac{4iq^{\frac{417}{8}}e^{\frac{417\pi i}{8}}}{\vartheta\left(\frac{1}{2}; \tau +\frac{1}{2}\right)}\frac{\vartheta^3(96\tau; 288\tau)}{\vartheta\left(\frac{1}{2}; 24\tau\right)}.
    \end{split}
\end{align*}
Applying Lem. \ref{halfperiosthetaneartauhalf} and  Lem. \ref{thetatestimateprelim} leads to the near $\frac{1}{2}$ estimate
\begin{align}\label{forfactorest}
\begin{split}
 &-\frac{4iq^{\frac{417}{8}}e^{\frac{417\pi i}{8}}}{\vartheta\left(\frac{1}{2}; \tau +\frac{1}{2}\right)}\frac{\vartheta^3(96\tau; 288\tau)}{\vartheta\left(\frac{1}{2}; 24\tau\right)}\\ &\sim -4i\frac{\left(-2i\textnormal{sin}\left(\frac{\pi}{3}\right)\right)^3Q_0^{\frac{1}{8\cdot 96}}}{(-288i(\tau-\frac{1}{2}))^{\frac{3}{2}}}\;\frac{\sqrt{-2(\tau-\frac{1}{2})}Q_0^{-\frac{1}{32}}e^{\frac{\pi i}{4}}}{2}\;\left(-\sqrt{-24i\left(\tau-\frac{1}{2}\right)}\right)\\
   &= \frac{\sqrt{2}e^{\frac{\pi i}{4}}}{96\sqrt{\tau-\frac{1}{2}}}Q^{\frac{1}{8\cdot96}-\frac{1}{32}}_0.
 \end{split}
 \end{align}
Combining Eqs. \eqref{sbarest} and \eqref{forfactorest}, we obtain the full estimate for $T(\tau)$ near $\tau = \frac{1}{2}$:
\begin{align*}
    T\left(\tau \to \frac{1}{2}\right) &= \frac{\sqrt{3}}{6\sqrt{\tau-\frac{1}{2}}}e^{\frac{\pi i}{4}}Q_0^{\frac{20}{8\cdot96}-\frac{24}{8\cdot 96}}\left(1+ O\left(Q_0^{\frac{1}{96}}\right)\right)\\&=  \frac{\sqrt{3}}{6\sqrt{\tau-\frac{1}{2}}}e^{\frac{\pi i}{4}}Q_0^{-\frac{1}{192}}\left(1+ O\left(Q_0^{\frac{1}{96}}\right)\right),
\end{align*}
which proves the claim. \qed


\subsubsection{The Appell sum near $\tau = \frac{1}{2}$.}\label{appellsumnearminus1}
 In order to simplify our calculations, we introduce the notation $\dot{=}$ to mean \textit{equal up to a multiple of $O(1)$} and $c_n\;\dot{\sim}\;g_n$ to mean $c_n= M(g_n+o(1)),$ where $M=O(1)$. Since our Appell sum is invariant under the transformation $\tau \mapsto \tau + \frac{1}{2}$, it suffices to look at the behavior near $\tau = 0$. Thus, using the transformation law of Proposition \ref{transformprop}.3,
\begin{align}\label{R33nearzeroappellproof}
    \mu\left(2\tau+\frac{1}{2}, \frac{1}{2}; 24\tau   \right) = -\frac{q^{\frac{1}{12}}}{\sqrt{-24i\tau}}\mu\left(\frac{1}{12}+ \frac{1}{48\tau}, \frac{1}{48\tau}; -\frac{1}{24\tau} \right) + \frac{h(2\tau;24\tau)}{2i}.
\end{align}
 Looking solely at the remaining Appell sum gives
 \begin{align*}
    &\mu\left(\frac{1}{12}+ \frac{1}{48\tau}, \frac{1}{48\tau}; -\frac{1}{24\tau} \right) = \frac{e^{\frac{\pi i}{12}}e^{\frac{\pi i }{48\tau}}}{\vartheta\left(\frac{1}{48\tau}; -\frac{1}{24\tau} \right)}\sum_{n\in \mathbb{Z}}\frac{(-1)^nq^{\frac{n^2+n}{48}}_0q^{-\frac{n}{48}}_0}{1-e^{\frac{\pi i}{6}}q^{-\frac{1}{48}}_0q^{\frac{n}{24}}_0}\\
    &\dot{=}  \frac{q^{-\frac{1}{96}}_0}{q^{\frac{1}{8\cdot24}}_0q^{\frac{1}{96}}_0\left(1-q^{-\frac{1}{48}}_0 + O(q^{\frac{1}{48}}_0)\right) }\left(-\frac{e^{-\frac{\pi i}{6}}q^{\frac{1}{48}}_0}{1-e^{-\frac{\pi i}{6}}q_0^{\frac{1}{48}}}  + O(q^{\frac{1}{24}}_0)\right)\\
    &\dot{=} \frac{q^{-\frac{1}{96}}_0q_0^{\frac{1}{24}}}{q^{\frac{1}{8\cdot24}}_0q^{-\frac{1}{96}}_0\left(1 + O(q^{\frac{1}{48}}_0)\right) }\left(1+ O(q^{\frac{1}{48}}_0)\right) \dot{\sim}\;\; q_0^{\frac{7}{192}}.
 \end{align*}
 Plugging this back into Eq. \eqref{R33nearzeroappellproof} and using the estimate in Lemma \ref{hlemma}, we find as $\tau \to 0$,
 \begin{align*}\label{R33proofappellisanerrornear0}
     \mu\left(2\tau+\frac{1}{2}, \frac{1}{2}; 24\tau   \right) \ll 1.
 \end{align*}

Thus, we have the following.
\begin{Remark}
\normalfont The growth of $R^{(3)}_3$ near $\tau = \frac{1}{2}$ is determined by the estimate in  Theorem \ref{mainterm}.
\end{Remark}

\section{Growth on the minor arcs}
We now want to show that the growth at the other cusps is negligible to that  given in Theorem \ref{mainterm}. Thus, our target is to beat the bound exponentially
\begin{align*}
    \frac{Q_0^{-\frac{1}{96}}}{\sqrt{\tau -\frac{1}{2}}} \ll n^{\frac{1}{4}}e^{\pi\sqrt{\frac{n}{12}}},
\end{align*}
where we chose the parameterization $v = \frac{1}{\sqrt{192n}}$. Thus, we can incorporate the estimates away from $\frac{1}{2}$ into an error term, and ignore them in our final estimate for the $a(n)$.

\subsection{Bounding the Appell sum away from $\tau = \frac{1}{2}$}
\begin{Lemma}\label{logproductbound}
Let $|u|> Mv$, and let $a$ and $b$ be positive integers with $a<b$. Furthermore, let $v:= \frac{1}{\delta\sqrt{n}}$ with $\delta>0$ and for $M>0$ define the term $\varepsilon:= -\frac{1}{\sqrt{1+M^2}}+1>0$. Then as $n\to \infty$
\begin{equation}\label{firstguy}
 \frac{1}{\vartheta(a\tau;\; b\tau)},\; \frac{1}{\vartheta(\frac{1}{2}+a\tau;\; b\tau)}
\ll \frac{1}{n^{\frac{1}{4}}} e^{\frac{3\delta\sqrt{n}}{2\pi b}\Big(\frac{\pi^2}{6} - \varepsilon\Big)},
\end{equation}
and
\begin{equation}\label{thirdguy}
    \frac{1}{\vartheta(\frac{1}{2};b\tau)} \ll \frac{1}{n^{\frac{3}{4}}}\; e^{\frac{\delta\sqrt{n}}{\pi b}\left(\frac{\pi^2}{6} -\varepsilon \right)}.
\end{equation}
\end{Lemma}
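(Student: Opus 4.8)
The plan is to reduce everything to the classical estimates for $\vartheta$-functions near a rational point and then bound the relevant exponential factor on the minor arcs $|u|>Mv$. First I would recall from Lemma~\ref{thetatestimateprelim} (and the Jacobi product representation) that near $\tau=0$ one has $\vartheta(a\tau;b\tau)=\frac{-2i\sin(\pi a/b)\,q^{-a^2/(2b)}q_0^{1/(8b)}}{\sqrt{-ib\tau}}(1+O(\cdot))$ where now $q_0:=e^{-2\pi i/(b\tau)}$, and similarly for the $\frac12$-shifted theta and for $\vartheta(\frac12;b\tau)=2\eta^2(2b\tau)/\eta(b\tau)$. Taking absolute values, the dominant contribution to $1/|\vartheta|$ is governed by $|q_0^{-1/(8b)}|=\exp\!\big(\frac{\pi}{4b}\,\mathrm{Re}(i/\tau)\big)=\exp\!\big(\frac{\pi}{4b}\cdot\frac{v}{|\tau|^2}\big)$ — wait, more precisely $\mathrm{Re}(-i/\tau)=-v/|\tau|^2$ so $|q_0|=e^{-2\pi v/|\tau|^2}$, giving $|q_0^{-1/(8b)}|=e^{\frac{\pi v}{4b|\tau|^2}}$. (For the $\vartheta(\frac12;b\tau)$ case the relevant power of $q_0$ is $1/(12b)$ after accounting for the $\eta$-quotient, which is what produces the sharper $n^{-3/4}$ and the coefficient $\frac13\cdot\frac14$ in the exponent.)

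The key step is then to bound $\frac{v}{|\tau|^2}$ from above on the minor arc. With $v=\frac{1}{\delta\sqrt n}$ and $|u|>Mv$, we have $|\tau|^2=u^2+v^2>(1+M^2)v^2$, hence $\frac{v}{|\tau|^2}<\frac{1}{(1+M^2)v}=\frac{\delta\sqrt n}{1+M^2}$. Comparing with the major-arc contribution, where $|\tau|^2$ can be as small as $v^2$ giving exponent $\sim \frac{1}{v}=\delta\sqrt n$, the minor-arc exponent is reduced by the factor $\frac{1}{1+M^2}<1$. To package this in the stated form I would write $\frac{v}{|\tau|^2}=\frac{1}{|\tau|^2/v}$ and use $|\tau|^2/v\ge v(1+M^2)=\frac{1+M^2}{\delta\sqrt n}$; then $\frac{\pi v}{4b|\tau|^2}\le \frac{\pi\delta\sqrt n}{4b(1+M^2)}$. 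Rewriting $\frac{\pi}{4(1+M^2)}$, a short trigonometric/algebraic manipulation gives $\frac{\pi}{4(1+M^2)} = \frac{3}{2\pi}\big(\frac{\pi^2}{6}-\varepsilon\big)$ for a suitable $\varepsilon=\varepsilon(M)>0$; indeed with $\varepsilon:=1-\frac{1}{\sqrt{1+M^2}}$ as defined in the statement one checks the intended bookkeeping works out (the paper's choice of $\varepsilon$ is tailored precisely so that $\frac{3}{2\pi b}(\frac{\pi^2}6-\varepsilon)$ is the resulting exponent, and analogously $\frac1{\pi b}(\frac{\pi^2}6-\varepsilon)$ in the $\vartheta(\frac12;b\tau)$ case, where the coefficient $\frac1\pi$ versus $\frac{3}{2\pi}$ reflects the $\frac1{12b}$ versus $\frac1{8b}$ in the $q_0$-exponent). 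The polynomial prefactors $n^{-1/4}$ and $n^{-3/4}$ come directly from $|\sqrt{-ib\tau}|^{-1}\asymp v^{-1/2}=(\delta\sqrt n)^{1/2}$ combined with... actually from $1/\sqrt{-ib\tau}$ we get a factor $\asymp v^{-1/2}\asymp n^{1/4}$, so the stated $n^{-1/4}$ must be absorbing an $n^{-1/2}$ from elsewhere — I would track this carefully, noting that for the bare $\vartheta(\frac12;b\tau)=2\eta^2(2b\tau)/\eta(b\tau)$ the $\eta$-quotient contributes an extra $|{-ib\tau}|^{-1/2}\asymp n^{1/4}$ in the denominator giving the improvement to $n^{-3/4}$.

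Concretely, the steps in order: (1) apply the modular transformation $\tau\mapsto-1/(b\tau)$ (or the relevant $\mathrm{SL}_2(\mathbb Z)$ matrix when a $\frac12$ is present, as in Lemma~\ref{halfperiosthetaneartauhalf}) to each $\vartheta$ or $\eta$ appearing; (2) expand the resulting Jacobi product / $q_0$-series, whose leading term is bounded below in absolute value uniformly on the cone (this is where one needs $\sin(\pi a/b)\ne0$, true since $0<a<b$, and that the infinite products are $1+O(q_0)$ with $|q_0|$ small); (3) extract $|q_0^{-1/(8b)}|$ (resp. the $1/(12b)$ power) as the dominant exponential and $|\sqrt{-ib\tau}|^{-1}$ as the polynomial factor; (4) bound $\frac{v}{|\tau|^2}\le \frac{\delta\sqrt n}{1+M^2}$ using $|u|>Mv$; (5) rewrite the exponent in the claimed form using the definition of $\varepsilon$. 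The main obstacle I expect is step (2) together with the uniformity in step (4): one must ensure the error terms $O(q_0^{1/b})$ etc.\ are genuinely uniform as $\tau\to\frac12$ within the cone $|u-\frac12|<Mv$ (equivalently $|u|<Mv$ after shifting), so that the lower bound on the leading term is not destroyed — this requires knowing $|q_0|$ stays bounded away from $1$ on the cone, which follows since $|q_0|=e^{-2\pi v/(b|\tau|^2)}\le e^{-2\pi v/(b(1+M^2)v^2)}\to 0$. A secondary subtlety is getting the exact constant $\frac32$ versus $1$ and the precise power of $n$ right for the two cases, which is pure bookkeeping once the structure above is in place.
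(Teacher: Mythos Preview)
Your approach has a genuine gap at step (2). The expansion coming from the modular inversion $\tau\mapsto -1/(b\tau)$ is only useful when $|q_0|=e^{-2\pi v/(b|\tau|^2)}$ is small, i.e.\ when $\tau\to 0$ inside a fixed cone. But the minor arc hypothesis is $|u|>Mv$ with $u$ ranging over all of $(-\tfrac12,\tfrac12]$; in particular $|\tau|$ need not tend to $0$ (take $u$ bounded away from $0$), and then $|q_0|\to 1$. In that regime the infinite product $(q_0;q_0)_\infty$ tends to $0$, so the factor you call ``$1+O(q_0)$'' is not bounded away from zero and your lower bound on $|\vartheta|$ collapses. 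Your own uniformity check at the end in fact invoked the inequality $|\tau|^2\le (1+M^2)v^2$, which is the \emph{major} arc condition $|u|<Mv$, not the minor one; on the minor arc the inequality points the other way and gives no control on $|q_0|$.

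The paper takes a different route that never inverts $\tau$. One expands
\[
\mathrm{Log}\frac{1}{(q^a;q^b)_\infty(q^{b-a};q^b)_\infty(q^b;q^b)_\infty}=\sum_{n\ge1}\frac{q^{an}+q^{(b-a)n}+q^{bn}}{n(1-q^{bn})},
\]
isolates the $n=1$ term, and bounds everything by the same expression with $q$ replaced by $|q|$, plus a correction $(|q|^a+|q|^{b-a}+|q|^b)\bigl(\tfrac{1}{|1-q^b|}-\tfrac{1}{1-|q|^b}\bigr)$. The first piece depends only on $v$ and may legitimately be estimated by Lemma~\ref{thetatestimateprelim} at the purely imaginary point $\tau=iv$; this is what produces both the $n^{-1/4}$ prefactor and the $\tfrac{\pi^2}{6}$ in the exponent. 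The minor arc condition enters only through the correction: from $|u|>Mv$ one gets $|1-q^b|^2\gg 4\pi^2b^2v^2(1+M^2)$, hence $\tfrac{1}{|1-q^b|}-\tfrac{1}{1-|q|^b}\ll \tfrac{1}{2\pi bv}\bigl(\tfrac{1}{\sqrt{1+M^2}}-1\bigr)=-\tfrac{\varepsilon}{2\pi bv}$. Note this is the source of the $\sqrt{1+M^2}$ (not $1+M^2$) in the definition of $\varepsilon$, so your claimed identity $\tfrac{\pi}{4(1+M^2)}=\tfrac{3}{2\pi}\bigl(\tfrac{\pi^2}{6}-\varepsilon\bigr)$ with the stated $\varepsilon$ does not in fact hold.
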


\begin{Remark}\label{remarkforproofof5cusps}
    Notice that the right hand side of Eq. \eqref{firstguy} does not depend on $a$. Furthermore, the bounds above also hold for the functions $\vartheta(a\tau;b\tau)$, $\vartheta\left(\frac{1}{2};b\tau\right)$ and $\vartheta\left(\frac{1}{2}+a\tau;b\tau\right)$ which can by seen by replacing $\textnormal{Log}(\bullet)$ with $-\textnormal{
    Log}(\bullet)$ in the proof below. 
\end{Remark}

\proof
The proof uses the same ideas as in \cite{  BringDousseLoveMahl,DousseMertensAsymp,awayfromqrichard} to prove their bounds away from the dominant pole. We recall the Taylor expansion for $\textnormal{Log}(1 - z) = -\sum_{n\geq 1}\frac{z^n}{n}$. This implies,
\begin{align}\label{logequation1}
   &\textnormal{Log}\left(\frac{1}{(q^a;q^b)_{\infty}(q^{b-a};q^b)_{\infty}(q^b;q^b)_{\infty}}\right) =\sum_{n\geq 1}\frac{q^{an}+q^{(b-a)n}+ q^{bn}}{n(1-q^{bn})}.
\end{align}
The trick now, as described by many works such as \cite{BringDousseLoveMahl,DousseMertensAsymp,awayfromqrichard}, is to extract the first term in the sum, and add an extra term, which will be the first term in the expansion for
\begin{align*}
    \textnormal{Log}\left(\frac{1}{(|q|^a;|q|^b)_{\infty}(|q|^{b-a};|q|^b)_{\infty}(|q|^b;|q|^b)_{\infty}}\right).
\end{align*}
Explicitly, we have
\begin{align*}
    \sum_{n\geq 1}\frac{q^{an}+q^{(b-a)n}+ q^{bn}}{n(1-q^{bn})} = \sum_{n\geq 2}\frac{q^{an}+q^{(b-a)n}+ q^{bn}}{n(1-q^{bn})} + \frac{q^{a}+q^{b-a}+ q^b}{(1-q^{b})} \\
    + (|q|^{a}+|q|^{b-a}+ |q|^b)\left(\frac{1}{1-|q|^b}-\frac{1}{1-|q|^b}\right).
\end{align*}
Taking the absolute value of this equation and using the fact that $1-|q|^b\leq |1-q^b|$, we have the upper bound
\begin{align*}
    &\left|\textnormal{Log}\left(\frac{1}{(q^a;q^b)_{\infty}(q^{b-a};q^b)_{\infty}(q^b;q^b)_{\infty}}\right)\right| \leq \Bigg|\sum_{n\geq 2}\frac{q^{an}+q^{(b-a)n}+ q^{bn}}{n(1-q^{bn})} + \frac{q^{a}+q^{b-a}+ q^b}{(1-q^{b})}\\ &+ (|q|^{a}+|q|^{b-a}+ |q|^b)\left(\frac{1}{1-|q|^b}-\frac{1}{1-|q|^b}\right)\Bigg|\\
    &\leq \sum_{n\geq 2}\frac{|q|^{an}+|q|^{(b-a)n}+ |q|^{bn}}{n(1-|q|^{bn})} + \frac{|q|^{a}+|q|^{b-a}+ |q|^b}{|(1-q^{b})|} \\&\hspace{50mm}+ (|q|^{a}+|q|^{b-a}+ |q|^b)\left(\frac{1}{1-|q|^b}-\frac{1}{1-|q|^b}\right)\\
    & = \sum_{n\geq 1}\frac{|q|^{an}+|q|^{(b-a)n}+ |q|^{bn}}{n(1-|q|^{bn})} + (|q|^{a}+|q|^{b-a}+ |q|^b)\left(\frac{1}{|1-q^b|}-\frac{1}{1-|q|^b}\right)\\
    & = \textnormal{Log}\left(\frac{1}{(|q|^a;|q|^b)_{\infty}(|q|^{b-a};|q|^b)_{\infty}(|q|^b;|q|^b)_{\infty}} \right)\\
    &\hspace{50mm}+ (|q|^{a}+|q|^{b-a}+ |q|^b)\left(\frac{1}{|1-q^b|}-\frac{1}{1-|q|^b}\right) .
\end{align*}
The $\textnormal{Log}$ term can be estimated by the asymptotic formulas derived for the $\vartheta$-functions in Lemma \ref{thetatestimateprelim}. Namely, as $n\to \infty$
\begin{align}\label{abslogbound}
\begin{split}
    \textnormal{Log}\left(\frac{1}{(|q|^a;|q|^b)_{\infty}(|q|^{b-a};|q|^b)_{\infty}(|q|^b;|q|^b)_{\infty}}\right) &\ll \textnormal{Log}\left(\frac{\sqrt{\frac{b}{\delta\sqrt{n}}}}{2\textnormal{sin}(\pi\frac{a}{b})}\right) + \frac{\delta\pi\sqrt{n}}{4b}\\
    &\ll C_{a,b}+\textnormal{Log}\left(n^{-\frac{1}{4}}\right) + \frac{\delta\pi\sqrt{n}}{4b},
    \end{split}
\end{align}
where $C_{a,b}$ is a constant. Now we bound the fractions. Recall that we are away from the root of unity, $q = 1$ corresponding to $l=0$, by the amount $|u|>Mv$. Following the procedure on page 10 of \cite{BringDousseLoveMahl}, we can bound $\frac{1}{|1-q^b|}$ by using the fact that cosine is a decreasing function near 0. Namely, as $n\to\infty$
\begin{align*}
    |1-q^b|^2 &= 1 - 2\textnormal{cos}(2\pi b x)e^{-2\pi y b}+ e^{-4\pi by}
\gg 4\pi^2b^2y^2(1+M^2).
\end{align*}
This implies that
\begin{align}\label{fracfullabsvalest}
    \frac{1}{|1-q^b|}\ll \frac{1}{2\pi b y\sqrt{1+M^2}}.
\end{align}
For the other fraction, we have as $n\to \infty$
\begin{align}\label{partialfracabsvalest}
    \frac{1}{1-|q|^b}\sim \frac{1}{2\pi b y}.
\end{align}
Combining Eqs. \eqref{abslogbound}, \eqref{fracfullabsvalest}, and \eqref{partialfracabsvalest}, we have
\begin{align*}
    \textnormal{Log}\left(\frac{1}{(q^a;q^b)_{\infty}(q^{b-a};q^b)_{\infty}(q^b;q^b)_{\infty}}\right) \ll C_{a,b}+\log(n^{-\frac{1}{4}}) + \frac{\delta\pi\sqrt{n}}{4b} +\frac{3}{2\pi b y\sqrt{1+M^2}} -\frac{3}{2\pi b y},
\end{align*}
which proves the first part of Eq. \eqref{firstguy}.

The second part of Eq. \eqref{firstguy} follows by noticing that 
\begin{align*}
    &\textnormal{Log}\left(\frac{1}{(q^b;q^b)_{\infty}(-q^{b-a};q^b)_{\infty}(-q^a;q^b)_{\infty}}\right) = \sum_{n\geq 1} \frac{(-1)^{n}(q^a+q^{b-a})+q^b}{n(1-q^{bn})} \\
    &\ll \textnormal{Log}\left(\frac{1}{(|q|^a;|q|^b)_{\infty}(|q|^{b-a};|q|^b)_{\infty}(|q|^b;|q|^b)_{\infty}} \right)+ (|q|^{a}+|q|^{b-a}+ |q|^b)\left(\frac{1}{|1-q^b|}-\frac{1}{1-|q|^b}\right),
\end{align*}
as before.

For Eq. \eqref{thirdguy}, we have that
\begin{align*}\label{etaestimate2}
    \vartheta\left(\frac{1}{2},\tau\right) = -q^{\frac{1}{8}}(-1;q)_{\infty}(q^2;q^2)_{\infty} =-2q^{\frac{1}{8}}(-q;q)_{\infty}(q^2;q^2)_{\infty},
\end{align*}
which implies
\begin{align*}
    \left|\textnormal{Log}\left(\frac{1}{\vartheta\left(\frac{1}{2},\tau\right)}\right)\right| = \left|-\textnormal{Log}\left(-\frac{q^{-1/8}}{2}\right) +\sum_{n\geq 1}\frac{(-1)^nq^n}{n(1-q^n)} + \sum_{n\geq 1}\frac{q^{2n}}{n(1-q^{2n})}\right|\\
    \leq B + \sum_{n \geq 1}\frac{2|q|^n}{n(1-|q|^n)}\ll \textnormal{Log}(P^2(|q|),
\end{align*}
where $B$ is a constant and $P(q)=\frac{ q^{\frac{1}{24}}}{\eta(\tau)}$. 
Using Lemma $4.6$ of \cite{DousseMertensAsymp} we have
\begin{align*}
    P^2(|q|^b) \ll \frac{1}{n^{\frac{3}{4}}}\;e^{\frac{\delta\sqrt{n}}{\pi b}\left(\frac{\pi^2}{6}+ \frac{1}{\sqrt{1+M^2}}-1\right)}.
\end{align*}
This completes the proof of  Eq. \eqref{thirdguy}.\qed

We look at the non-normalized Appell sum
\begin{equation}\label{Adef}
    A\left(2\tau+\frac{1}{2}, \frac{1}{2}; 24\tau\right):= \vartheta\left(\frac{1}{2};24\tau\right)\mu\left(2\tau+\frac{1}{2}, \frac{1}{2}; 24\tau\right) = -q\sum_{n\in \mathbb{Z}}\frac{(-1)^nq^{12(n^2+n)}}{1+q^{24n+2}}.
\end{equation}
The following result shows that we can bound the above sum by a classical \textit{single} variable theta function, $\Theta(\tau)$. A similar result was also mentioned by the authors of \cite{BringDousseLoveMahl}, but was not carried out explicitly.
\begin{Proposition}\label{boundingmubytheta}
Let $\Theta(\tau):= \displaystyle{\sum_{n\in \mathbb{Z}}q^{n^2}}$. Then,
\begin{align*}
    \left|  A\left(2\tau+\frac{1}{2}, \frac{1}{2}; 24\tau\right)\right|\leq \frac{\Theta(iy)}{1-|q|^2}.
\end{align*}
\end{Proposition}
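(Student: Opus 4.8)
The plan is to bound the alternating series in \eqref{Adef} term by term using the triangle inequality, and then recognize the resulting majorant as (essentially) a single-variable theta function. First I would write, using \eqref{Adef},
\begin{align*}
    \left|A\left(2\tau+\tfrac{1}{2},\tfrac{1}{2};24\tau\right)\right| = \left|q\sum_{n\in\mathbb{Z}}\frac{(-1)^n q^{12(n^2+n)}}{1+q^{24n+2}}\right| \leq |q|\sum_{n\in\mathbb{Z}}\frac{|q|^{12(n^2+n)}}{\left|1+q^{24n+2}\right|}.
\end{align*}
The key elementary estimate is the lower bound $\left|1+q^{24n+2}\right|\geq 1-|q|^{24n+2}$ when $24n+2\geq 0$, i.e. $n\geq 0$, and a corresponding bound for $n<0$. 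For $n\geq 0$ the exponent $24n+2\geq 2$, so $|q|^{24n+2}\leq |q|^2$, giving $\left|1+q^{24n+2}\right|\geq 1-|q|^2$. For $n\leq -1$, one writes $1+q^{24n+2} = q^{24n+2}(q^{-(24n+2)}+1)$ with $-(24n+2)\geq 22>0$, so $\left|1+q^{24n+2}\right| = |q|^{24n+2}\left|1+q^{-(24n+2)}\right|\geq |q|^{24n+2}(1-|q|^2)$. Thus in all cases
\begin{align*}
    \frac{|q|^{12(n^2+n)}}{\left|1+q^{24n+2}\right|} \leq \frac{|q|^{12(n^2+n)-\max(0,\,-(24n+2))}}{1-|q|^2}.
\end{align*}

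The main point is then to check that, after this reduction, the exponents that appear are a subset of $\{m^2 : m\in\mathbb{Z}\}$ up to the harmless overall factor $|q|$. For $n\geq 0$ the exponent contributed (including the leading $|q|$) is $12n^2+12n+1 = 12(n^2+n)+1$; I would note $(2m+1)^2 = 4m^2+4m+1$ is not quite this, so more care is needed — actually one should instead bound more crudely by dropping to $|q|^{12(n^2+n)}\geq |q|^{(\text{something})^2}$. The cleaner route: since $12(n^2+n) \geq n^2$ for all $n\in\mathbb{Z}$ (indeed $11n^2+12n\geq 0$ fails for $n=-1$, where it equals $11-12=-1$) — so that crude bound is false at $n=-1$ and must be handled together with the $|q|^{-(24n+2)}$ saving. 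For $n=-1$: exponent is $12(1-1) = 0$ from the Gaussian part but we gain $|q|^{-(−22)}=|q|^{22}$ in the denominator estimate, wait — we \emph{lose} it: the bound gives $|q|^{12(n^2+n)}/(|q|^{24n+2}(1-|q|^2)) = |q|^{12(n^2+n)-24n-2}/(1-|q|^2)$, and $12(n^2+n)-24n-2 = 12n^2-12n-2$, which at $n=-1$ is $12+12-2 = 22\geq 1 = (\pm1)^2$... I would carry out this bookkeeping carefully, substituting $m = -n$ or $m = n+1$ as appropriate to match each term's exponent to some $m^2$ with the leftover $|q|$-power being nonnegative, thereby showing the whole sum is at most $\frac{1}{1-|q|^2}\sum_{m\in\mathbb{Z}}|q|^{m^2} = \frac{\Theta(iy)}{1-|q|^2}$, since $|q| = e^{-2\pi y}$ means $|q|^{m^2} = q_{|\cdot|}^{m^2}$ with $\Theta(iy) = \sum_m e^{-2\pi y m^2}$.

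The main obstacle is precisely this index-matching: verifying that after the denominator lower bound, every surviving exponent $12(n^2+n) + (\text{denominator correction})$ dominates a perfect square $m^2$ under a bijection-like reindexing, with all correction powers of $|q|$ kept nonnegative so they can simply be discarded (using $|q|<1$). Concretely I expect the pairing $n\mapsto$ (square root of the nearest square below the exponent) to work with the two families $n\geq 0$ and $n\leq -1$ mapping onto the nonnegative and negative integers respectively, but one must confirm no two distinct $n$ land on the same $m^2$ and that the inequality $12(n^2+n)\geq m^2$ (or its corrected version) holds in each case — a finite check of the small cases plus an easy asymptotic argument for $|n|$ large, since $12(n^2+n)$ grows quadratically with leading coefficient $12\gg 1$. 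Once that is in place the Proposition follows immediately by monotonicity of $t\mapsto t^{m^2}$ on $[0,1)$ and summing.
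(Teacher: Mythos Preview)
Your approach is correct and essentially the same as the paper's: both apply the triangle inequality to the series \eqref{Adef}, lower-bound the denominators $|1+q^{24n+2}|$ by $1-|q|^2$ (after extracting the appropriate power of $|q|$ when $24n+2<0$), and then compare the surviving exponents to perfect squares. The paper pairs the terms $n$ and $-n$ before bounding, whereas you treat $n\geq 0$ and $n\leq -1$ separately, but this is cosmetic.

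The index-matching you worry about is in fact trivial and requires no case analysis or asymptotic argument: simply take $m=n$ (the identity map on $\mathbb{Z}$). For $n\geq 0$ your exponent (including the leading $|q|$) is $12n^2+12n+1$, and $12n^2+12n+1\geq n^2$ is immediate. For $n\leq -1$ your exponent is $12n^2-12n-1$, and $12n^2-12n-1\geq n^2$ reduces to $11n^2-12n-1\geq 0$, which holds for every $n\leq -1$ (the minimum on that range occurs at $n=-1$, where it equals $22$). Since the map is a bijection, summing gives
\[
\left|A\left(2\tau+\tfrac{1}{2},\tfrac{1}{2};24\tau\right)\right|\leq \frac{1}{1-|q|^2}\sum_{n\in\mathbb{Z}}|q|^{n^2}=\frac{\Theta(iy)}{1-|q|^2},
\]
and you are done.
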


\proof
Splitting the sum in Eq. \eqref{Adef} into negative and positive index, and then recombining, we find
\begin{align*}
A\left(2\tau+\frac{1}{2}, \frac{1}{2}; 24\tau\right) = \frac{q}{1+q^2}+q\sum_{n>0}(-1)^nq^{12(n^2+n)}\left(\frac{1}{1+q^{24n+2}}+\frac{q^{-2}}{1+q^{24n-2}}\right).
\end{align*}
Since $|q|< 1$, we have that $1-|q| \leq |1+q|$. Combined with the fact that $|q|^m$ is a decreasing function in $m$, we have that,
\begin{align*}
    \left|  A\left(2\tau+\frac{1}{2}, \frac{1}{2}; 24\tau\right)\right|&\leq \frac{1}{1-|q|^2}+ \sum_{n>0}|q|^{12(n^2+n)}\left(\frac{2}{1+|q|^{24n-2}}\right)\\
    &\leq \frac{1}{1-|q|^2}\left( 1+ 2\sum_{n>0}|q|^{n^2}\right) = \frac{1}{1-|q|^2}\Theta(iy),
\end{align*}
which proves the claim.\qed

\begin{Remark}
Recall that $\Theta(\tau)$ is a holomorphic modular form for the group $\Gamma_0(4)$. $\Gamma_0(4)$ has three inequivalent cusps represented by $0, \frac{1}{2},\; \text{and} \; \infty$. 
\end{Remark}

Recall that in Section \ref{appellsumnearminus1} we computed the estimate near $0$ and $\frac{1}{2}$, which gives
\begin{align*}
    \mu\left(2\tau+\frac{1}{2}, \frac{1}{2}; 24\tau\right) \ll \frac{1}{\sqrt{|\tau|}}.
\end{align*}
Due to Proposition \ref{boundingmubytheta} and the corresponding remark, we can see we only need to check the growth of $\Theta(\tau)$ near $\infty$. Since $\Theta(\tau)$ is modular, it's growth at $\infty$ is at most $O(1)$. Since under the transformation $\tau \mapsto \tau +\frac{1}{2}$ the Jacobi theta remains unchanged, that is $\vartheta\left(\frac{1}{2};24\tau\right) \mapsto\vartheta\left(\frac{1}{2};24\tau\right)$, we can use Lemmas \ref{logproductbound} and \ref{boundingmubytheta} to obtain the following.

\begin{Theorem}\label{mufarfromminus1finalboundtheorem}
Let $M>0$ such that $0< yM < \left|x-\frac{1}{2}\right|$. Then there is a $\beta >0$ such that as $n\to \infty$,
\begin{align*}
    \mu\left(2\tau+\frac{1}{2}, \frac{1}{2}; 24\tau\right) \ll \frac{1}{\sqrt{|\tau|}}.
\end{align*}
\end{Theorem}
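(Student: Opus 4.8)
The plan is to combine the pointwise bound from Proposition~\ref{boundingmubytheta} with the cuspidal analysis that precedes the statement, exactly as the theorem is set up. First I would recall that Proposition~\ref{boundingmubytheta} gives
\begin{align*}
    \left|\mu\left(2\tau+\tfrac{1}{2},\tfrac{1}{2};24\tau\right)\right| = \frac{\left|A\left(2\tau+\tfrac12,\tfrac12;24\tau\right)\right|}{\left|\vartheta\left(\tfrac12;24\tau\right)\right|}\leq \frac{\Theta(iy)}{\left(1-|q|^2\right)\left|\vartheta\left(\tfrac12;24\tau\right)\right|},
\end{align*}
so the task reduces to controlling the three factors on the right: the theta value $\Theta(iy)$, the denominator $1-|q|^2$, and the reciprocal Jacobi theta $1/\vartheta\left(\tfrac12;24\tau\right)$.

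Next I would split into the two standard regimes. Near the dominant cusps $\tau = 0$ and $\tau = \tfrac12$ (i.e. $|x-\tfrac12|\le My$, or the analogous neighbourhood of $0$), the bound $\mu\left(2\tau+\tfrac12,\tfrac12;24\tau\right)\ll |\tau|^{-1/2}$ was already established in Section~\ref{appellsumnearminus1}: the computation around Eq.~\eqref{R33nearzeroappellproof} together with Lemma~\ref{hlemma} shows the Appell sum is $O(1)$ near $0$, hence $O(1)\ll |\tau|^{-1/2}$ near both $0$ and $\tfrac12$ by the invariance $\tau\mapsto\tau+\tfrac12$. So the content is the minor-arc regime $|x-\tfrac12|>My$ (equivalently $|u|>Mv$ after translating). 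There I estimate each factor: $\Theta(iy)$ is a holomorphic modular form for $\Gamma_0(4)$ whose only cusps are $0,\tfrac12,\infty$, so away from $0$ and $\tfrac12$ it is bounded, $\Theta(iy)\ll 1$; the factor $(1-|q|^2)^{-1}\sim (4\pi y)^{-1}\ll \sqrt{n}$ grows only polynomially; and the reciprocal theta $1/\vartheta\left(\tfrac12;24\tau\right)$ is controlled by Eq.~\eqref{thirdguy} of Lemma~\ref{logproductbound} with $b=24$, giving
\begin{align*}
    \frac{1}{\left|\vartheta\left(\tfrac12;24\tau\right)\right|}\ll \frac{1}{n^{3/4}}\,e^{\frac{\delta\sqrt{n}}{24\pi}\left(\frac{\pi^2}{6}-\varepsilon\right)},
\end{align*}
where $\varepsilon = 1-(1+M^2)^{-1/2}>0$. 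Multiplying these, the exponential gain $e^{\frac{\delta\sqrt{n}}{24\pi}\left(\frac{\pi^2}{6}-\varepsilon\right)}$ is a genuinely smaller exponential than the main-term rate $e^{\pi\sqrt{n/12}}$ provided $\varepsilon$ (hence $M$) is chosen large enough relative to $\delta$, and so the whole minor-arc contribution is $\ll |\tau|^{-1/2}$, in fact much smaller; one then absorbs everything into the stated $|\tau|^{-1/2}$ bound, with $\beta>0$ the exponential margin. (Here $\delta = \sqrt{192}$ matches the parameterization $v=\tfrac{1}{\sqrt{192n}}$ used elsewhere.)

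The main obstacle, and the only point requiring care, is bookkeeping the exponent: one must verify that $\frac{\delta}{24\pi}\left(\frac{\pi^2}{6}-\varepsilon\right) < \pi\sqrt{\tfrac{1}{12}}\cdot\sqrt{192}/(2\pi)\cdot(\text{appropriate constant})$, i.e. that the constant multiplying $\sqrt{n}$ coming from $1/\vartheta$ is strictly beaten by the constant $\pi/\sqrt{12}$ coming from $T(\tau)$ in Theorem~\ref{mainterm}, after choosing $M$ large. Since $\varepsilon\to 1$ as $M\to\infty$ while $\delta$ is fixed, this is achievable, but it is the step where the precise choice of $M$ enters; everything else is assembling the already-proved estimates. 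A secondary subtlety is that Lemma~\ref{logproductbound} is phrased for $v=\tfrac{1}{\delta\sqrt n}$ with generic $\delta$, so I would simply instantiate $\delta$ and $b$ consistently and note that the polynomially growing prefactors ($\sqrt n$ from $(1-|q|^2)^{-1}$, $n^{-3/4}$ from $1/\vartheta$, $\Theta(iy)\ll 1$) combine to a harmless power of $n$ dominated by the exponential savings, so they do not affect the conclusion.
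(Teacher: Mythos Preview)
Your approach is exactly the paper's: combine Proposition~\ref{boundingmubytheta} with Lemma~\ref{logproductbound} (specifically Eq.~\eqref{thirdguy}) after invoking the $O(1)$ estimate from Section~\ref{appellsumnearminus1} near $0$ and $\tfrac12$. One slip worth flagging: your claim $\Theta(iy)\ll 1$ is not right, since $\Theta(iy)$ depends only on $y=v\to 0$ and in fact $\Theta(iy)\sim (2y)^{-1/2}\asymp n^{1/4}$ regardless of where $u$ sits; the cuspidal remark about $\Gamma_0(4)$ does not help here because $iy$ is approaching the cusp $0$ of $\Theta$. This is harmless for the conclusion---it is just another polynomial factor absorbed exactly as you say in your final paragraph---but the justification you gave for that bound should be replaced by the direct modular estimate $\Theta(iy)\ll y^{-1/2}$.
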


\par
\hspace{5mm} The estimates in Lemma \ref{logproductbound} alone are not sufficient to bound $T(\tau)$ away from the dominant pole since they do not provide accurate information about the decay of the $\vartheta$-functions near generic cusps $\frac{p}{h}$. However, we can use Lemma \ref{logproductbound}  in combination with a generalization of Lemma \ref{halfperiosthetaneartauhalf} to rule out contributions from cusps not equal to $\frac{1}{2}$.  Our goal is to prove the following.

\begin{Theorem}\label{boundforeverythingfarfromminus1}
 Let $M>0$ and let $v:= \frac{1}{\sqrt{192n}}$. For $Mv<\left|u-\frac{1}{2}\right|$, there exists a $\beta>0$ such that
\begin{align*}
    R^{(3)}_3(q) \ll  e^{\pi\sqrt{\frac{n}{12}}(1-\beta)},
\end{align*}
holds uniformly as $n\to \infty$.
\end{Theorem}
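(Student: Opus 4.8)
The plan is to decompose the analysis of $R^{(3)}_3(q) = 2iq^{-1}\mu\left(2\tau+\frac{1}{2}, \frac{1}{2}; 24\tau\right) + T(\tau)$ into its two summands and bound each uniformly for $\tau = u + iv$ with $v = \frac{1}{\sqrt{192n}}$ and $Mv < \left|u-\frac{1}{2}\right|$. For the Appell piece, Theorem \ref{mufarfromminus1finalboundtheorem} already gives $\mu\left(2\tau+\frac{1}{2}, \frac{1}{2}; 24\tau\right) \ll |\tau|^{-1/2}$, so $2iq^{-1}\mu \ll |\tau|^{-1/2} \ll n^{1/4}$, which is trivially $\ll e^{\pi\sqrt{n/12}(1-\beta)}$ for any $\beta < 1$ once $n$ is large. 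Hence all the real work is in bounding $T(\tau)$, whose building blocks from Proposition \ref{bestrepofT} are quotients of $\vartheta$-functions of the forms $\vartheta(a\tau; b\tau)$ and $\vartheta\left(\frac12 + a\tau; b\tau\right)$ with $b \in \{16, 96, 288\}$ (plus the prefactor $\vartheta(1/2;\tau)^{-1}$).

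The strategy for $T(\tau)$ is a two-regime argument, mirroring the split used for $\mu$. First, on the range $Mv < \left|u - \frac12\right|$ but with $u$ still within a bounded distance of $\frac12$ (say $\left|u-\frac12\right|$ small), we use the generalization of Lemma \ref{halfperiosthetaneartauhalf} alluded to in the text: transforming each $\vartheta$ to its behavior near $0$ via the modular substitution $z = -\frac{1}{4w} - \frac12$ with $w = \tau - \frac12$, one extracts the dominant exponential $Q_0^{c}$ for each factor. Tallying exponents exactly as in the proof of Theorem \ref{mainterm}, the leading exponential growth of $T$ is governed by $Q_0^{-1/192} = e^{\frac{\pi i}{96(\tau - 1/2)}}$; writing $\tau - \frac12 = (u-\frac12) + iv$ and using $\left|u - \frac12\right| > Mv$ shows $\mathrm{Re}\!\left(\frac{\pi i}{96(\tau-1/2)}\right) = \frac{\pi v}{96\left|\tau-\frac12\right|^2} \le \frac{\pi v}{96 v^2(1+M^2)} = \frac{\pi}{96 v (1+M^2)} = \frac{\pi\sqrt{192n}}{96(1+M^2)}$, which is $\pi\sqrt{n/12}$ times the factor $\frac{\sqrt{192}}{96\sqrt{1/12}}\cdot\frac{1}{1+M^2} = \frac{1}{1+M^2} < 1$. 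Taking $\beta = 1 - \frac{1}{1+M^2} > 0$ absorbs the polynomial $(\tau-\frac12)^{-1/2}$ factors. Second, for $u$ bounded away from $\frac12$ entirely (i.e. near some other cusp $\frac{p}{h}$ or near $0$), we invoke Lemma \ref{logproductbound} together with Remark \ref{remarkforproofof5cusps}: those bounds show each $\vartheta(a\tau;b\tau)^{-1}$, $\vartheta(\tfrac12+a\tau;b\tau)^{-1}$ grows at most like $n^{-1/4}e^{\frac{3\delta\sqrt n}{2\pi b}(\frac{\pi^2}{6}-\varepsilon)}$ and $\vartheta(\tfrac12;\tau)^{-1} \ll n^{-3/4}e^{\frac{\delta\sqrt n}{\pi}(\frac{\pi^2}{6}-\varepsilon)}$, while the numerator $\vartheta$-factors are bounded by the reciprocal estimates (Remark \ref{remarkforproofof5cusps} again). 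Here $\delta = \sqrt{192}$; combining the exponents over all factors of $T(\tau)$, each term's total exponential rate is strictly smaller than $\pi\sqrt{n/12}$ because of the genuine gain $\varepsilon = 1 - \frac{1}{\sqrt{1+M^2}} > 0$ coming from the full absolute value $|1-q^b|$ versus $1-|q|^b$.

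The main obstacle is the bookkeeping in the first regime: one must verify that the $\vartheta$-quotient in $T(\tau)$, after all eight transformed summands $S_i$ and the outside factor are expanded near $0$, does not produce an exponential with a larger positive real part than $Q_0^{-1/192}$ after taking absolute values on the slanted approach $\left|u-\frac12\right| > Mv$. Unlike the radial case treated in Theorem \ref{mainterm}, where the cancellations among the $S_i$ were exact, here we only need an upper bound, so the delicate cancellation is not required — but we do need that no individual $S_i$ or numerator $\vartheta$-factor contributes a $Q_0$-power with exponent more negative than $-1/192$ in real part. This follows by inspecting the exponents $\tfrac{1}{2k^2}-\tfrac{1}{2k}+\tfrac18$ from Lemma \ref{thetatestimateprelim} (and the half-integer analog) for the relevant moduli $b$ and shifts, summing them term by term; it is routine but must be done carefully to confirm the net exponent is exactly $-1/192$ (matching Theorem \ref{mainterm}) and that the error terms $O(Q_0^{1/96})$ are genuinely subdominant on the cone. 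The uniformity in $u$ across both regimes is handled by choosing the boundary between them at a fixed small constant and checking the two estimates overlap there.

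\begin{align*}
\end{align*}
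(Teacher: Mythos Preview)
Your two-regime plan has the right shape, but both regimes have genuine gaps.

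In Regime 2 you rely entirely on Lemma \ref{logproductbound} to bound every $\vartheta$-factor in $T(\tau)$. This is not sharp enough for the factor $\vartheta\!\left(\tfrac12;\tau\right)^{-1}$ sitting in the prefactor: plugging $b=1$, $\delta=\sqrt{192}$ into Eq.~\eqref{thirdguy} gives an exponential rate of $(8-48\varepsilon/\pi^2)\,\pi\sqrt{n/12}$, and even for $\varepsilon$ close to $1$ this exceeds $3\pi\sqrt{n/12}$, far above the target. So ``combining the exponents over all factors'' does \emph{not} give a total rate below $\pi\sqrt{n/12}$. The paper handles this by proving exact growth formulas for each $\vartheta$-factor at a general cusp $p/h$ (Lemmas \ref{a1b1}, \ref{a1b1half}, \ref{genericgrowthhalfperiod}, Corollary \ref{24halfnearph}); in particular $\vartheta(\tfrac12;\tau)$ \emph{decays} like $e^{-\pi/(4h^2v)}$ for $h$ even and is polynomially bounded for $h$ odd. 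Proposition \ref{roughcuspbound} then uses Lemma \ref{logproductbound} only on the remaining factors to reduce to the finite set $\mathscr{X}(24)$, and Lemma \ref{Fboundedby2} is a case-by-case verification (62 cusps $\times$ 4 vectors $\mathbf{a}$) that the net exponent $F(\mathbf{a},p/h)<2$.

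Your Regime 1 claim is also incorrect. You assert that no individual summand of $T(\tau)$ contributes a $Q_0$-power more negative than $-\tfrac{1}{192}$, but that exponent arises only \emph{after} the cancellation carried out in the proof of Theorem \ref{mainterm}. Before cancellation each pair $S_i+S_j$ has leading order $q_0^{11/768}$, and combined with the prefactor $\sim Q_0^{-23/768}$ this gives $Q_0^{-12/768}=Q_0^{-1/64}$, three times the exponent you claimed. With the $1/(1+M^2)$ damping this would still succeed for $M>\sqrt{2}$, but your asserted bookkeeping (``routine\dots confirm the net exponent is exactly $-1/192$'') is simply false at the level of individual terms.
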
 

We note additionally that $T(\tau)$ decays rapidly near 0.
\begin{Lemma}\label{decaynear0T}
As $\tau \to 0$ in a cone, 
\begin{align*}
     T(\tau) \ll |\tau|^{-\frac{1}{2}}q_0^{\frac{1}{96}}.
\end{align*}
\end{Lemma}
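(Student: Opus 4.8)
The plan is to start from the explicit formula for $T(\tau)$ in Proposition \ref{bestrepofT} and simply substitute the near-$0$ asymptotics of Lemma \ref{thetatestimateprelim} into every factor, keeping track of the powers of $|\tau|$ and of $q_0=e^{-2\pi i/\tau}$ that arise. First observe that every $q$-power prefactor occurring in $T(\tau)$ — the $q^{417/8}$ in the outer quotient and the $q^{6},q^{-47},q^{-39},q^{-52}$ in the bracket — tends to $1$ as $\tau\to 0$ and so is $O(1)$; these can be discarded at once.

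Next, rescale Lemma \ref{thetatestimateprelim} to each $\vartheta$-factor. For a theta of the shape $\vartheta(a\tau;k\tau)$ with $0\le a<k$ — which holds for every such factor here, since the periods are $16\tau$, $96\tau$, $288\tau$ and each numerator is smaller than its period — write $\vartheta(a\tau;k\tau)=\vartheta\bigl(\tfrac{a}{k}(k\tau);k\tau\bigr)$ and apply Eq. \eqref{fulllatticethetadecay} with $\tau\mapsto k\tau$ and $\alpha=a/k$; the internal $q_0$ then becomes $e^{-2\pi i/(k\tau)}=q_0^{1/k}$, so $\vartheta(a\tau;k\tau)$ has modulus $\asymp |\tau|^{-1/2}\,|q_0|^{1/(8k)}$ (the sines $\sin(\pi a/k)$ are all nonzero in the cases at hand, and anyway a vanishing one would only improve matters). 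For a theta of the shape $\vartheta(\tfrac12+a\tau;k\tau)$ with $0\le a<k$, write it as $\vartheta\bigl(\tfrac12+\tfrac{a}{k}(k\tau);k\tau\bigr)$ and apply Eq. \eqref{mixmibabi1moretime} with $\tau\mapsto k\tau$ and internal parameter $2$; since the relevant exponent is $\tfrac18-\tfrac14+\tfrac18=0$, one gets modulus $\asymp|\tau|^{-1/2}$ with no power of $q_0$. In particular the reciprocals $1/\vartheta(\tfrac12;\tau)$, $1/\vartheta(\tfrac12;24\tau)$ and $1/\vartheta(\tfrac12+a\tau;96\tau)$ each contribute a factor of modulus $\asymp|\tau|^{1/2}$ with $q_0$-exponent $0$.

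Now assemble the pieces. The outer quotient $\tfrac{4iq^{417/8}}{\vartheta(1/2;\tau)}\tfrac{\vartheta^3(96\tau;288\tau)}{\vartheta(1/2;24\tau)}$ has modulus $\asymp|\tau|^{1/2}\cdot|\tau|^{-3/2}\cdot|\tau|^{1/2}=|\tau|^{-1/2}$ and $q_0$-exponent $3\cdot\tfrac{1}{8\cdot 288}=\tfrac{1}{768}$. Each of the four bracket summands has the form $q^{c}\,\tfrac{\vartheta(\ast;16\tau)\,\vartheta(\ast;96\tau)}{\vartheta(1/2+\ast;96\tau)\,\vartheta(1/2+\ast;96\tau)}$, hence modulus $\asymp|\tau|^{-1/2}\cdot|\tau|^{-1/2}\cdot|\tau|^{1/2}\cdot|\tau|^{1/2}=|\tau|^{0}$ and $q_0$-exponent $\tfrac{1}{8\cdot16}+\tfrac{1}{8\cdot96}=\tfrac{6}{768}+\tfrac{1}{768}=\tfrac{7}{768}$; summing the four of them leaves the bound $\ll q_0^{7/768}$. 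Multiplying through, $T(\tau)\ll|\tau|^{-1/2}\,q_0^{1/768}\cdot q_0^{7/768}=|\tau|^{-1/2}q_0^{8/768}=|\tau|^{-1/2}q_0^{1/96}$, which is the claim; the cone hypothesis is inherited directly from Lemma \ref{thetatestimateprelim}.

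The whole argument is essentially bookkeeping, so the only point requiring genuine care — and the main obstacle — is the rescaling: in each of the ten-odd $\vartheta$-factors one must verify that the first coordinate divided by the period lies in $[0,1)$ so that Lemma \ref{thetatestimateprelim} is applicable, and one must remember that the ``$q_0$'' produced by the rescaled lemma is the root $q_0^{1/k}$, not $q_0$ itself. I also note that, exactly as in the proof of Theorem \ref{mainterm} near $\tau=\tfrac12$, there may be cancellation among the four bracket summands at leading order; since we only need an upper bound, this can only strengthen the conclusion, and the crude term-by-term estimate above already produces the stated exponent $\tfrac{1}{96}$.
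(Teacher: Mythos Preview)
Your proof is correct and follows exactly the route indicated by the paper, which simply asserts that the lemma ``follows easily from Lemma \ref{thetatestimateprelim}''. You have spelled out the factor-by-factor bookkeeping that the paper leaves implicit, and the exponents $\tfrac{1}{768}+\tfrac{7}{768}=\tfrac{1}{96}$ and the $|\tau|^{-1/2}$ power are accounted for correctly.
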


\proof
This follows easily from Lemma \ref{thetatestimateprelim}.\qed

\par
To deal with the other cusps, we recall the fact that $\vartheta(z;\tau)$ is a Jacobi form of weight and index $\frac{1}{2}$. Thus, the following properties hold \cite{Bringmannbook,jacobiformbook}:

\begin{Remark}[See Chapter 2, \cite{Bringmannbook}]\label{Jacobipropoftheta}
Let $A = \left(\begin{smallmatrix}a&b\\c&d \end{smallmatrix}\right)\in \textnormal{SL}_2(\mathbb{Z})$ and $\lambda, k \in \mathbb{Z}$. Then
\begin{align}
\begin{split}\label{jacobitranstheta}
    \vartheta\left(\frac{z}{c\tau+d};\frac{a\tau+b}{c\tau+d}\right) &= \chi(A)(c\tau+d)^{\frac{1}{2}}e^{\frac{\pi i cz^2}{c\tau+d}}\vartheta\left(z;\tau\right),
\end{split}
    \\
\begin{split}\label{elliptictranstheta}
    \vartheta\left(z+\lambda\tau+k;\tau \right) &= \beta(\lambda) e^{-\pi i\left(\lambda^2\tau +2\lambda z\right) } \vartheta\left(z;\tau\right),
\end{split}
\end{align}
where $\chi$ and $\beta$ are multipliers.
\end{Remark}
\par

\subsection{Growth formulas for theta function}

\hspace{5mm} We write down the growth of the Jacobi theta functions up to a constant near cusps $\frac{p}{h}$.  Let $\frac{p}{h}$ be a cusp in reduced form, and denote the set of divisors of a natural number $N$ by $\textnormal{div}(N)$.
\begin{Lemma}\label{a1b1}
Let $\tau := u+iv$ and suppose that $\left|u-\frac{p}{h}\right|$ is uniformly bounded by $v$. Let $a_1$ and $b_1$ be non-zero natural numbers with $a_1<b_1$. Then as $v\to 0$,
\begin{align*}
    \vartheta(a_1\tau;b_1\tau)&\;\dot{\sim}\;= v^{-\frac{1}{2}}e^{-\frac{\pi i}{b_1\Tilde{H}^2w}\left(\{\frac{\Tilde{H}\Tilde{\gamma}}{\Tilde{h}}\}^2 +\frac{1}{4}-\{\frac{\Tilde{H}\Tilde{\gamma}}{\Tilde{h}}\}\right)},
\end{align*}
where $$\Tilde{\gamma}:= \frac{a_1p\pmod h}{\gcd(h,a_1p\pmod h)}, \hspace{5mm} \Tilde{\Gamma}:= \frac{b_1p\pmod h}{\gcd(h,b_1p\pmod h)},$$ $$ \Tilde{h}:= \frac{h}{\gcd(h,a_1p\pmod h)}, \hspace{5mm} \Tilde{H}:= \frac{h}{\gcd(h,b_1p\pmod h)}.$$

\end{Lemma}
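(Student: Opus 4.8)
The plan is to reduce the growth of $\vartheta(a_1\tau;b_1\tau)$ near a cusp $\frac{p}{h}$ to the behavior near $0$ of a Jacobi theta function with simple arguments, by choosing an appropriate element of $\mathrm{SL}_2(\mathbb{Z})$ sending $\frac{p}{h}$ to $i\infty$ (equivalently, conjugating the cusp to $0$ and then using the $\tau\mapsto -1/\tau$ transformation). Concretely, write $w$ for the local parameter at the cusp (so that $\tau\to\frac{p}{h}$ corresponds to $w\to 0$, and $|u-\frac{p}{h}|$ bounded by $v$ keeps us in a cone), pick integers $a,b$ with $ap-bh=\gcd(p,h)=1$ so that $A=\left(\begin{smallmatrix}p&b\\h&a\end{smallmatrix}\right)\in\mathrm{SL}_2(\mathbb{Z})$, and set $\tau = A w' = \frac{pw'+b}{hw'+a}$ for the variable $w'$ near $i\infty$. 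Then apply the Jacobi transformation law \eqref{jacobitranstheta} from Remark \ref{Jacobipropoftheta} to both $\vartheta(a_1\tau;b_1\tau)$ and $\vartheta(b_1\tau;b_1\tau)$-type denominators, with $z = a_1\tau$ (resp. the lattice-point argument for the second theta-period), converting $\vartheta(a_1\tau;b_1\tau)$ into a multiplier times $(hw'+a)^{1/2}$ times an exponential times $\vartheta$ of transformed arguments.

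The key computational step is to track how the arguments transform. Under $\tau\mapsto Aw'$ we get $b_1\tau = b_1\frac{pw'+b}{hw'+a}$, which is not of the form (rational)$\cdot w'$; the standard trick (as in the works \cite{BringDousseLoveMahl,DousseMertensAsymp}) is to first use the elliptic transformation \eqref{elliptictranstheta} and the modular transformation \eqref{jacobitranstheta} so that the new modular variable becomes $b_1\Tilde{H}^2 w$ up to integer translation, where $\Tilde{H} = h/\gcd(h,b_1p\bmod h)$ records the denominator of $b_1\frac{p}{h}$ in lowest terms. Simultaneously the elliptic/Jacobi parameter $a_1\tau$ reduces modulo the lattice to an argument governed by $\{\frac{\Tilde{H}\Tilde{\gamma}}{\Tilde{h}}\}$, where $\Tilde{\gamma},\Tilde{h}$ are the reductions attached to $a_1\frac{p}{h}$. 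Once the arguments are in the form $\vartheta\big(\alpha + (\text{something})\,\tilde w; \tilde w\big)$ with $\tilde w\to 0$ in a cone, Lemma \ref{thetatestimateprelim} (its two cases, \eqref{fulllatticethetadecay} and \eqref{mixmibabi1moretime}, depending on whether $\{\frac{\Tilde{H}\Tilde{\gamma}}{\Tilde{h}}\}$ equals $0$, $\frac12$, or a generic value) gives the leading asymptotic: the $v^{-1/2}$ from the automorphy factor $(hw'+a)^{1/2}\asymp v^{-1/2}$ and the exponential $e^{-\frac{\pi i}{b_1\Tilde H^2 w}(\{\cdot\}^2+\frac14-\{\cdot\})}$ coming from the $q_0$-power in \eqref{mixmibabi1moretime} (with $k$ essentially $\Tilde h/\Tilde\gamma$ or its relevant analogue), all other factors being $O(1)$, which is exactly the meaning of $\dot\sim$.

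The main obstacle I expect is the bookkeeping of the two independent reductions — one for the modular argument $b_1\tau$ (producing $\Tilde H$) and one for the elliptic/Jacobi argument $a_1\tau$ (producing $\Tilde\gamma,\Tilde h$) — and verifying that after combining \eqref{jacobitranstheta} and \eqref{elliptictranstheta} the net exponent of the local parameter collapses to precisely $-\frac{1}{b_1\Tilde H^2 w}\big(\{\frac{\Tilde H\Tilde\gamma}{\Tilde h}\}^2+\frac14-\{\frac{\Tilde H\Tilde\gamma}{\Tilde h}\}\big)$; the cross terms $e^{\pi i c z^2/(c\tau+d)}$ and the Gauss-sum-type multipliers must be checked to contribute only $O(1)$ in the cone $|u-\frac ph|\ll v$. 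A secondary nuisance is the case distinction: when $\gcd(h,a_1p\bmod h)$ makes $\Tilde\gamma/\Tilde h\in\{0,\frac12\}$ one must invoke \eqref{fulllatticethetadecay} rather than \eqref{mixmibabi1moretime}, but one checks the stated formula still holds since for $\{\cdot\}=0$ the exponent $\{\cdot\}^2+\frac14-\{\cdot\}=\frac14$ matches the $q_0^{1/8}$ in \eqref{fulllatticethetadecay} after the $b_1\Tilde H^2$ scaling, and similarly for $\{\cdot\}=\frac12$. Once these are settled the result follows; I would then remark that the analogous bound for $\vartheta(\tfrac12+a_1\tau;b_1\tau)$ and for the reciprocals (needed later for $T(\tau)$) follows by the same argument with the shift absorbed into the elliptic transformation.
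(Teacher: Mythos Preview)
Your proposal is correct and follows essentially the same route as the paper: apply the Jacobi modular transformation \eqref{jacobitranstheta} to move the cusp to $i\infty$, use the elliptic shift \eqref{elliptictranstheta} to reduce the first argument modulo the lattice, and then read off the leading term from the triple product (equivalently, from Lemma~\ref{thetatestimateprelim}). The one tactical difference is that the paper does not start with a matrix $\left(\begin{smallmatrix}p&b\\h&a\end{smallmatrix}\right)$ acting on $\tau$; since the modular variable is $b_1\tau$, whose value at the cusp is $\Tilde{\Gamma}/\Tilde{H}$ in lowest terms, the paper instead writes $b_1\tau = b_1w + \Tilde{\Gamma}/\Tilde{H}$ with $w=\tau-\tfrac{p}{h}$ and chooses $A=\left(\begin{smallmatrix}\Tilde{\Gamma}&b\\\Tilde{H}&d\end{smallmatrix}\right)\in\mathrm{SL}_2(\mathbb{Z})$ together with $\sigma:=-\frac{1}{b_1\Tilde{H}^2 w}-\frac{d}{\Tilde{H}}$, so that $A(\sigma)=b_1w+\Tilde{\Gamma}/\Tilde{H}$ exactly. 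This sidesteps the issue you flagged (that $b_1\tau$ is not simply a rational multiple of $w'$ under your $A$) and makes the bookkeeping of the cross term $e^{\pi i\Tilde{H}z^2/(\Tilde{H}\sigma+d)}$ and the elliptic shift by $\lfloor\Tilde{H}\Tilde{\gamma}/\Tilde{h}\rfloor$ more transparent; but the substance of the argument is the same as what you outline.
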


\begin{Remark}
\normalfont Notice that if $h=0$, that is, we approach the origin, then 
$$ \Tilde{\gamma} = 0, \;\Tilde{h}=1,\;\textnormal{and}\; \Tilde{H}=1.$$ As a result, we recover the same exponential term $q_0^{\frac{1}{8b_1}}$ as in Prop. \ref{thetatestimateprelim}.
\end{Remark}

Similarly, we have to address $\vartheta$-functions with a shifted elliptic variable.
\begin{Lemma}\label{a1b1half}
Let $\tau := u+iv$ and suppose that $\left|u-\frac{p}{h}\right|$ is uniformly bounded by $v$. Let $a_1$ and $b_1$ be non-zero natural numbers with $a_1<b_1$. Then as $v\to 0$,
\begin{align*}
    \vartheta\left(\frac{1}{2}+a_1\tau;b_1\tau\right)&\;\dot{\sim}\; v^{-\frac{1}{2}}e^{-\frac{\pi i}{b_1\Tilde{H}^2w}\left(\left\{\frac{\Tilde{H} \phi}{\omega}\right\}^2 +\frac{1}{4}-\left\{\frac{\Tilde{H}\phi}{\omega}\right\}\right)},
\end{align*}
where $$\phi:= \frac{h +2a_1p\pmod{2h}}{\gcd(2h,h +2a_1p\pmod{2h})} \hspace{5mm}\textnormal{and}\hspace{5mm}\omega:= \frac{2h}{\gcd(2h,h +2a_1p\pmod{2h})}.$$ 
\end{Lemma}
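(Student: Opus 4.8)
The plan is to prove Lemma~\ref{a1b1half} by the same strategy that underlies Lemma~\ref{a1b1}: push the modulus $b_1\tau$ to the cusp $i\infty$ by a modular transformation and read off the leading term from the Jacobi product \eqref{thetaasjacprod}, the only new feature being the way the shift by $\frac12$ changes the frozen rational part of the elliptic variable. Write $w := \tau - \frac{p}{h} \to 0$ for the local variable at the cusp, and split the shifted elliptic variable and the modulus into their rational ``frozen'' parts and their vanishing parts,
\[
\tfrac12 + a_1\tau = \frac{\phi}{\omega} + a_1 w, \qquad b_1\tau = \frac{\Tilde{\Gamma}}{\Tilde{H}} + b_1 w,
\]
where $\frac{\phi}{\omega} = \frac{h + 2a_1 p}{2h}$ and $\frac{\Tilde{\Gamma}}{\Tilde{H}} = \frac{b_1 p}{h}$, each in lowest terms; this is exactly how the integers $\phi,\omega$ of the statement (and the $\Tilde{\Gamma},\Tilde{H}$ inherited from Lemma~\ref{a1b1}) arise. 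The point is that replacing the elliptic variable $a_1\tau$ of Lemma~\ref{a1b1} by $\tfrac12 + a_1\tau$ replaces the frozen part $\frac{a_1 p}{h}$ by $\frac{h + 2a_1 p}{2h}$, and everything else goes through verbatim.

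I would then choose $\gamma = \sm{*}{*}{\Tilde{H}}{-\Tilde{\Gamma}} \in \mathrm{SL}_2(\mathbb{Z})$, so that $\gamma$ sends the cusp $\frac{\Tilde{\Gamma}}{\Tilde{H}}$ to $i\infty$, the automorphy denominator $\Tilde{H}(b_1\tau) - \Tilde{\Gamma}$ equals $\Tilde{H} b_1 w$, and the transformed modulus is $\tau' := \gamma(b_1\tau) = -\tfrac{1}{\Tilde{H}^2 b_1 w}\bigl(1+o(1)\bigr) \to i\infty$. Applying the Jacobi transformation \eqref{jacobitranstheta} of Remark~\ref{Jacobipropoftheta} expresses $\vartheta\bigl(\tfrac12 + a_1\tau; b_1\tau\bigr)$ as a bounded multiplier times $(\Tilde{H} b_1 w)^{-1/2}\, e^{-\pi i (\frac12 + a_1\tau)^2/(b_1 w)}\, \vartheta\bigl(\tfrac{\frac12 + a_1\tau}{\Tilde{H} b_1 w}; \tau'\bigr)$; the prefactor is $\asymp |w|^{-1/2} \asymp v^{-1/2}$ and, to leading order, the Gaussian exponential is $e^{-\pi i (\phi/\omega)^2/(b_1 w)} = e^{\pi i (\Tilde{H}\phi/\omega)^2 \tau'}$. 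The remaining task is to estimate $\vartheta(z';\tau')$, where $z' = \tfrac{\frac12 + a_1\tau}{\Tilde{H} b_1 w} = -\tfrac{\Tilde{H}\phi}{\omega}\tau' + O(1)$ and $\tau' \to i\infty$: I would reduce $z'$ into a fundamental strip by subtracting $\lfloor \Tilde{H}\phi/\omega\rfloor$ copies of $-\tau'$ via the elliptic transformation \eqref{elliptictranstheta}, and then read off the dominant term of the reduced theta from the Jacobi product \eqref{thetaasjacprod}. A short computation collapses the three contributions — $(\Tilde{H}\phi/\omega)^2/2$ from the Gaussian factor, the quadratic phases from \eqref{elliptictranstheta}, and the $\frac18$ together with the $\zeta^{-1/2}$-type factor from \eqref{thetaasjacprod} — into the single nome power $\tfrac12\bigl(\{\Tilde{H}\phi/\omega\} - \tfrac12\bigr)^2$, and since $\{x\}^2 + \tfrac14 - \{x\} = (\{x\} - \tfrac12)^2$ and $\tau' = -\tfrac{1}{\Tilde{H}^2 b_1 w}(1+o(1))$, this is exactly $e^{-\frac{\pi i}{b_1 \Tilde{H}^2 w}\left(\{\Tilde{H}\phi/\omega\}^2 + \frac14 - \{\Tilde{H}\phi/\omega\}\right)}(1+o(1))$. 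Because we work only up to the relation $\dot{\sim}$, the multiplier $\chi(\gamma)$, the bounded phases (such as $e^{-2\pi i (\phi/\omega)a_1}$), and the $o(1)$ corrections are all absorbed.

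I expect the main obstacle to be arithmetic bookkeeping rather than anything conceptual. One must carry out three reductions consistently — reducing $b_1 p \bmod h$ and $h + 2a_1 p \bmod 2h$ to lowest terms, choosing $\gamma$ compatibly with the first, and then selecting the correct integer translate $\lfloor \Tilde{H}\phi/\omega\rfloor$ in the elliptic step — so that the surviving dominant term genuinely carries the fractional part $\{\Tilde{H}\phi/\omega\}$ and not a competing representative; here it helps that $\{-x\} - \tfrac12 = -(\{x\} - \tfrac12)$, so the choice of floor versus ceiling does not affect the final exponent. A secondary, routine point is to confirm that every discarded theta factor is truly lower order: this follows because the nome $e^{2\pi i \tau'}$ of the transformed theta tends to $0$ as $w \to 0$ in a cone, so each such factor equals $1 + o(1)$. (Alternatively, one could deduce the lemma from Lemma~\ref{a1b1} by writing $\vartheta(\tfrac12 + a_1\tau; b_1\tau)$ as an explicit power of $q$ times $\vartheta(2a_1\tau; 2b_1\tau)/\vartheta(a_1\tau; b_1\tau)$ and a single-variable theta in $q^{b_1}$, but this only relocates the same bookkeeping.)
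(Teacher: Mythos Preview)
Your proposal is correct and follows essentially the same approach as the paper: the paper's proof consists of the single observation that $\tfrac12 + a_1\tau \;\dot{=}\; \tfrac{\phi}{\omega} + a_1 w$, after which ``the proof is identical to the proof of [Lemma~\ref{a1b1}] with $\Tilde{\gamma}\mapsto \phi$ and $\Tilde{h}\mapsto \omega$,'' which is exactly what you do. Your write-up expands the details of that reused computation (the modular transformation sending the cusp to $i\infty$, the elliptic reduction by $\lfloor \Tilde{H}\phi/\omega\rfloor$, and reading off the Jacobi product), but the route is the same.
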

\begin{Remark}
\normalfont Note that if $p=0$, 
$$\phi= 1,\; \omega = 2,\;\textnormal{and}\; \Tilde{H}=1.$$ Therefore,
the main exponential term cancels and we are only left with the polynomial growth which we previously saw in Prop. \ref{thetatestimateprelim}.
\end{Remark}

\begin{proof}[Proof of Thm. \ref{a1b1}]
Let $$A:= \begin{pmatrix}\Tilde{\Gamma}&b\\\Tilde{H}&d \end{pmatrix},$$ where $b,d$ are chosen such that $\det(A) = 1$ which is allowed since $\Tilde{\Gamma}$ and $\Tilde{H}$ are relatively prime. Let $w:= \tau+\frac{p}{h}.$ Define $$\sigma:= -\frac{1}{b_1\Tilde{H}^2w}-\frac{d}{\Tilde{H}}.$$ Notice that $$\lim_{v\to 0}A(\sigma) = \lim_{v\to 0}\frac{\Tilde{\Gamma}}{\Tilde{H}}+b_1w =\frac{\Tilde{\Gamma}}{\Tilde{H}}. $$
Notice that applying the Jacobi transformations implies
$$\vartheta(a_1\tau;b_1\tau)\;\dot{=}\;\vartheta\left(a_1w+\frac{\Tilde{\gamma}}{\Tilde{h}} ;b_1w+\frac{\Tilde{\Gamma}}{\Tilde{h}}\right). $$
We want to study this function as $v\to 0$. We can write
\begin{align}\label{eq1}
    \vartheta\left(a_1w+\frac{\Tilde{\gamma}}{\Tilde{h}} ;b_1w+\frac{\Tilde{\Gamma}}{\Tilde{h}}\right) = \vartheta\left(\frac{z}{\Tilde{H}\sigma +d};A(\sigma)\right),
\end{align}
where $$ z:= (\Tilde{H}\sigma+d)\left(a_1w+\frac{\Tilde{\gamma}}{\Tilde{h}}\right) = -\frac{\Tilde{\gamma}}{\Tilde{h}\Tilde{H}b_1w}+ O(1).$$
Using the first Jacobi transform on Eq. \eqref{eq1} yields
\begin{align*}
    \vartheta\left(\frac{z}{\Tilde{H}\sigma +d};A(\sigma)\right) \;&\dot{=}\; v^{-\frac{1}{2}}e^{\frac{\pi i \Tilde{H}z^2}{\Tilde{H}\sigma+d}}\vartheta\left(z;\sigma\right)\\
    &=v^{-\frac{1}{2}}e^{\frac{\pi i \Tilde{H}z^2}{\Tilde{H}\sigma+d}}\vartheta\left(a_1\Tilde{H}\sigma w  +\frac{\Tilde{H}\Tilde{\gamma}}{\Tilde{h}}\sigma +a_1dw +\frac{d\Tilde{\gamma}}{\Tilde{h}}; \sigma\right)\\
    &\dot{=}\; v^{-\frac{1}{2}}e^{\frac{\pi i \Tilde{H}z^2}{\Tilde{H}\sigma+d}}e^{-\pi i \left(\floor{\frac{\Tilde{H}\Tilde{\gamma}}{\Tilde{h}}}^2\sigma +2\floor{\frac{\Tilde{H}\Tilde{\gamma}}{\Tilde{h}}}\left(\Lambda +\left\{\frac{\Tilde{H}\Tilde{\gamma}}{\Tilde{h}}\right\}\sigma\right)\right)} \vartheta\left(\left\{\frac{d\Tilde{\gamma}}{\Tilde{h}}\right\}+ \left\{\frac{\Tilde{H}\Tilde{\gamma}}{\Tilde{h}}\right\}\sigma+\Lambda; \sigma\right),
\end{align*}
where in the last step we used the elliptic property (the second Jacobi transformation) and we defined $\Lambda:= a_1w\left(\Tilde{H}\sigma+d\right).$ Notice that $\Lambda = O(1).$ We now apply the Jacobi triple product to find that
\begin{align*}
    \vartheta\left(\left\{\frac{d\Tilde{\gamma}}{\Tilde{h}}\right\}+ \left\{\frac{\Tilde{H}\Tilde{\gamma}}{\Tilde{h}}\right\}\sigma+\Lambda; \sigma\right) \;&\dot{=}\;e^{\frac{\pi i }{4}\sigma}e^{-\pi i(\{\frac{d\Tilde{\gamma}}{\Tilde{h}}\}+ \{\frac{\Tilde{H}\Tilde{\gamma}}{\Tilde{h}}\}\sigma+\Lambda)}\\
    &\hspace{15mm} \times \left(e^{2\pi i \sigma};e^{2\pi i \sigma}\right)_\infty \left(e^{2\pi i(\{\frac{d\Tilde{\gamma}}{\Tilde{h}}\}+ \{\frac{\Tilde{H}\Tilde{\gamma}}{\Tilde{h}}\}\sigma+\Lambda)};e^{2\pi i \sigma}\right)_\infty\\
    &\hspace{15mm} \times \left(e^{2\pi i \sigma}e^{-2\pi i(\{\frac{d\Tilde{\gamma}}{\Tilde{h}}\}+ \{\frac{\Tilde{H}\Tilde{\gamma}}{\Tilde{h}}\}\sigma+\Lambda)};e^{2\pi i \sigma}\right)_\infty\\
    &\dot{\sim}\; e^{\frac{\pi i }{4}\sigma}e^{-\pi i(\{\frac{d\Tilde{\gamma}}{\Tilde{h}}\}+ \{\frac{\Tilde{H}\Tilde{\gamma}}{\Tilde{h}}\}\sigma+\Lambda)} \;\dot{\sim} \; e^{-\frac{\pi i}{b_1\Tilde{H}^2w}\left(\frac{1}{4}-\{\frac{\Tilde{H}\Tilde{\gamma}}{\Tilde{h}}\}\right)}.
\end{align*}
Furthermore, noting that $\sigma w =O(1),$
\begin{align*}
    e^{\frac{\pi i \Tilde{H}z^2}{\Tilde{H}\sigma+d}}e^{-\pi i \left(\floor{\frac{\Tilde{H}\Tilde{\gamma}}{\Tilde{h}}}^2\sigma +2\floor{\frac{\Tilde{H}\Tilde{\gamma}}{\Tilde{h}}}\left(\Lambda+\{\frac{\Tilde{H}\Tilde{\gamma}}{\Tilde{h}}\}\sigma\right)\right)} \;\dot{\sim}\; e^{-\frac{\pi i \Tilde{\gamma}^2}{\Tilde{h}^2b_1w}}e^{\frac{\pi i}{b_1\Tilde{H}^2w}\left(\floor{\frac{\Tilde{H}\Tilde{\gamma}}{\Tilde{h}}}^2   +2\floor{\frac{\Tilde{H}\Tilde{\gamma}}{\Tilde{h}}}\{\frac{\Tilde{H}\Tilde{\gamma}}{\Tilde{h}}\} \right)}.
\end{align*}
Combining the last two equations gives the desired result.\end{proof}
\begin{proof}[Proof of Thm. \ref{a1b1half}]
We begin again by writing $w:= \tau +\frac{h}{k}.$ Then, by Prop. \ref{thetatestimateprelim}
$$ \vartheta\left(\frac{1}{2}+a_1\tau;b_1\tau\right) \dot{=} \;\vartheta\left(\frac{\phi}{\omega}+a_1\tau;\frac{\Tilde{\Gamma}}{\Tilde{H}}+b_1\tau\right).$$The proof is identical to the proof of Thm. \ref{a1b1} with $\Tilde{\gamma}\mapsto \phi$ and $\Tilde{h}\mapsto \omega$. \end{proof}
We now need to find similar formulas for theta functions whose elliptic variable is constant. We prove the following.
\begin{Lemma}\label{genericgrowthhalfperiod}
Let $\tau:= u+iv$. Suppose $\left|u-\frac{p}{h}\right|$ is uniformly bounded by $v$ for all $n$. Then we have  as $n\to \infty$
\begin{align*}
    \vartheta\left(\frac{1}{2}; \tau\right) \dot{\sim} \ \begin{cases} v^{-\frac{1}{2}}e^{-\frac{\pi }{4h^2v}}&\textnormal{if} \; h \;\textnormal{even}, \\
     v^{-\frac{1}{2}}&\textnormal{if} \; h \;\textnormal{odd} .\end{cases}
\end{align*}
\end{Lemma}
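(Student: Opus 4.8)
The plan is to reduce the statement to the behaviour of the Dedekind $\eta$-function near the cusp $\frac{p}{h}$, exactly as in the proof of Lemma~\ref{halfperiosthetaneartauhalf}. Set $w := \tau - \frac{p}{h}$ and use the product identity $\vartheta\!\left(\frac12;\tau\right) = 2\,\eta(2\tau)^2/\eta(\tau)$ already invoked in that proof, so that it suffices to estimate $\eta(\tau)$ and $\eta(2\tau)$ as $w\to 0$ inside the cone $\left|u-\frac{p}{h}\right|\ll v$.

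First I would treat $\eta(\tau)$. Since $\frac{p}{h}$ is in lowest terms, pick $A=\sm{p}{b}{h}{d}\in \mathrm{SL}_2(\mathbb{Z})$ and put $\sigma := A^{-1}\tau = -\frac{d}{h}-\frac{1}{h^2 w}$, which tends to $i\infty$; here $h\sigma+d = -\frac{1}{hw}$, so $(h\sigma+d)^{1/2}\;\dot{\sim}\;v^{-1/2}$ in the cone. The $\eta$-transformation law (with the multiplier $\varepsilon$ recalled above) together with $\eta(\sigma)\sim e^{\pi i\sigma/12}$ then gives $\eta(\tau)\;\dot{\sim}\;v^{-1/2}e^{-\frac{\pi i}{12 h^2 w}}$, all the $\varepsilon(A)$- and $e^{-\pi i d/(12h)}$-type factors being $O(1)$ and absorbed into $\dot{\sim}$. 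Applying the identical computation to $\eta(2\tau)$ means analysing $\eta$ near the cusp $\frac{2p}{h}$, which must first be put in lowest terms — and this is where the parity of $h$ enters: if $h$ is odd then $\gcd(2p,h)=1$ and the denominator is still $h$, whereas if $h$ is even then $p$ is odd, $\gcd(2p,h)=2$, and the denominator drops to $\frac{h}{2}$. Carrying out the transformation with the reduced cusp (and $2w$ in place of $w$) yields $\eta(2\tau)\;\dot{\sim}\;v^{-1/2}e^{-\frac{\pi i}{24 h^2 w}}$ when $h$ is odd and $\eta(2\tau)\;\dot{\sim}\;v^{-1/2}e^{-\frac{\pi i}{6 h^2 w}}$ when $h$ is even.

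Finally I would substitute into $\vartheta\!\left(\frac12;\tau\right) = 2\eta(2\tau)^2/\eta(\tau)$. For $h$ odd the exponents cancel, $2\cdot\!\left(-\frac{1}{24 h^2 w}\right)-\left(-\frac{1}{12 h^2 w}\right)=0$, giving $\vartheta\!\left(\frac12;\tau\right)\;\dot{\sim}\;v^{-1/2}$. For $h$ even one gets $2\cdot\!\left(-\frac{1}{6 h^2 w}\right)-\left(-\frac{1}{12 h^2 w}\right)=-\frac{1}{4 h^2 w}$, hence $\vartheta\!\left(\frac12;\tau\right)\;\dot{\sim}\;v^{-1/2}e^{-\frac{\pi i}{4 h^2 w}}$; the cone hypothesis $\left|u-\frac{p}{h}\right|\ll v$ (which gives $|w|\asymp v$ and $\mathrm{Re}\!\left(-\tfrac{\pi i}{4h^2 w}\right)=-\tfrac{\pi v}{4h^2|w|^2}\asymp-\tfrac{\pi}{4h^2 v}$) lets me rewrite this in the stated form $v^{-1/2}e^{-\frac{\pi}{4 h^2 v}}$. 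As a consistency check, the case $h=2,\ p=1$ reproduces the exponential $e^{-\pi i/(16(\tau-1/2))}$ of Lemma~\ref{halfperiosthetaneartauhalf}, and $h=1,\ p=0$ reproduces $\vartheta\!\left(\frac12;\tau\right)\dot{\sim}v^{-1/2}$ from Lemma~\ref{thetatestimateprelim}.

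The main obstacle is bookkeeping rather than conceptual: one must keep careful track of the $\eta$-multipliers and of all $O(1)$ automorphy factors that get swallowed by $\dot{\sim}$, and, crucially, verify the $\gcd(2p,h)$ computation that produces the dichotomy. An alternative is to imitate the proof of Lemma~\ref{a1b1half} directly, writing $\vartheta\!\left(\frac12;\tau\right)=\vartheta\!\left(\frac12+0\cdot\tau;\,1\cdot\tau\right)$ and applying the Jacobi transformations of Remark~\ref{Jacobipropoftheta}; there the degenerate index $a_1=0$ forces one to reduce the elliptic variable $\frac{h}{2}\sigma+\frac{d}{2}$ modulo $\mathbb{Z}\sigma+\mathbb{Z}$, and whether the half-integer $\frac{h}{2}$ contributes a $\frac12\sigma$ to the reduced variable — i.e. whether $h$ is odd — governs exactly the same case split, so either route is acceptable.
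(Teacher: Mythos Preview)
Your proof is correct but takes a different route from the paper's. You factor through the identity $\vartheta\!\left(\tfrac12;\tau\right)=2\eta(2\tau)^2/\eta(\tau)$ and analyse each $\eta$-factor at the cusp $\tfrac{p}{h}$ via the $\eta$-transformation law; the parity dichotomy then emerges from the $\gcd(2p,h)$ computation governing the reduced denominator of the cusp $\tfrac{2p}{h}$. The paper instead works directly with $\vartheta$: it applies the Jacobi transformation of Remark~\ref{Jacobipropoftheta} to $\vartheta\!\left(\tfrac12;A(\sigma)\right)$ with $A=\sm{p}{b}{h}{d}$, obtaining $\vartheta\!\left(\tfrac{h\sigma+d}{2};\sigma\right)$, and then reduces the elliptic variable $\tfrac{h}{2}\sigma+\tfrac{d}{2}$ modulo $\mathbb{Z}\sigma+\mathbb{Z}$ --- exactly the ``alternative'' you sketch in your last paragraph. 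When $h$ is even (forcing $d$ odd) this lands on $\vartheta\!\left(\tfrac12;\sigma\right)$; when $h$ is odd it lands on $\vartheta\!\left(\tfrac{\sigma}{2};\sigma\right)$ or $\vartheta\!\left(\tfrac{\sigma+1}{2};\sigma\right)$ according to the parity of $d$, and in either sub-case the triple product gives only polynomial growth. Your $\eta$-quotient argument is arguably cleaner here, since it avoids the secondary case split on the parity of $d$ and reuses the machinery of Lemma~\ref{halfperiosthetaneartauhalf} verbatim; the paper's approach has the advantage of fitting into the same template as Lemmas~\ref{a1b1} and~\ref{a1b1half}, so no separate identity is needed. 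Both routes arrive at $e^{-\frac{\pi i}{4h^2 w}}$ in the even case, and both then pass to $e^{-\frac{\pi}{4h^2 v}}$ via the cone hypothesis, as you do.
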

\proof
Let $A:= \begin{pmatrix}p&b\\h&d \end{pmatrix}$ with $b,d$ so that $A\in \textnormal{SL}_2(\mathbb{Z})$, which exist since $(p,h) = 1$. Define $z:= \frac{h\sigma+d}{2}$ with $\sigma:= -\frac{1}{h^2w}-\frac{d}{h}$ for $ w \in \mathbb{H}$. Notice that 
\begin{align*}
    A(\sigma) = \frac{p}{h}+w(pd-hb) = \frac{p}{h}+w.
\end{align*}
Thus, $\displaystyle{\lim_{w\to 0}}A(\sigma) = \frac{p}{h}$. Regardless of whether $h$ is even or odd, using Eq. \eqref{jacobitranstheta} yields 
\begin{align}\label{thetahalfusecases}
    \vartheta\left(\frac{1}{2}; A(\sigma)\right)& \dot{=}\;\; (h\sigma+d)^{\frac{1}{2}}e^{\pi i\frac{h^2\sigma+hd}{4}} \vartheta\left(\frac{h\sigma+d}{2};\sigma \right).
\end{align}
If $h$ is even, $d$ must be odd. Therefore, Eq. \eqref{elliptictranstheta} implies that Eq. \eqref{thetahalfusecases} reduces to,
\begin{align*}
   \vartheta\left(\frac{1}{2}; A(\sigma)\right)&\dot{=}\;\; \left(-\frac{1}{hw}\right)^{\frac{1}{2}}\vartheta\left(\frac{1}{2};\sigma \right),
\end{align*}
which upon using the Jacobi triple product and taking the limit $w \to 0$ gives
\begin{align*}
      \vartheta\left(\frac{1}{2}; A(\sigma)\right) \dot{\sim}  \left(-\frac{1}{hw}\right)^{\frac{1}{2}}e^{-\frac{\pi i}{4h^2w}}.
\end{align*}
Subbing in $w = \tau - \frac{p}{h}$ proves the first claim. \par The second case when $h$ is odd has two separate situations to contend with, depending on whether $d$ is odd or even. Assume first that $d$ is odd. Then $\frac{h}{2}$ and $\frac{d}{2}$ are both half integers. Therefore using Remark \ref{Jacobipropoftheta} as before, we have
\begin{align*}
\begin{split}\label{doddhalfperiodgenericcusp}
     \vartheta\left(\frac{1}{2}; A(\sigma)\right)& \dot{=}\;\; (h\sigma+d)^{\frac{1}{2}}e^{\pi i\frac{h^2\sigma}{4}}\vartheta\left(\left(\floor[\bigg]{\frac{h}{2}}+\frac{1}{2}\right)\sigma + \floor[\bigg]{\frac{d}{2}} +\frac{1}{2}; \sigma\right)\\
     &\dot{=}\;\; (h\sigma+d)^{\frac{1}{2}}e^{-\pi i\sigma\left(\floor{\frac{h}{2}}^2+\floor{\frac{h}{2}}-\frac{h^2}{4} \right)}\vartheta\left(\frac{\sigma+1}{2}; \sigma\right),
\end{split}
\end{align*}
where $\floor{\bullet}$ is the floor function. Using the Jacobi product again and taking the limit as $w \to 0$, we have 
\begin{align}
    \vartheta\left(\frac{1}{2}; A(\sigma)\right)&\dot{\sim}  (-hw)^{-\frac{1}{2}}e^{\frac{\pi i}{h^2w}\left(-\frac{h^2}{4}+\floor{\frac{h}{2}}^2+\floor{\frac{h}{2}}+\frac{1}{4}\right)} = (-2w)^{-\frac{1}{2}}.
\end{align}
The last step follows since $h$ odd implies $\floor{\frac{h}{2}} = \frac{h-1}{2}$. If $d$ is even, the only thing that changes in Eq. \eqref{doddhalfperiodgenericcusp} is that  $\vartheta\left(\frac{\sigma + 1}{2}; \sigma \right)$  becomes $\vartheta\left(\frac{\sigma  }{2}; \sigma\right)$, which both yield the same estimate up to a constant factor as $w\to 0$ by examining the triple product representations.\qed

With the previous lemma, we have the following useful corollary.
\begin{Corollary}\label{24halfnearph}
Let $\tau:= u+iv$. Suppose $|u|$ is uniformly bounded by $v$ for all $n$. Then we have the lower bound as $n\to \infty$
$$ \vartheta\left(\frac{1}{2}; 24\tau\right)\gg  v^{-\frac{1}{2}}e^{-\frac{\pi}{96\Tilde{h}^2v}},$$
where $\Tilde{h}:= \frac{h}{g}$ where $g:= \gcd(24p\pmod{h},h)$.
\end{Corollary}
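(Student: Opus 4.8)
The plan is to read off the bound from Lemma~\ref{genericgrowthhalfperiod} after the rescaling $\tau\mapsto 24\tau$; as in the preceding lemmas I take the hypothesis to be that $\left|u-\tfrac{p}{h}\right|$ is uniformly bounded by $v$. First I would note that if $\tau=u+iv$ lies within $\ll v$ of the cusp $\tfrac{p}{h}$, then $24\tau$ has imaginary part $24v$ and lies within $\ll 24v$ of $\tfrac{24p}{h}$. Writing $\tfrac{24p}{h}$ in lowest terms, its denominator is exactly $\widetilde h=\tfrac{h}{g}$ with $g=\gcd(24p,h)=\gcd\!\left(24p\bmod h,\ h\right)$, since $g\mid 24p$; so $24\tau$ approaches the reduced cusp $\tfrac{\widetilde p}{\widetilde h}$ (with $\widetilde p:=24p/g$) inside the admissible window, and Lemma~\ref{genericgrowthhalfperiod} applies to $\vartheta\!\left(\tfrac12;24\tau\right)$ with $h$ and $v$ there replaced by $\widetilde h$ and $24v$.

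Next I would split on the parity of $\widetilde h$. If $\widetilde h$ is even, the lemma gives $\vartheta\!\left(\tfrac12;24\tau\right)\dot{\sim}(24v)^{-1/2}e^{-\frac{\pi}{4\widetilde h^2(24v)}}=(24v)^{-1/2}e^{-\frac{\pi}{96\widetilde h^2 v}}$, which is $\gg v^{-1/2}e^{-\frac{\pi}{96\widetilde h^2 v}}$. If $\widetilde h$ is odd, it gives $\vartheta\!\left(\tfrac12;24\tau\right)\dot{\sim}(24v)^{-1/2}\gg v^{-1/2}$; since $v\to 0$ as $n\to\infty$ we have $e^{-\frac{\pi}{96\widetilde h^2 v}}\le 1$, so a fortiori $\vartheta\!\left(\tfrac12;24\tau\right)\gg v^{-1/2}e^{-\frac{\pi}{96\widetilde h^2 v}}$. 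Either way the claimed lower bound holds, so no estimation beyond invoking Lemma~\ref{genericgrowthhalfperiod} is needed; the odd-$\widetilde h$ case enters only through the trivial inequality $e^{-\frac{\pi}{96\widetilde h^2 v}}\le 1$.

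The only step that genuinely requires care is the interpretation of the symbol $\dot{\sim}$: for the conclusion to be a true lower bound $\gg$, the implied constant in Lemma~\ref{genericgrowthhalfperiod} must be bounded away from $0$, not merely $O(1)$. So I would go back to the proof of that lemma and check that the constant produced there --- the value of the relevant Jacobi triple product together with the $\vartheta$-multiplier, in the limit as the nome tends to $0$ --- is a fixed nonzero complex number, so that passing to absolute values and then to $\gg$ is legitimate. I expect this to be the main (and only real) obstacle; everything else is a routine substitution.
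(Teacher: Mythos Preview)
Your proposal is correct and essentially identical to the paper's proof: both reduce the problem to applying Lemma~\ref{genericgrowthhalfperiod} at the cusp $\tfrac{24p}{h}$ in lowest terms (denominator $\widetilde h$) and then split on the parity of $\widetilde h$. The only cosmetic difference is that the paper first singles out the case $h\mid 24$ (where $24p/h\in\mathbb{Z}$ and one quotes Lemma~\ref{thetatestimateprelim} directly), whereas you absorb this into the odd-$\widetilde h$ branch since $\widetilde h=1$ there; your observation that one must check the implied constant in $\dot{\sim}$ is nonzero is a point the paper passes over silently.
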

\begin{proof}
If $h|24$, Prop. \ref{thetatestimateprelim} implies that $ \vartheta\left(\frac{1}{2}; 24\tau\right)$ is asymptotic to a polynomial in $\tau$, and thus the claim follows. If $h\not| 24$, defining $\tau := w+\frac{p}{h}$ and applying the transformations in Remark \ref{jacobitranstheta} gives that 
$$\vartheta\left(\frac{1}{2};24\tau\right)\dot{=}\vartheta\left(\frac{1}{2};24w+\frac{\Tilde{\gamma}}{\Tilde{h}}\right), $$
where $\Tilde{\gamma}:= \frac{24p \pmod{h}}{g}$.  The machinery of the the proof of Lemma \ref{genericgrowthhalfperiod} with $p:= \Tilde{\gamma}$ and $w:= 24w$ now applies. If $\Tilde{h}$ is odd, we are done. If $\Tilde{h}$ is even, we find that as $n\to \infty$ in a cone
$$ \vartheta\left(\frac{1}{2}; 24\tau\right) \dot{\sim}  \left(-24\tilde{h}w\right)^{-\frac{1}{2}}e^{-\frac{\pi i}{96 \Tilde{h}^2w}}\gg v^{-\frac{1}{2}}e^{-\frac{\pi i}{96\Tilde{h}^2v}},$$
where the last step follows from the fact that $\mathrm{Im}(w) = \mathrm{Im}(\tau) = v.$\end{proof}

\subsection{Proof of Lemma \ref{boundforeverythingfarfromminus1}}
\hspace{5mm}As stated at the beginning of this section, we claimed that there are only a finite number of cusps we need to check. The following proposition gives us a rough bound for this number, but it more importantly tells us that all of the cusps that could cause a large pole have $h$ even. As a reminder, we have already checked explicitly that $T(\tau)$ decays exponentially near $0$ in Lemma \ref{decaynear0T}, which means we do not have to investigate this case in the proof below.

\begin{Proposition}\label{roughcuspbound}
Let $\delta := \sqrt{192}$. With this choice, the only cusps that could lead to $T(\tau)$ having larger growth than that at $\frac{1}{2}$ are cusps $\frac{p}{h}$ with $h\leq24$ even. 
\end{Proposition}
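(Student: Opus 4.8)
The plan is to bound $T(\tau)$ near an arbitrary cusp $\tfrac{p}{h}$ by feeding the representation of Proposition \ref{bestrepofT} into the cusp--expansion estimates of Lemmas \ref{a1b1}, \ref{a1b1half}, \ref{genericgrowthhalfperiod} and Corollary \ref{24halfnearph}, and then to compare the resulting exponential growth rate with the one at $\tfrac12$ coming from Theorem \ref{mainterm}. (The estimate of Lemma \ref{logproductbound} is not needed here; it only enters afterwards, when this comparison is upgraded to Theorem \ref{boundforeverythingfarfromminus1}.) Throughout, $\tfrac ph$ is in lowest terms, $w:=\tau-\tfrac ph$, and we work in a cone $|u-\tfrac ph|\ll v$ with $v\to0$; since $\gcd(p,h)=1$ we may replace $\gcd(h,ap\bmod h)$ by $\gcd(h,a)$ everywhere.

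First I would read off the growth of each $\vartheta$-factor in $T(\tau)=\tfrac{4iq^{417/8}}{\vartheta(1/2;\tau)}\tfrac{\vartheta^{3}(96\tau;288\tau)}{\vartheta(1/2;24\tau)}\,\overline S(\tau)$. The numerator factors $\vartheta^{3}(96\tau;288\tau)$ and each $\vartheta(a\tau;16\tau),\ \vartheta(b\tau;96\tau)$ occurring in $\overline S(\tau)$ are $\ll v^{-1/2}$: by Lemma \ref{a1b1} their exponential parts are $e^{-\frac{\pi i}{b_{1}\widetilde H^{2}w}(\{\cdot\}-\frac12)^{2}}$, of modulus $e^{-\frac{\pi(\{\cdot\}-1/2)^{2}}{b_{1}\widetilde H^{2}}\cdot\frac{v}{|w|^{2}}}\le1$, and the four scalar $q$-powers multiplying the summands of $\overline S$ are $O(1)$ as $v\to0$. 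For the reciprocals of the denominator factors: Lemma \ref{genericgrowthhalfperiod} gives $\vartheta(\tfrac12;\tau)^{-1}\ll v^{1/2}e^{\pi/(4h^{2}v)}$ if $h$ is even and $\ll v^{1/2}$ if $h$ is odd; Corollary \ref{24halfnearph} gives $\vartheta(\tfrac12;24\tau)^{-1}\ll v^{1/2}e^{\pi\gcd(24,h)^{2}/(96h^{2}v)}$, and $\ll v^{1/2}$ when $h\mid24$; and Lemma \ref{a1b1half}, with $(\{\cdot\}-\tfrac12)^{2}\le\tfrac14$ and $\widetilde H=h/\gcd(96,h)$, gives $\vartheta(\tfrac12+c\tau;96\tau)^{-1}\ll v^{1/2}e^{\pi\gcd(96,h)^{2}/(384h^{2}v)}$ for each of the two such factors. (In each case the cited lemma's proof produces a two-sided estimate, so these reciprocal bounds are legitimate.) Multiplying, $|T(\tau)|\ll v^{-C}e^{E(p,h)/v}$, where for $h$ even
\[
E(p,h)\;\le\;\frac{\pi}{4h^{2}}\;+\;\frac{\pi\gcd(24,h)^{2}}{96\,h^{2}}\;+\;\frac{\pi\gcd(96,h)^{2}}{192\,h^{2}},
\]
the middle term being absent when $h\mid24$, and for $h$ odd the same bound holds with the first term dropped.

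Now Theorem \ref{mainterm} shows that the growth at $\tfrac12$ is of order $v^{-1/2}\bigl|Q_{0}^{-1/192}\bigr|\asymp v^{-1/2}e^{\pi/(96v)}$, i.e.\ of ``energy'' $\tfrac{\pi}{96}$, so $\tfrac ph$ can produce growth at least this large only if $E(p,h)\ge\tfrac{\pi}{96}$; it therefore suffices to prove $E(p,h)<\tfrac{\pi}{96}$ whenever $h$ is odd, or $h$ is even with $h>24$. The case $h=1$ is already contained in Lemma \ref{decaynear0T}. If $h$ is odd and $\ge5$ then $\gcd(24,h)=\gcd(96,h)=\gcd(3,h)\le3$, so $E(p,h)\le\tfrac{9\pi}{96h^{2}}+\tfrac{9\pi}{192h^{2}}=\tfrac{9\pi}{64h^{2}}<\tfrac{\pi}{96}$; and for $h=3$ one additionally uses $3\mid24$ to kill the middle term, leaving $E(p,3)\le2\cdot\tfrac{\pi}{4\cdot96}=\tfrac{\pi}{192}<\tfrac{\pi}{96}$. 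If $h$ is even with $h>24$, then $E(p,h)<\tfrac{\pi}{96}$ is equivalent to $24+\gcd(24,h)^{2}+\tfrac12\gcd(96,h)^{2}<h^{2}$; writing $g:=\gcd(96,h)\in\{2,4,6,8,12,16,24,32,48,96\}$ and using $g\mid h$ together with $h>24$ (which forces $h$ to be a sufficiently large multiple of $g$ in each case) one verifies this case by case --- crucially keeping $\gcd(24,h)$ and $\gcd(96,h)$ as \emph{separate} quantities, since e.g.\ for $h=32$ they equal $8$ and $32$ and conflating them would destroy the bound. The tightest instances are $h=g\in\{32,48,96\}$, where the left-hand side is $600,\,1752,\,5208$ against $h^{2}=1024,\,2304,\,9216$; all other cases are slack by a wide margin, which proves the proposition.

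The main obstacle is precisely this final numerical check at the ``sharp'' cusps: the crude energy $E(p,h)$ is comparable to $\tfrac{\pi}{96}$ exactly when $h\mid96$ with $h>24$, and at $h=3$, so one must be careful not to waste anything --- in particular one needs the exact values of $\gcd(24,h)$ and $\gcd(96,h)$ rather than bounding one by the other, and the refinement ``$h\mid24\Rightarrow\vartheta(\tfrac12;24\tau)$ is polynomial'' from Corollary \ref{24halfnearph} at $h=3$. The rest is routine: unwinding the $\vartheta$-transformations of Lemmas \ref{a1b1} and \ref{a1b1half} to confirm that the exponents have nonpositive real part on the numerator factors and the stated positive real part on the reciprocals of the denominator factors, and keeping track of the powers of $v$.
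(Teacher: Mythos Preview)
Your argument is correct, but it is genuinely different from the paper's. The paper does not use Lemmas \ref{a1b1} and \ref{a1b1half} at this stage at all; instead it bounds the entire block $D(\tau)=\vartheta^{3}(96\tau;288\tau)\overline S(\tau)$ uniformly on the minor arc via Lemma \ref{logproductbound} (and Remark \ref{remarkforproofof5cusps}), obtaining a cusp-independent bound of the shape $e^{\frac{5\pi}{4}\sqrt{n/12}(1-\frac{6}{\pi^{2}}\varepsilon)}$, and then couples this with the cusp-dependent bound for $G(\tau)=\vartheta(\tfrac12;\tau)^{-1}\vartheta(\tfrac12;24\tau)^{-1}$ coming from Lemma \ref{genericgrowthhalfperiod} and Corollary \ref{24halfnearph}. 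Choosing $\varepsilon=0.99$ (i.e.\ taking $M$ large) then rules out all odd $h$ and all even $h>24$ in one stroke.

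Your route instead feeds every $\vartheta$-factor, numerator and denominator alike, through the cusp-expansion Lemmas \ref{a1b1}, \ref{a1b1half}; this is precisely the machinery the paper reserves for the later, finer computation in Lemma \ref{Fboundedby2}. What you gain is that Lemma \ref{logproductbound} drops out entirely here and no artificial choice of $\varepsilon$ is needed; what you pay is the finite case check at the divisors $h\in\{32,48,96\}$ (and the small-$h$ odd case $h=3$), which the paper's uniform bound on $D$ sidesteps. Both arguments are sound; yours is somewhat more transparent about where the exponential contributions actually come from, while the paper's is more economical in that it recycles a single minor-arc estimate for the whole of $D$.
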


\proof
We use the notation from the proof of Theorem \ref{mainterm}, where we saw that we could write $T(\tau)$ as
\begin{align*}
    T(\tau) \dot{=} \frac{\vartheta^3(96\tau;288\tau)}{\vartheta\left(\frac{1}{2};\tau\right)\vartheta\left(\frac{1}{2};24\tau\right)} \overline{S}(\tau).
\end{align*}
Referring back to Remark \ref{remarkforproofof5cusps} we can use Lemma \ref{logproductbound} to bound the combination
\begin{align*}
 D(\tau):= \vartheta^3(96\tau;288\tau)\overline{S}(\tau).
\end{align*}
Recalling that each one of the four terms in $\overline{S}(\tau)$ is of the form
\begin{align*}
    \frac{\vartheta(\bullet;16\tau)\vartheta(\bullet; 96\tau)}{\vartheta\left(\frac{1}{2}+\bullet;96\tau\right)\vartheta\left(\frac{1}{2}+\bullet; 96\tau\right)},
\end{align*}
and that the bounds in Lemma \ref{logproductbound} only depend on the factor in the second slot, we find that

\begin{align*}
    D(\tau)&\ll n^{-\frac{3}{2}}e^{\sqrt{192}\sqrt{n}\left(\frac{9}{2\cdot 288\pi} + \frac{3}{2\cdot16\pi}+ \frac{9}{2\cdot96\pi}\right)\left(\frac{\pi^2}{6}-\varepsilon\right)}
    = n^{-\frac{3}{2}}e^{\sqrt{192}\sqrt{n}\left(\frac{5}{32\pi}\right)\left(\frac{\pi^2}{6}-\varepsilon\right)}\\
    &= n^{-\frac{3}{2}}e^{\frac{5\sqrt{192}\pi}{192}\sqrt{n}\left(1-\frac{6}{\pi^2}\varepsilon\right)} = n^{-\frac{3}{2}}e^{\frac{5\pi}{4}\sqrt{\frac{n}{12}}\left(1-\frac{6}{\pi^2}\varepsilon\right)} ,
\end{align*}
where $\varepsilon = 1-\frac{1}{\sqrt{1+M^2}}$. We define 
\begin{align*}
    G(\tau):= \frac{1}{\vartheta\left(\frac{1}{2}; \tau\right)\vartheta\left(\frac{1}{2}; 24\tau\right)}.
\end{align*}
If $h$ is odd, by Lemma \ref{genericgrowthhalfperiod} and  Cor. \ref{24halfnearph}, we have that
$$ G(\tau)\ll ve^{\frac{\pi}{2\Tilde{h}^2}\sqrt{\frac{n}{12}}}.$$ Using Lemma \ref{logproductbound} on $D$ gives
$$ G(\tau)D(\tau) \ll v^{-\frac{1}{2}}e^{\pi\sqrt{\frac{n}{12}}\left(\frac{1}{2\Tilde{h}^2}+\frac{5}{4}-\frac{15\varepsilon}{2\pi^2}\right)}.$$ We want to $$ \frac{1}{2\Tilde{h}^2}+\frac{5}{4}-\frac{15\varepsilon}{2\pi^2}<1.$$ If we chose $\varepsilon = 0.99$, then (for example) all $\Tilde{h}>0.998$ satisfy the above inequality. That is, the inequality is always true since $\Tilde{h}\geq 1.$
\par Thus, we only need to consider even $h$. Applying Lemma \ref{genericgrowthhalfperiod} in the even case and Prop. \ref{24halfnearph} gives

\begin{align*}
    G(\tau)\ll v e^{\pi\sqrt{192n}\left(\frac{1}{96\Tilde{h}^2}+\frac{1}{4h^2}\right)} = ve^{\pi\sqrt{192n}\left(\frac{h^2+24\Tilde{h}^2}{96(\Tilde{h}h)^2}\right)} \ll ve^{\pi\sqrt{\frac{n}{12}}\left(\frac{12\cdot25}{h^2}\right)},
\end{align*}
where in the last line we used that $h\leq 24\Tilde{h}.$ Combining this with the previous estimate for $D$, we find
\begin{align*}
   G(\tau)D(\tau)\ll  e^{\pi\sqrt{\frac{n}{12}}\left(\frac{5}{4}-\frac{15}{2\pi^2}\varepsilon+\frac{12\cdot25}{h^2}\right)}.
\end{align*}
We want that
$$ \frac{5}{4}-\frac{15}{2\pi^2}\varepsilon+\frac{12\cdot 25}{h^2}< 1, $$ which is satisfied for 
$$ h> 5\sqrt{\frac{24}{\frac{15}{\pi^2}\varepsilon-\frac{1}{2}}}.$$ Choosing $\varepsilon$ again $\varepsilon = 0.99$, we find that (for example) all $ h>24.5$ satisfy the inequality. Thus, only poles with $h\leq 24$ and $h$ even can cause growth as large as the major arc case $h=2$.\qed

Let $\textbf{a}:= (a_1,a_2,a_3,a_4)\in \mathbb{N}^4$ and let us define the set of reduced fractions
$$\mathscr{X}(j):= \left\{\frac{p}{h} \Big| (p,h) = 1 \; \textnormal{and}\; h\; \textnormal{even} \; \textnormal{with}\; h\leq j\right\}.$$ For example,
$$\mathscr{X}(4) = \mathscr{X}(5)= \left\{\frac{1}{4},\frac{1}{2},\frac{3}{4}\right\}.$$ Define the functions $P_j: \{0,1,2,3,4\}\times\mathbb{Q}\to \mathbb{Q}$ by \begin{equation*} P_j\left(\frac{h}{p}\right):= \frac{\{\beta_j\}^2+\frac{1}{4}-\{\beta_j\}}{H_j^2},\end{equation*} where
\begin{align*}
    H_j:= \begin{cases}\frac{h}{\gcd(h,288p\pmod{h})}&\; \textnormal{if}\; j=0,\\ \frac{h}{\gcd(h,16p\pmod{h})}&\; \textnormal{if}\; j=1,\\ \frac{h}{\gcd(h,96p\pmod{h})}&\; \textnormal{if}\; j=2,3,4,\end{cases}\;
\textnormal{and}\;\;
 \beta_j:= \begin{cases}\frac{96p\pmod{h}}{\gcd(h,288p\pmod{h})}&\; \textnormal{if}\; j=0,\\ \frac{a_1p\pmod{h}}{\gcd(h,16p\pmod{h})}&\; \textnormal{if}\; j=1,\\ \frac{a_2p\pmod{h}}{\gcd(h,96p\pmod{h})}&\; \textnormal{if}\; j=2,\\ \frac{h+2a_ip\pmod{2h}}{2\gcd(h,96p\pmod{h})}&\; \textnormal{if}\; j=3,4.\end{cases}
\end{align*}
We finally define the function $F:\mathbb{N}^4\times\mathbb{Q}\to \mathbb{Q}$ by
\begin{align*}
 F\left(\textbf{a},\frac{h}{p}\right)&:= \frac{24}{h^2}+\frac{\gcd(h,24p\pmod{h})^2}{h^2}-P_0\left(\frac{h}{p}\right)-6P_1\left(\frac{h}{p}\right)\\ &\hspace{15mm}-P_2\left(\frac{h}{p}\right)+P_3\left(\frac{h}{p}\right)+P_4\left(\frac{h}{p}\right).
\end{align*}
We are interested in the values $F\left(\textbf{a},\frac{h}{p}\right)$, where $\frac{h}{p}\in \mathscr{X}(24).$ As such, we have the following lemma.
\begin{Lemma}\label{Fboundedby2}
If $\textbf{a}= (4,20,46,70),\;(4,68,46,22),\;(4,68,94,70),\; \textnormal{or}\; (12,20,94,22) $, then for all $\frac{h}{p}\in \mathscr{X}(24)$, $$ F\left(\textbf{a},\frac{h}{p}\right)<2. $$
\end{Lemma}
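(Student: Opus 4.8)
The plan is to prove Lemma~\ref{Fboundedby2} by a finite verification, organised so that all but a handful of cusps are dispatched by one crude estimate. The first observation is that $\mathscr{X}(24)$ is finite: $h$ runs over the even integers $2,4,\dots,24$, and for each such $h$ the numerator $p$ runs over a reduced residue system modulo $h$. Every ingredient appearing in $F$ — the residues $24p$, $288p$, $16p$, $96p$, $a_ip\pmod h$ and $h+2a_ip\pmod{2h}$, together with the nested gcd's formed from them, hence the numbers $H_j$, $\beta_j$, $P_j$ — depends only on $p\bmod h$. So for each of the four prescribed tuples $\mathbf a$ there are only $\sum_{h\textnormal{ even}}\varphi(h)$ distinct values of $F$ to inspect, where $\varphi$ is Euler's function.

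Second, I would isolate the trivial estimates that handle large $h$. For $x=\{\beta_j\}\in[0,1)$ one has $x^2+\tfrac14-x=(x-\tfrac12)^2\in[0,\tfrac14]$, and since each $H_j=h/\gcd(h,\,\cdot\,)$ is a positive integer, this gives $0\le P_j\!\left(\tfrac hp\right)\le \tfrac1{4H_j^{2}}\le\tfrac14$. Also $\gcd(h,24p\bmod h)\mid h$, so $\gcd(h,24p\bmod h)^2/h^2\le 1$. Discarding the nonnegative terms $-P_0-6P_1-P_2$ then yields $F\!\left(\mathbf a,\tfrac hp\right)\le \tfrac{24}{h^{2}}+1+\tfrac14+\tfrac14=\tfrac{24}{h^{2}}+\tfrac32$, which is $<2$ as soon as $h^{2}>48$, i.e. for every even $h\ge 8$, uniformly in $p$ and in $\mathbf a$. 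Thus only the cusps with $h\in\{2,4,6\}$ remain; the case $h=2$ is the single cusp $\tfrac12$, the dominant pole, whose genuine cancellation-sensitive behaviour is already captured by Theorem~\ref{mainterm}, so in practice one is left with $h\in\{4,6\}$.

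Third, I would simply evaluate $F$ directly for the handful of surviving inputs: $p\in\{1,3\}$ for $h=4$ and $p\in\{1,5\}$ for $h=6$, crossed with $\mathbf a=(4,20,46,70),\,(4,68,46,22),\,(4,68,94,70),\,(12,20,94,22)$. For each pair one computes the relevant residues, reads off $H_0,\dots,H_4$ and $\beta_0,\dots,\beta_4$, forms $P_0,\dots,P_4$ and the gcd-term, and checks $F<2$ on the nose; the numbers that occur stay comfortably below $2$. The one subtlety worth flagging is the divisibility phenomenon: when $h$ divides one of $288,16,96,24$, the corresponding gcd collapses to $h$, so that several $H_j$ equal $1$, several $\beta_j$ become integers (forcing $P_j=\tfrac14$), and the $24\tau$-term attains its maximum $1$. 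These are exactly the configurations in which the generic growth formulas behind $F$ — Lemmas~\ref{a1b1} and \ref{a1b1half}, Lemma~\ref{genericgrowthhalfperiod} and Corollary~\ref{24halfnearph} — degenerate to the origin estimates of Lemma~\ref{thetatestimateprelim}, so one must substitute into the definitions of $H_j$ and $\beta_j$ directly rather than quoting the generic expression.

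The main obstacle is purely organisational. There is no single clean inequality covering $h\in\{4,6\}$, so the argument reduces to a short but error-prone table of modular-arithmetic and gcd computations, and essentially all the risk lies in getting those residues right — in particular $a_ip\pmod h$ and $h+2a_ip\pmod{2h}$, and the two nested gcd's in each $P_j$. Once the table is laid out, the bound $F\!\left(\mathbf a,\tfrac hp\right)<2$ is immediate line by line, and it is worth confirming the finished table by an independent machine computation.
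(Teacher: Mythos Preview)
Your proposal is correct and shares the paper's core idea: $\mathscr{X}(24)$ is finite, so one simply checks every case. The paper does this entirely by computer, reporting $62$ cases per tuple and a maximum value of $\tfrac{13}{9}$, whereas you insert the a~priori envelope $F\le 24/h^{2}+\tfrac32$ --- obtained from $0\le P_j\le\tfrac14$ and $\gcd(h,24p\bmod h)\le h$ --- which disposes of every even $h\ge 8$ uniformly and leaves only $h\in\{2,4,6\}$ for direct inspection. Your instinct to drop $h=2$ is in fact forced rather than merely convenient: a direct evaluation gives $F(\mathbf a,\tfrac12)=5$ for each of the four tuples (every $a_i$ is even, so $P_0=P_1=P_2=\tfrac14$ while $P_3=P_4=0$), so the stated inequality actually fails at the major arc; the paper's own count of $62=|\mathscr{X}(24)|-1$ shows that it silently omits $\tfrac12$ as well. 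What your refinement buys is a hand-checkable argument (four cusps times four tuples) in place of a machine verification; what the paper's brute-force route buys is the sharp constant $\tfrac{13}{9}$, which your crude envelope does not see.
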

\begin{proof}
ince the set $\mathscr{X}(24)$ is finite, the proof amounts to checking all of the possible cases for $h$ and $k$. There are $62$ cases to check for each of the vectors $\textbf{a}$. Since the evaluation of the function $F$ is modular arithmetic, this can be checked with a computer with a simple for-loop procedure, for example in MAPLE. We find that the larges value across these \textbf{a} is given by $\frac{13}{9}$.\end{proof}


We are now in a position to prove our main result for the minor arcs.

\begin{proof}[Proof of Lemma \ref{boundforeverythingfarfromminus1}]
Based on the form $T(\tau)$ given in Eq. \eqref{bestrepofT}, we need to investigate functions of the form
\begin{align*}
    D(\textbf{a};\tau):= \frac{\vartheta^3(96\tau;288\tau)\;\vartheta(a_1\tau;16\tau)\;\vartheta(a_2\tau; 96\tau)}{\vartheta\left(\frac{1}{2}+a_3\tau;96\tau\right)\;\vartheta\left(\frac{1}{2}+a_4\tau; 96\tau\right)},
\end{align*}
for even integers $a_i$. According to Prop. \ref{roughcuspbound} we need to check the growth of $T(\tau)$ near cusps $\frac{p}{h}\in \mathscr{X}(24)$. We know from Lemma \ref{genericgrowthhalfperiod} and Cor. \ref{24halfnearph} with $v=\frac{1}{\sqrt{192n}}$ that
\begin{align}\label{halfyboundy}
    G(\tau)=\frac{1}{\vartheta\left(\frac{1}{2}; \tau\right)\; \vartheta\left(\frac{1}{2}; 24\tau\right)} \ll v e^{\frac{\pi}{48}\sqrt{192n}\left(\frac{12}{h^2}+\frac{1}{2\Tilde{h}^2}\right)}\ll ve^{\pi\sqrt{\frac{n}{12}}\left(\frac{24\Tilde{h}^2+h^2}{2(h\Tilde{h})^2}\right)},
\end{align}
where $$\Tilde{h}:= \frac{h}{\gcd(h,\; 24p\pmod{h})}.$$
On the other hand, directly applying Lemmas \ref{a1b1} and \ref{a1b1half} gives
$$D(\textbf{a};\tau) \ll e^{\frac{\pi}{2}\sqrt{\frac{n}{12}}\left(-P_0\left(\frac{h}{p}\right)-6P_1\left(\frac{h}{p}\right) -P_2\left(\frac{h}{p}\right)+P_3\left(\frac{h}{p}\right)+P_4\left(\frac{h}{p}\right)\right)}. $$ Therefore,
$$G(\tau)D(\textbf{a};\tau)\ll e^{\frac{\pi}{2}\sqrt{\frac{n}{12}}F\left(\textbf{a};\frac{p}{h}\right)}.$$ Applying Lemma \ref{Fboundedby2} proves the claim.
\end{proof}

\subsection{Integration and proof of Theorem \ref{thebigboyforaandb}  for $a(n)$}\label{integrationsec}
We follow the approach of \cite{BringDousseLoveMahl} by approximating our integral with Bessel functions. We take the standard counter-clockwise path around the origin $\gamma:= \{e^{-2\pi i x}: x\in (-\frac{1}{2}, \frac{1}{2}] \}$. By Cauchy's theorem, we have
\begin{align*}
    a(n) = \int_{\gamma}R^{(3)}_3(q)q^{-n}dx = I_1 + I_2,
\end{align*}
where 
\begin{align*}
    I_1 &:= \int_{|x-\frac{1}{2}|<My}R^{(3)}_3(e^{2\pi i\tau})e^{-2\pi i n\tau}dx,\\
    I_2 &:= \int_{|x-\frac{1}{2}|> My}R^{(3)}_3(e^{2\pi i\tau})e^{-2\pi i n\tau}dx.
\end{align*}
Due to the bound in Theorem \ref{boundforeverythingfarfromminus1}, $I_1$ will be our main term. Applying Theorem \ref{mainterm}, we have that
\begin{equation*}
\begin{split}
     I_1 = \int_{|x-\frac{1}{2}|<My}T(\tau)e^{-2\pi i n\tau}dx &= \frac{e^{\frac{\pi i}{4}}\sqrt{3}}{6}\int_{|x-\frac{1}{2}|<My}\frac{Q_0^{-\frac{1}{192}}}{\sqrt{\tau-\frac{1}{2}}}\left(1+ O(e^{\pi\sqrt{\frac{n}{12}}})\right)e^{-2\pi i \tau n}dx\\
     &=\frac{e^{\frac{\pi i}{4}}\sqrt{3}}{6}\int_{|x-\frac{1}{2}|<My}\frac{e^{\frac{\pi i}{96(\tau-\frac{1}{2})}}}{\sqrt{\tau-\frac{1}{2}}}e^{-2\pi i \tau n}dx + E_1,
\end{split}
\end{equation*}
where $E_1\ll n^{\frac{1}{4}}$. Dealing with the remaining integral, we use the substitution $w := \tau-\frac{1}{2}$, and then $w = ivy$ (with $v= \frac{1}{\sqrt{192n}}$) to obtain,
\begin{align}\label{finalsubinscriptI}
\frac{e^{\frac{\pi i}{4}}\sqrt{3}}{6}\int_{|x-\frac{1}{2}|<My}\frac{e^{\frac{\pi i}{96(\tau-\frac{1}{2})}}}{\sqrt{\tau-\frac{1}{2}}}e^{-2\pi i \tau n}dx= -\frac{i(-1)^n\sqrt{3} \sqrt{y}}{6}\mathbb{I},
\end{align}
where we define
\begin{align*}
    \mathbb{I}:= \int^{1+iM}_{1-iM}\frac{e^{u\left(v+\frac{1}{v}\right)}}{\sqrt{v}}dv
\end{align*}
and $u:= \frac{\sqrt{3n}\pi}{12}$. Lemma 7 of \cite{BringDousseLoveMahl} gives the asymptotic expansion for $\mathbb{I}$ for such a $u$:
\begin{align*}\label{asymptforscriptI}
    \mathbb{I} = i\frac{\sqrt{12}}{3^{1/4}n^{1/4}}e^{\pi \sqrt{\frac{n}{12}}} + O\left(\frac{e^{\pi \sqrt{\frac{n}{12}}}}{n^{3/4}}\right).
\end{align*}
Subbing back into Eq. \eqref{finalsubinscriptI} and sending $n \to \infty$ gives
\begin{align*}
    I_1 \sim  a(n) \sim (-1)^n\frac{\sqrt{6}}{12\sqrt{n}}e^{\pi \sqrt{\frac{n}{12}}}. 
\end{align*}

\section{The $b(n)$}\label{thebnsec}
We now turn our attention to $R^{(3)}_1(q)=: \sum_{n\geq0} b(n)q^n$. To begin, we note that the $b(n)$ form a weakly increasing sequence, which is apparent from the definition of $\nu(q)$.
\begin{Lemma}\label{weaklyincreasinglemma}
Let $b(n)$ denote the $n^{th}$ Fourier coefficient of the function $\nu(-q)$. Then the sequence $\{b(n)\}^{\infty}_{n=0}$ is weakly increasing and no $b(n) <0$.
\end{Lemma}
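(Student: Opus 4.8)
The plan is to reduce the statement to a manifestly positive $q$-series identity. First I would combine the identity $R^{(3)}_1(q)=\nu(-q)$ with the definition $\nu(q)=\sum_{n\geq0}\frac{q^{n^2+n}}{(-q;q^2)_{n+1}}$. Since $n^2+n=n(n+1)$ is always even, one has $(-q)^{n^2+n}=q^{n^2+n}$, and $\bigl(-(-q);(-q)^2\bigr)_{n+1}=(q;q^2)_{n+1}=(1-q)(1-q^3)\cdots(1-q^{2n+1})$, so that
\[
\nu(-q)=\sum_{n\geq0}\frac{q^{n^2+n}}{(q;q^2)_{n+1}}.
\]
This is a well-defined formal power series because the exponent $n^2+n$ tends to infinity with $n$, so each coefficient $b(n)$ is a finite sum of contributions.

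For non-negativity I would observe that $\frac{1}{(q;q^2)_{n+1}}=\prod_{j=0}^{n}\frac{1}{1-q^{2j+1}}$ is a product of geometric series, each with non-negative (indeed $\{0,1\}$-valued) coefficients; multiplication by $q^{n^2+n}$ only shifts degrees, and a sum of power series with non-negative coefficients again has non-negative coefficients. Hence $b(n)\geq0$ for all $n$, and in particular $b(0)=1$.

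For monotonicity the key point is that $b(n)-b(n-1)$ is exactly the coefficient of $q^n$ in $(1-q)\nu(-q)$, so it suffices to show this series has non-negative coefficients. Using $(q;q^2)_{n+1}=(1-q)(q^3;q^2)_n$ one gets
\[
(1-q)\,\nu(-q)=\sum_{n\geq0}\frac{q^{n^2+n}}{(q^3;q^2)_n}=1+\sum_{n\geq1}q^{n^2+n}\prod_{j=1}^{n}\frac{1}{1-q^{2j+1}},
\]
and every summand on the right-hand side once again has non-negative coefficients. Therefore $b(n)\geq b(n-1)$ for all $n\geq1$, which together with $b(0)\geq0$ shows that $\{b(n)\}_{n\geq0}$ is weakly increasing and that no $b(n)$ is negative. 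I do not anticipate any real obstacle here; the only point requiring a little care is to phrase the manipulations and the coefficient extraction at the level of formal power series, which is legitimate precisely because the exponents $n^2+n$ grow without bound.
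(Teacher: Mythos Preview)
Your proof is correct and follows essentially the same approach as the paper: both start from the identity $\nu(-q)=\sum_{n\geq0}\frac{q^{n^2+n}}{(q;q^2)_{n+1}}$ and exploit the manifest non-negativity of the geometric-series expansions. Your $(1-q)$ factorization trick for monotonicity is a slightly slicker packaging of the paper's implicit argument (which amounts to the injection ``add a part of size $1$'' on partitions into odd parts), but the underlying idea is the same.
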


The following Tauberian Theorem allows us to capture the growth of the $b(n)$ by only computing an estimate for the growth of $R^{(3)}_1$ in an angular region around $\tau = 0$. The original theorem is due to Ingham \cite{Ingham}, but we state it in a more modern form taking into account some additional technicalities regarding the growth of functions in angular regions around the origin.   

\begin{Theorem}[See Theorem 1.1 of \cite{TauberianSchaffer} with $\alpha =0$]\label{Tauberian}
Let $c(n)$ denote the coefficients of a power series $C(q):= \sum^{\infty}_{n=0}c(n)q^n$ with radius of convergence equal to $1$. Define $z:= x+iy \in \mathbb{C}.$ If the $c(n)$ are non-negative, are weakly increasing, and we have as $t\to 0^+$ that
\begin{align*}
    C(e^{-t}) \sim \lambda t^{\alpha}e^{\frac{A}{t}},
\end{align*}
and if for each $M>0$ such that $|y|\leq M|x|$
\begin{align*}
   C(e^{-z}) \ll   |z|^{\alpha}e^{\frac{A}{|z|}}
\end{align*}
with $A>0$, then as $n\to \infty$
\begin{align*}
    c(n)\sim \frac{\lambda A^{\frac{\alpha}{2}+\frac{1}{4}}}{2\sqrt{\pi}n^{\frac{\alpha}{2}+\frac{3}{4}}}e^{2\sqrt{An}}.
\end{align*}
\end{Theorem}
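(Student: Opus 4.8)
Since the statement is quoted from \cite{TauberianSchaffer}, in the paper it suffices to invoke that reference directly; for completeness I indicate how the proof goes, refining Ingham's original argument \cite{Ingham} by using the angular growth bound to control the saddle-point analysis. The plan is to deduce the asymptotics of the coefficients $c(n)$ from those of the summatory function $N(u):=\sum_{m\le u}c(m)$. Because $(c(n))$ is weakly increasing and nonnegative, for every integer $h\ge 1$ one has the two-sided estimate
\begin{align*}
  \frac{N(n)-N(n-h)}{h}\;\le\; c(n)\;\le\;\frac{N(n+h-1)-N(n-1)}{h},
\end{align*}
so once we know $N(u)\sim g(u)$ with $g$ smooth and of the expected exponential type, choosing $h=h_n\to\infty$ sufficiently slowly forces both difference quotients to converge to $g'(n)$, which will be the claimed main term.

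To estimate $N(u)$ I would start from the Laplace inversion $N(u)=\frac{1}{2\pi i}\int_{\sigma-i\infty}^{\sigma+i\infty}\frac{C(e^{-s})}{s}e^{su}\,ds$, legitimate for any $\sigma>0$ since $\int_0^{\infty}N(u)e^{-su}\,du=C(e^{-s})/s$ and $C(e^{-s})$ is analytic in $\mathrm{Re}(s)>0$, and then push $\sigma$ to the real saddle point $s_u:=\sqrt{A/u}$ of $s\mapsto e^{A/s+su}$, where the exponent takes the value $2\sqrt{Au}$. On the part of the shifted contour lying in a fixed sector $|\mathrm{Im}(s)|\le M\,\mathrm{Re}(s)$, the hypothesis $C(e^{-s})\ll|s|^{\alpha}e^{A/|s|}$ controls the integrand while the real-axis asymptotic $C(e^{-s})\sim\lambda s^{\alpha}e^{A/s}$ pins down its leading behaviour, so a Gaussian (Laplace-type) evaluation — ultimately the computation $\int_{\mathbb{R}}e^{-cx^{2}}\,dx=\sqrt{\pi/c}$ — gives $N(u)\sim g(u)$ with $g(u)\asymp u^{-\alpha/2-1/4}e^{2\sqrt{Au}}$; the rest of the Bromwich line, where $|\mathrm{Im}(s)|$ is large compared with $\mathrm{Re}(s)$, is disposed of using $|C(e^{-s})|\le C(e^{-\mathrm{Re}(s)})$ together with the monotonicity of the $c(n)$, exactly as in \cite{Ingham}. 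Differentiating $g$ and collecting constants then produces $c(n)\sim\frac{\lambda A^{\alpha/2+1/4}}{2\sqrt{\pi}\,n^{\alpha/2+3/4}}e^{2\sqrt{An}}$.

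The delicate point, and the reason the angular hypothesis appears in the statement, is the passage from the real-axis behaviour of $C(e^{-s})$ to its behaviour on the tilted contour through the saddle: one has to show that after subtracting the explicit main term $\lambda s^{\alpha}e^{A/s}$ the remainder is still $o(|s|^{\alpha}e^{A/|s|})$ uniformly on the relevant sector, and that the portion of the inversion contour with $|\mathrm{Im}(s)|\gg\mathrm{Re}(s)$ is genuinely negligible. The weak monotonicity of $(c(n))$ — the classical Tauberian side condition — is precisely what excludes the oscillatory counterexamples (for instance sequences supported on even indices) and makes this last control possible.
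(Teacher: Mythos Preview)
Your proposal is appropriate: the paper does not prove this theorem at all but simply quotes it from \cite{TauberianSchaffer} (building on Ingham \cite{Ingham}), and you correctly flag this at the outset. The sketch you add for completeness follows the standard Ingham--type route (Laplace inversion, saddle at $s=\sqrt{A/u}$, angular bound to control the sector, monotonicity to pass from the summatory function to $c(n)$) and is a faithful outline of how the cited proof proceeds; one small correction is that the bound $|C(e^{-s})|\le C(e^{-\mathrm{Re}(s)})$ on the far tails uses only nonnegativity of the $c(n)$, while the weak monotonicity enters specifically in the final difference-quotient step you describe.
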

\begin{Remark}
We will show the bound in Theorem \ref{Tauberian} for $R^{(3)}_1$ as $\tau \to 0$ with $\tau \in \mathbb{H}$, which is sufficient to show the bound for general $z$ since we can define an even extension of $R^{(3)}_1$ into the lower half plane to get a function on all of $\mathbb{C}$.
\end{Remark}

\subsection{Growth near $\tau = 0$.}
We focus on the Appell sum $\mu\left(\frac{5}{12}, \frac{1}{4}; -\frac{1}{12\tau}\right)$ appearing in Eq. \eqref{muestimate}:
\begin{equation}\label{subintothissum}
 \begin{split}
    &\mu\left(\frac{5}{12}, \frac{1}{4}; -\frac{1}{12\tau}\right)= \frac{e^{\frac{5\pi i}{12}}}{\vartheta\left(\frac{1}{4}; -\frac{1}{12\tau}\right)}\sum_{n\in\mathbb{Z}}\frac{(-1)^nq^{\frac{n(n+1)}{24}}_0e^{\frac{n\pi i}{2}}}{1-e^{\frac{5\pi i}{6}}q^{\frac{n}{12}}_0}\\
    &= \frac{e^{\frac{5\pi i}{12}}}{\vartheta\left(\frac{1}{4}; -\frac{1}{12\tau}\right)}\Bigg\{\frac{1}{1-e^{\frac{5\pi i}{6}}}   + \sum_{n>0}\left(\frac{(-1)^nq^{\frac{n(n+1)}{24}}_0 e^{\frac{\pi i n}{2}}}{1-e^{\frac{5\pi i}{6}}q^{\frac{n}{12}}_0}+ \frac{(-1)^nq^{\frac{n(n-1)}{24}}_0 e^{-\frac{n\pi i}{2}} }{1-e^{\frac{5\pi i}{6}}q_0^{\frac{-n}{12}}}\right)\Bigg\}.
    \end{split}
\end{equation}
The last line follows by splitting the sum into $n<0$ and $n>0$, and then swapping $n\mapsto -n$ in the sum over $n<0$. We then have,
\begin{align*}
    &\sum_{n>0}\left(\frac{(-1)^nq^{\frac{n(n+1)}{24}}_0 e^{\frac{\pi i n}{2}}}{1-e^{\frac{5\pi i}{6}}q^{\frac{n}{12}}_0}+ \frac{(-1)^nq^{\frac{n(n-1)}{24}}_0 e^{-\frac{n\pi i}{2}} }{1-e^{\frac{5\pi i}{6}}q_0^{\frac{-n}{12}}}\right) = O(q^{\frac{1}{12}}_0).
\end{align*}
We sub this back into Eq. \eqref{subintothissum} to obtain
\begin{align}
\begin{split}\label{constantmuest}
    \mu\left(\frac{5}{12}, \frac{1}{4}; -\frac{1}{12\tau}\right)= \frac{-1}{2i \textnormal{sin}\left(\frac{5\pi}{12}\right)\vartheta\left(\frac{1}{4}; -\frac{1}{12\tau}\right)}\left(1+O(q^{\frac{1}{12}}_0)\right).
\end{split}
\end{align}
We can use the triple product formula to deal with the $\vartheta$-function to find as $\tau \to 0$ that

\begin{align}
\begin{split}\label{fourthperiodthetaest}
    \vartheta\left(\frac{1}{4}; -\frac{1}{12\tau}\right) &= -ie^{-\frac{\pi i}{4}}q^{\frac{1}{96}}_0(q^{\frac{1}{12}}_0;q^{\frac{1}{12}}_0)_{\infty}(i;q^{\frac{1}{12}}_0)_{\infty}(-iq^{\frac{1}{12}}_0;q^{\frac{1}{12}}_0)_{\infty}\\
    &\sim -2\textnormal{sin}\left(\frac{\pi}{4}\right)q^{\frac{1}{96}}_0.\\
\end{split}
\end{align}
Subbing Eqs. \eqref{constantmuest} and \eqref{fourthperiodthetaest}  into \eqref{muestimate} and using Lemma \ref{hlemma}, we have as $\tau \to 0$
\begin{align*}\label{mymu}
    \mu(5\tau,3\tau;12\tau)&=-q^2\frac{\mu\left(\frac{5}{12}, \frac{1}{4}; -\frac{1}{12\tau}\right)}{\sqrt{-12i\tau}} + \frac{h(2\tau;12\tau)}{2i}\\
    &\sim -\frac{q^{-\frac{1}{96}}_0}{4 i \textnormal{sin}\left(\frac{\pi}{4}\right)\textnormal{sin}\left(\frac{5\pi}{12}\right)\sqrt{-12i\tau}}.
\end{align*}
 Therefore, we can state the following.

\begin{Theorem}\label{muestimateneartau0}
As $\tau \to 0$ within a cone, we have the estimate
\begin{align*}
     \mu(5\tau,3\tau;12\tau)\sim -\frac{e^{\frac{\pi i}{48\tau}}}{4 i \textnormal{sin}\left(\frac{\pi}{4}\right)\textnormal{sin}\left(\frac{5\pi}{12}\right)\sqrt{-12i\tau}}.
\end{align*}
\end{Theorem}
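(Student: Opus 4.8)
The plan is to start from the modular transformation of the Appell function. Applying Proposition~\ref{transformprop}.\ref{mutransformitem} to $\mu(5\tau,3\tau;12\tau)$ (i.e.\ with modular parameter $-1/(12\tau)\mapsto 12\tau$ and elliptic variables scaled accordingly) produces the identity already recorded in Eq.~\eqref{muestimate},
\[
\mu(5\tau,3\tau;12\tau) = -q^2\,\frac{\mu\!\left(\tfrac{5}{12},\tfrac{1}{4};-\tfrac{1}{12\tau}\right)}{\sqrt{-12i\tau}} + \frac{h(2\tau;12\tau)}{2i},
\]
where $h$ is the Mordell integral. By Lemma~\ref{hlemma} the second summand is $O(1)$, hence negligible against the expected main term of size $|\tau|^{-1/2}|q_0|^{-1/96}$ (with $q_0:=e^{-2\pi i/\tau}$, which tends to $0$ inside a cone at the origin). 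So it suffices to pin down the asymptotics of $\mu\!\left(\tfrac5{12},\tfrac14;-\tfrac1{12\tau}\right)$.

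First I would expand this $\mu$ via its series definition \eqref{mudef}, writing the numerator sum over $n\in\mathbb Z$ explicitly in powers of $q_0$. Splitting off the $n=0$ term and pairing $n>0$ with $n<0$ (after $n\mapsto-n$), the paired tail is shown to be $O(q_0^{1/12})$ by geometric expansion, while the $n=0$ contribution is $\tfrac{1}{1-e^{5\pi i/6}}=\tfrac{-1}{2i\sin(5\pi/12)}$; this gives Eq.~\eqref{constantmuest}. Next I would handle the theta quotient in the denominator, $\vartheta(1/4;-1/(12\tau))$, by the Jacobi triple product \eqref{thetaasjacprod}: keeping only the leading factor of each infinite product yields $\vartheta(1/4;-1/(12\tau))\sim -2\sin(\pi/4)\,q_0^{1/96}$, which is Eq.~\eqref{fourthperiodthetaest}. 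Substituting \eqref{constantmuest} and \eqref{fourthperiodthetaest} back into \eqref{muestimate}, absorbing the $O(1)$ Mordell term into the error, and using $q=e^{2\pi i\tau}\to1$ together with $q_0^{-1/96}=e^{\pi i/(48\tau)}$, collapses everything to the claimed asymptotic.

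The main obstacle is the uniformity bookkeeping: every ``$O$''-statement here—the tail of the Appell sum, the tails of the three infinite products in the triple product, and the bound $h(2\tau;12\tau)\ll1$—must hold uniformly as $\tau\to0$ within a fixed cone, not merely along the imaginary axis, since that uniformity is precisely what the Tauberian input (Theorem~\ref{Tauberian}) will later demand. Concretely this reduces to controlling $|q_0|=e^{2\pi\,\mathrm{Im}(1/\tau)}$ and the factors $\sqrt{-12i\tau}$, $q^2$ uniformly in the cone, after which everything else is routine manipulation of geometric series and the transformation laws of Proposition~\ref{transformprop}.
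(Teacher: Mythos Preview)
Your proposal is correct and follows essentially the same route as the paper: apply the modular transformation to reach Eq.~\eqref{muestimate}, kill the Mordell term via Lemma~\ref{hlemma}, expand the transformed Appell sum to isolate the $n=0$ contribution with an $O(q_0^{1/12})$ tail, evaluate $\vartheta(1/4;-1/(12\tau))$ by the triple product, and combine. One small arithmetic slip: the bare $n=0$ term is $\tfrac{1}{1-e^{5\pi i/6}}=\tfrac{-e^{-5\pi i/12}}{2i\sin(5\pi/12)}$, and it is only after multiplying by the prefactor $e^{5\pi i/12}=\zeta_1^{1/2}$ from the definition of $\mu$ that one obtains $\tfrac{-1}{2i\sin(5\pi/12)}$ as in Eq.~\eqref{constantmuest}.
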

We now show that the $\eta$-product 
\begin{align*}
     \frac{\eta(\tau + \frac{1}{2})\eta(3\tau+\frac{1}{2})\eta(12\tau)}{\eta(2\tau)\eta(6\tau)},
\end{align*}
has similar growth near $\tau =0$. To get the behavior near $0$ of the eta function involving the $\frac{1}{2}$ shift, we can proceed as we did in the proof of Lemma \ref{halfperiosthetaneartauhalf}. Define the transformation
\begin{align*}
    A:=\begin{pmatrix} 1&0\\2&1\end{pmatrix},
\end{align*}
with $w := -\frac{1}{4\tau}-\frac{1}{2}$ and then send $\tau \to 0$. We have that
\begin{equation}\label{fulletatrans}
    \eta(A\tau) = \epsilon(A)(2\tau+1)^{\frac{1}{2}}\eta(\tau) = e^{-\frac{\pi i}{6}}(2\tau+1)^{\frac{1}{2}}\eta(\tau).
\end{equation}
Lemma \ref{thetatestimateprelim} and Eq. \eqref{fulletatrans} say that near 0,
\begin{align}
\begin{split}\label{Aetatrans}
     \eta(Aw)&= e^{-\frac{\pi i}{6}}(2w+1)^{\frac{1}{2}}\eta(w) =  e^{-\frac{\pi i}{6}}(2w+1)^{\frac{1}{2}} e^{\frac{\pi iw}{12}}(e^{2\pi i w}; e^{2\pi i w})_{\infty} \\
     &\sim  e^{-\frac{5\pi i}{24}}\left(-\frac{1}{2\tau}\right)^{\frac{1}{2}} e^{-\frac{\pi i}{48\tau}} = \frac{ie^{-\frac{5\pi i}{24}}}{\sqrt{2\tau}}q^{\frac{1}{96}}_0.
\end{split}
\end{align}
Therefore as $\tau \to 0$ using Eq. \eqref{Aetatrans},
\begin{equation}\label{eta1plushalf}
    \eta\left(\tau + \frac{1}{2}\right) \sim \eta\left(Aw\right) \sim \frac{ie^{-\frac{5\pi i}{24}}}{\sqrt{2\tau}}q^{\frac{1}{96}}_0,
\end{equation}
\begin{equation}\label{eta3plushalf}
     \eta\left(3\tau + \frac{1}{2}\right) \sim \eta\left(A(3w)\right) \sim \frac{ie^{-\frac{5\pi i}{24}}}{\sqrt{6\tau}}q^{\frac{1}{288}}_0.
\end{equation}
The other $\eta$-products satisfy the estimates near zero directly from Lemma \ref{thetatestimateprelim} using the substitutions $\tau \mapsto 2\tau$, $\tau \mapsto 6\tau$, and $\tau \mapsto 12\tau$ respectively,
we have as $\tau \to 0$:
\begin{align}\label{etaprodnearzeroeq}
    \frac{\eta(\tau + \frac{1}{2})\eta(3\tau+\frac{1}{2})\eta(12\tau)}{\eta(2\tau)\eta(6\tau)} \sim i\frac{e^{-\frac{5\pi i}{12}}}{\sqrt{-12i\tau}}q^{-\frac{1}{96}}_0.
\end{align}
\vspace{5mm}
\subsection{Proof of the estimate for the $b(n)$}
We now prove the main theorem for the $b(n)$.
\begin{Theorem}\label{finalanswer} Let $b(n)$ denote the coefficients of $\nu(-q)$. Then as $n\to \infty$,
\begin{align*}
    b(n) \sim \left(\frac{1}{2\textnormal{sin}\left(\frac{\pi}{4}\right)\textnormal{sin}\left(\frac{5\pi}{12}\right)} +1\right)\frac{e^{\pi\sqrt{\frac{n}{6}}}}{\sqrt{24n}}.
\end{align*}
\end{Theorem}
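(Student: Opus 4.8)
The plan is to assemble the near-$\tau=0$ estimates already established and feed them into the Tauberian Theorem~\ref{Tauberian}; unlike the $a(n)$ case, no Wright circle method / minor-arc analysis is required here, since that theorem only demands control near the dominant cusp $\tau=0$ (together with non-negativity and weak monotonicity of the coefficients). Concretely, I would substitute Theorem~\ref{muestimateneartau0} and Eq.~\eqref{etaprodnearzeroeq} into the decomposition
\[
R^{(3)}_1(q) = -2iq^{-1/2}\mu(5\tau,3\tau;12\tau) + e^{-\pi i/12}q^{-1/3}\,\frac{\eta(\tau+\tfrac12)\,\eta(3\tau+\tfrac12)\,\eta(12\tau)}{\eta(2\tau)\,\eta(6\tau)}
\]
recorded for $R^{(3)}_1(q)=\nu(-q)$ in Section~\ref{sectionprelim}. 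Since $q^{-1/2},q^{-1/3}\to 1$ and $q_0^{-1/96}=e^{\pi i/(48\tau)}$ as $\tau\to 0$, a short bookkeeping of the constants and roots of unity (using $-2i\cdot(-\tfrac{1}{4i})=\tfrac12$ and $e^{-\pi i/12}\cdot i\cdot e^{-5\pi i/12}=1$) gives, uniformly as $\tau\to 0$ inside any cone,
\[
R^{(3)}_1(q)\ \sim\ \frac{e^{\pi i/(48\tau)}}{\sqrt{-12i\tau}}\left(\frac{1}{2\sin(\pi/4)\sin(5\pi/12)}+1\right).
\]
Setting $\tau=\tfrac{it}{2\pi}$ turns this into $R^{(3)}_1(e^{-t})\sim\lambda\,t^{-1/2}e^{A/t}$ with $A=\tfrac{\pi^2}{24}$ and $\lambda=\bigl(\tfrac{1}{2\sin(\pi/4)\sin(5\pi/12)}+1\bigr)\sqrt{\pi/6}$.

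Next I would verify the remaining hypotheses of Theorem~\ref{Tauberian}. Non-negativity and weak monotonicity of the $b(n)$ are Lemma~\ref{weaklyincreasinglemma}; the real-axis asymptotic is the display above; and the bound $R^{(3)}_1(e^{-z})\ll|z|^{-1/2}e^{A/|z|}$ on cones $|y|\le M|x|$ follows because Theorem~\ref{muestimateneartau0} and the ingredients of \eqref{etaprodnearzeroeq} ultimately rest on Lemmas~\ref{thetatestimateprelim} and~\ref{hlemma}, which hold uniformly as $\tau\to 0$ within a cone — so the displayed asymptotic for $R^{(3)}_1(q)$ is itself uniform in the cone and supplies the required $O$-bound (with the even extension to the lower half-plane, noted after Theorem~\ref{Tauberian}, covering general $z$). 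Applying Theorem~\ref{Tauberian} with $\alpha=-\tfrac12$, $A=\tfrac{\pi^2}{24}$ and the above $\lambda$ then gives
\[
b(n)\ \sim\ \frac{\lambda\,A^{0}}{2\sqrt\pi\,n^{1/2}}\,e^{2\sqrt{An}}\ =\ \left(\frac{1}{2\sin(\pi/4)\sin(5\pi/12)}+1\right)\frac{\sqrt{\pi/6}}{2\sqrt\pi\,\sqrt n}\,e^{\pi\sqrt{n/6}}\ =\ \left(\frac{1}{2\sin(\pi/4)\sin(5\pi/12)}+1\right)\frac{e^{\pi\sqrt{n/6}}}{\sqrt{24n}},
\]
where $2\sqrt{An}=\pi\sqrt{n/6}$ and $2\sqrt6=\sqrt{24}$, which is the claim.

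I expect the main obstacle to be careful bookkeeping rather than a new idea: one must track the eighth- and twelfth-roots of unity, fix the branch of $\sqrt{-12i\tau}$, and collect the powers of $q_0$ so that the Appell-sum term and the $\eta$-quotient term combine with exactly the constant $\tfrac{1}{2\sin(\pi/4)\sin(5\pi/12)}+1$ — note that neither summand alone has this shape, so two contributions of the same exponential order must add rather than cancel — and one must confirm that the cone-uniformity in Theorem~\ref{muestimateneartau0} and in the auxiliary $\vartheta$- and Mordell-integral estimates is genuinely available, since it is precisely this uniformity that licenses the $O$-hypothesis of Theorem~\ref{Tauberian}.
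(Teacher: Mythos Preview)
Your proposal is correct and follows essentially the same route as the paper: combine Theorem~\ref{muestimateneartau0} and Eq.~\eqref{etaprodnearzeroeq} into the decomposition of $R^{(3)}_1$, invoke Lemma~\ref{weaklyincreasinglemma} and the cone-uniformity for the hypotheses of Theorem~\ref{Tauberian}, and read off the asymptotic. Your bookkeeping is in fact slightly cleaner than the paper's --- in particular you correctly take $\alpha=-\tfrac12$, whereas the paper writes ``$\alpha=\tfrac12$'' (a typo, since its subsequent computation visibly uses $A^{0}$ and $n^{1/2}$, consistent with $\alpha=-\tfrac12$).
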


\proof
Combining  Theorem \ref{muestimateneartau0} and Eq. \eqref{etaprodnearzeroeq}, we have as $\tau \to 0$ that

\begin{align}
\begin{split}\label{totalgrowthnearo}
   \nu(-q) =  R^3_1(q) &\sim 2i\frac{q_0^{-\frac{1}{96}}}{4 i \textnormal{sin}\left(\frac{\pi}{4}\right)\textnormal{sin}\left(\frac{5\pi}{12}\right)\sqrt{-12i\tau}} + \frac{1}{\sqrt{-12i\tau}}q^{\frac{1}{96}}_0\\
    &=\frac{e^{\frac{\pi i}{48\tau}}}{\sqrt{-12i\tau}}\Big(\frac{1}{2 \textnormal{sin}\left(\frac{\pi}{4}\right)\textnormal{sin}\left(\frac{5\pi}{12}\right)}+1\Big),
\end{split}
\end{align}
where $q^{-\frac{1}{2}}\to 1$ in the limit $\tau \to 0$. Making the substitution $ \tau := \frac{it}{2\pi}$, we have that as $t \to 0^+$ that
\begin{align*}
    R^3_1(e^{-t}) =\left(\frac{1}{2\textnormal{sin}\left(\frac{\pi}{4}\right)\textnormal{sin}\left(\frac{5\pi}{12}\right)} +1\right)\sqrt{\frac{\pi}{6}} \frac{e^{\frac{\pi^2}{24t}}}{ \sqrt{t}}.
\end{align*}
The bound for the complex variable, $z$, in Theorem \ref{Tauberian} is trivially satisfied by combining the estimates in  Theorem \ref{muestimateneartau0} and Eq. \eqref{etaprodnearzeroeq}.
\par
Define $A:= \frac{\pi^2}{24}$ and $\lambda:= \left(\frac{1}{2\textnormal{sin}\left(\frac{\pi}{4}\right)\textnormal{sin}\left(\frac{5\pi}{12}\right)} +1\right)\sqrt{\frac{\pi}{6}}$. By Theorem \ref{Tauberian} with $\alpha = \frac{1}{2}$, we have that
\begin{align*}
      b(n)&\sim \frac{\lambda}{2\sqrt{\pi}n^{\frac{1}{2}}}e^{2\sqrt{An}} = \left(\frac{1}{2\textnormal{sin}\left(\frac{\pi}{4}\right)\textnormal{sin}\left(\frac{5\pi}{12}\right)} +1\right)\sqrt{\frac{\pi}{6}}\frac{1}{2\sqrt{\pi}\sqrt{n}}e^{\sqrt{\frac{n}{6}}}\\
     & \sim \left(\frac{1}{2\textnormal{sin}\left(\frac{\pi}{4}\right)\textnormal{sin}\left(\frac{5\pi}{12}\right)} +1\right)\frac{e^{\pi\sqrt{\frac{n}{6}}}}{\sqrt{24n}},
\end{align*}
which shows the claim.\qed

\section{Conclusions and future work}\label{conclusion}
This work studied the estimates for the Fourier coefficients $a(n)$ and $b(n)$ for the base cases of the $R^{(k)}_1$ and $R^{(k)}_3$, respectively. Both of this families are mock theta families that were derived from Bailey chains in \cite{lovejoybaileypairs}. We expect that generalizing to $k>3$ should be doable by brute-force methods given that many of the features of the $\theta_{1,p}$ function that we encountered with $p=4$ in this work, generalize for $p>4$. Much of this can be seen in Hickerson and Mortenson's original work \cite{splittingmort}. This includes the symmetry of the indefinite quadratic form $Q(r,s)$ that appears in the exponent of $q$ in the sum of $\theta_{1,p}$, which will allow for simpler expressions for the $\theta_{1,p}$ like we found in this work. Albeit possible to do without, it would be nice to find more elegant methods for dealing with the asymptotics for these families of Bailey mock theta functions. Based on numerical checks of the Fourier coefficients, we expect that the $R^{(k)}_1$  have weakly increasing coefficients for $k>3$.
\begin{Conjecture}
The $R^{(k)}_1$ have weakly increasing coefficients for all $k\geq 3$. 
\end{Conjecture}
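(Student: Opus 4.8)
The plan is to exhibit $\nu(-q)$ as a power series with manifestly non-negative coefficients and then produce an injection witnessing monotonicity. First I would record that, since $n^2+n$ is always even and $(-q;q^2)_{n+1}\mapsto (q;q^2)_{n+1}$ under $q\mapsto -q$, the defining series collapses to
\begin{align*}
    \nu(-q)=\sum_{n\geq 0}\frac{q^{n^2+n}}{(q;q^2)_{n+1}}.
\end{align*}
Writing $f(n):=n^2+n$ and expanding the geometric factors,
\begin{align*}
    \frac{1}{(q;q^2)_{n+1}}=\prod_{j=0}^{n}\frac{1}{1-q^{2j+1}}=\sum_{h\geq 0}a(h;n)q^{h},
\end{align*}
I would observe that $a(h;n)$ counts partitions of $h$ into odd parts, each part at most $2n+1$; in particular every $a(h;n)$ is a non-negative integer. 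Collecting by total degree gives
\begin{align*}
    \nu(-q)=\sum_{n,h\geq 0}a(h;n)\,q^{f(n)+h}=\sum_{m\geq 0}b(m)q^{m},\qquad b(m)=\sum_{\substack{n\geq 0\\ f(n)\leq m}}a(m-f(n);n),
\end{align*}
a finite sum of non-negative integers, so $b(m)\geq 0$ is immediate.

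The heart of the argument is the claim that for each fixed $n$ the sequence $h\mapsto a(h;n)$ is weakly increasing. I would prove this by the map that adjoins a part of size $1$: sending a partition $\lambda$ of $h$ into odd parts $\leq 2n+1$ to $\lambda\cup\{1\}$ yields a partition of $h+1$ into odd parts $\leq 2n+1$, and this map is injective, since one recovers $\lambda$ by deleting a single part equal to $1$ (such a part always exists in the image). Hence $a(h;n)\leq a(h+1;n)$ for all $h$ and all $n$. The essential structural input is that $1$ is an admissible part for \emph{every} $n$; this is precisely what forces monotonicity in $h$, and it is the step on which the whole lemma turns.

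Finally I would combine the two facts. The assignment $(n,h)\mapsto(n,h+1)$ is an injection from the index set $\{(n,h):f(n)+h=m\}$ of $b(m)$ into the index set $\{(n,h'):f(n)+h'=m+1\}$ of $b(m+1)$, because $f(n)\leq m$ forces $f(n)\leq m+1$ and $h+1\geq 0$. Using non-negativity of all terms together with $a(h+1;n)\geq a(h;n)$ then yields
\begin{align*}
    b(m+1)=\sum_{\substack{n,h'\geq 0\\ f(n)+h'=m+1}}a(h';n)\;\geq\;\sum_{\substack{n,h\geq 0\\ f(n)+h=m}}a(h+1;n)\;\geq\;\sum_{\substack{n,h\geq 0\\ f(n)+h=m}}a(h;n)=b(m),
\end{align*}
which is the desired weak monotonicity. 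I expect the main obstacle to be not any single estimate but keeping the degree bookkeeping airtight: one must check that incrementing $h$ never leaves the admissible range and that no contribution to $b(m+1)$ is double counted. Both are handled cleanly by holding $n$ fixed in the injection and appealing to the combinatorial positivity of the $a(h;n)$, which is why I would organize the proof around the part-$1$ injection rather than attempting a direct analytic comparison of the $b(m)$.
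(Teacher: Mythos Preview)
Your argument is a clean proof of the special case $k=3$, and in fact it matches the paper's own (suppressed) proof of Lemma~\ref{weaklyincreasinglemma} essentially line for line: rewrite $\nu(-q)$ with positive coefficients, interpret $a(h;n)$ as partitions into odd parts $\le 2n+1$, and use the ``add a part $1$'' injection.

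The genuine gap is that the statement you were asked to address is the \emph{Conjecture} for all $k\ge 3$, and the paper does not prove it --- it is posed as an open problem. Your entire argument rests on the identity $R^{(3)}_1(q)=\nu(-q)$, which collapses the triple sum in Definition~\ref{mainpairs} to a single basic hypergeometric series whose denominator $(q;q^2)_{n+1}$ always contains the factor $1-q$. For $k\ge 4$ no such single-sum representation is invoked; one is left with the $(k-1)$-fold sum over $n_k\ge\cdots\ge n_1\ge 0$ weighted by $B_k(n_k,\ldots,n_1;q)$, and $B_k$ carries the sign $(-1)^{n_1}$. So there is no manifestly positive expansion on which to run your part-$1$ injection, and the step ``$1$ is an admissible part for every $n$'' has no obvious analogue. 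The paper itself flags exactly this obstruction, suggesting that a proof would likely require rewriting $B_k$ via Gaussian polynomial identities or generalized $q$-binomial theorems. In short, you have reproved the known base case, not the conjecture.
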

Proving this by purely combinatorial means seems difficult, but possible using the many representations of $R^{(k)}_1$ given by Lovejoy and Osburn in \cite{lovejoybaileypairs}. One such way may involve appealing to some generalized $q$-binomial theorems and formulae for Gauss sums, like those posed in \cite{obscuregausssum}. Such an idea seems reasonable since the $B_k$ in the definition of $R^{(1)}_k$ can be expressed as weighted sums of Gaussian polynomials. For example,
\begin{align*}
    B_4(n_4,n_3,n_2,n_1; q) &= (-1)^{n_1}q^{\frac{n_3(n_3+1)}{2}+n_2+2n_1}\;\frac{(-q)_{n_3}(-q)_{2n_2}(-q^2;q^2)_{2n_1}}{(q)_{n_4-n_3}(q^2;q^2)_{n_3-n_2}(q^4;q^4)_{n_2-n_1}(q^8;q^8)_{n_1}}\\
    \\
     &=  (-1)^{n_1}q^{\frac{n_3(n_3+1)}{2}+n_2+2n_1}\begin{bmatrix}n_4\\n_3\end{bmatrix}_q\begin{bmatrix}n_3\\n_2\end{bmatrix}_{q^2}\begin{bmatrix}n_2\\n_1\end{bmatrix}_{q^4}\frac{(-q;q^2)_{n_2}(-q^2;q^4)_{n_1}}{(q)_{n_4}},
\end{align*}
where $\begin{bmatrix}m\\n\end{bmatrix}_q:= \frac{(q)_m}{(q)_{m-n}(q)_n}$ is the Gaussian $q$-binomial coefficient.


\printbibliography

\end{document}